\newtheorem{mainthm}{Theorem}
\newtheorem*{mainconj}{Poset Curvature Conjecture}
\newtheorem{thm}{Theorem}[section]\newtheorem{lem}[thm]{Lemma}
\newtheorem{cor}[thm]{Corollary}\newtheorem{conj}[thm]{Conjecture}
\newtheorem{prop}[thm]{Proposition}
\theoremstyle{definition}\newtheorem{defn}[thm]{Definition}  
\newtheorem{rem}[thm]{Remark}\newtheorem{exmp}[thm]{Example}
\newcommand{\zero}{\mathbf{0}}\newcommand{\one}{\mathbf{1}} 
\newcommand{\mbb}[1]{\ensuremath{\mathbb{#1}}}
\newcommand{\N}{\mbb{N}}\newcommand{\Z}{\mbb{Z}} 
\newcommand{\R}{\mbb{R}}\newcommand{\F}{\mbb{F}}
\newcommand{\sph}{\mbb{S}} 
\newcommand{\tsc}[1]{\text{\textsc{#1}}}
\newcommand{\cat}{\tsc{CAT}}
\newcommand{\lk}{\tsc{lk}}
\newcommand{\sfw}[1]{\textsf{#1}\xspace}
\newcommand{\pe}{\sfw{PE}}\newcommand{\ps}{\sfw{PS}}
\newcommand{\sym}{\sfw{Sym}}\newcommand{\ortho}{\sfw{Ortho}} 
\newcommand{\script}[1]{\ensuremath{\mathcal{#1}}}
\newcommand{\order}[1]{\ensuremath{\lvert #1 \rvert}}
\newcommand{\bt}{\begin{tabular}}\newcommand{\et}{\end{tabular}}
\begin{document}

\title{Braids, posets and orthoschemes}

\author[T.~Brady]{Tom Brady}
  \address{Mathematics\\
    Dublin City University\\
    Glasnevin, Dublin 9\\
    Ireland}
  \email{tom.brady@dcu.ie}

\author[J.~McCammond]{Jon McCammond}
  \address{Mathematics\\ 
    University of California\\ 
    Santa Barbara, CA 93106\\
    USA}
  \email{jon.mccammond@math.ucsb.edu} 
\date{\today}

\begin{abstract}
  In this article we study the curvature properties of the order
  complex of a graded poset under a metric that we call the
  ``orthoscheme metric''.  In addition to other results, we
  characterize which rank~$4$ posets have $\cat(0)$ orthoscheme
  complexes and by applying this theorem to standard posets and
  complexes associated with four-generator Artin groups, we are able
  to show that the $5$-string braid group is the fundamental group of
  a compact nonpositively curved space.
\end{abstract}

\maketitle

Barycentric subdivision subdivides an $n$-cube into isometric metric
simplices called \emph{orthoschemes}.  We use orthoschemes to turn the
order complex of a graded poset $P$ into a piecewise Euclidean complex
$K$ that we call its \emph{orthoscheme complex}.  Our goal is to
investigate the way that combinatorial properties of $P$ interact with
curvature properties of $K$.  More specifically, we focus on
combinatorial configurations in $P$ that we call \emph{spindles} and
conjecture that they are the only obstructions to $K$ being $\cat(0)$.

\begin{mainconj}\label{conj:poset}
  The orthoscheme complex of a bounded graded poset $P$ is $\cat(0)$
  iff $P$ has no short spindles.
\end{mainconj}

One way to view this conjecture is as an attempt to extend to a
broader context the flag condition that tests whether a cube complex
is $\cat(0)$.  We highlight this perspective in \S\ref{sec:low}.
Our main theorem establishes the conjecture for posets of low rank.

\begin{mainthm}\label{main:poset}
  The orthoscheme complex of a bounded graded poset $P$ of rank at
  most $4$ is $\cat(0)$ iff $P$ has no short spindles.
\end{mainthm}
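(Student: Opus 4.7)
The plan is to combine Gromov's link condition with an inductive reduction along rank. Because $P$ is bounded, the elements $\hat 0$ and $\hat 1$ cone off the order complex, so the orthoscheme complex is contractible and in particular simply connected. Hence the $\cat(0)$ property reduces to the local condition that the link of every vertex be $\cat(1)$. The vertices of the orthoscheme complex are precisely the elements $x\in P$, and by the orthogonality built into the orthoscheme shape the link at $x$ splits naturally as the spherical join of a piece coming from the interval $[\hat 0,x]$ and a piece coming from the interval $[x,\hat 1]$. Each piece is itself a spherical orthoscheme-type complex associated to a lower-rank interval, which sets up the inductive bookkeeping.

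I would first dispose of the cases of rank at most $3$. For rank $\le 2$ the orthoscheme complex is at most two-dimensional and the link at each vertex is a graph; short spindles correspond exactly to short embedded cycles in these graphs, and the equivalence follows from the standard girth criterion for a one-dimensional piecewise spherical complex to be $\cat(1)$. For rank $3$, the link at an interior vertex is a two-dimensional spherical complex that arises as a spherical join of a graph with a lower-dimensional factor, and by the spherical join theorem the $\cat(1)$ check again reduces to girth bounds on graph factors, which translate to the absence of short spindles in the appropriate subinterval of $P$.

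The essential new content is the analysis, in rank $4$, of the link at an interior vertex $x$ of rank $2$. Here the link is the spherical join of two graphs, one built from the rank-$2$ interval $[\hat 0,x]$ and one from the rank-$2$ interval $[x,\hat 1]$, giving a three-dimensional piecewise spherical complex. By the spherical join theorem this join is $\cat(1)$ iff both factor graphs are, i.e.\ contain no embedded closed geodesic of metric length less than $2\pi$. I expect the heart of the argument to be a careful dictionary between short embedded cycles in these factor graphs and short spindles in the corresponding rank-$2$ subintervals of $P$: a short cycle traces out two distinct saturated chains between a common top and bottom element, which is exactly the structural picture of a spindle, and the orthoscheme angle arithmetic should match the $2\pi$ threshold for $\cat(1)$ with the combinatorial threshold in the definition of \emph{short} spindle. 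The main obstacle will be executing this angle computation uniformly and checking that no subtler geodesic phenomena in the three-dimensional link escape detection by the graph factors; one must also treat the mildly degenerate cases where $x$ has rank $1$ or $3$, but there one factor of the join is lower-dimensional and the analysis is simpler.

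For the reverse direction, a short spindle in $P$ produces, directly by construction, a short embedded closed geodesic in the link at the element that sits between the top and bottom of the spindle, exhibiting an explicit obstruction to $\cat(1)$ locally and hence to $\cat(0)$ globally. Assembling the four rank strata via the join-induction described above then yields the full equivalence through rank $4$.
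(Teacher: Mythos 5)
There is a genuine gap, and it comes from misidentifying where the obstruction lives. Your reduction to vertex links is fine in principle, but the join decomposition at a vertex $v_x$ has as its factors the \emph{endpoint} links of the intervals $P_{\zero x}$ and $P_{x\one}$ (built from the $\beta$ shapes), not diagonal links. For a rank~$2$ element $x$ in a rank~$4$ poset these factors are the order complexes of $P_{\zero x}\setminus\{x\}$ and $P_{x\one}\setminus\{x\}$, i.e.\ star graphs (metric trees): they contain no cycles at all, so the ``girth of the factor graphs'' detects nothing, and the step you call the heart of the argument is vacuous. Worse, at the vertices $v_\zero$ and $v_\one$ the link is the full endpoint link of $P$, a $3$-dimensional piecewise spherical complex built from $\beta_4$ simplices that does not split as a join, so your join-induction gives no way to certify it is $\cat(1)$. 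The paper avoids ever proving endpoint links are $\cat(1)$ directly: it uses Bowditch's characterization of ``not quite $\cat(1)$'' spaces together with the fact that endpoint links admit a monotonic (hemispheric) contraction, so any failure of $\cat(0)$ must be witnessed by a \emph{local diagonal link}, i.e.\ the link of an edge $e_{xy}$, metrized by $\alpha$ shapes.

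That reduction exposes the real difficulty, which your proposal never confronts: for a rank~$4$ interval the diagonal link is a genuinely $2$-dimensional complex of $A_3$ spherical triangles (not a join of graphs), and a short closed geodesic there need not lie in the $1$-skeleton, so there is no direct dictionary with spindles. The paper's Theorem on restricting to the $1$-skeleton handles this by invoking the Elder--McCammond classification of the finitely many annular, m\"obius and necklace galleries that can carry a short geodesic in a complex of $A_3$ shapes, ruling out the non-necklace types by a rank-labelling argument, and pushing the beads of type $C$, $D$, $E$ into the $1$-skeleton by length-preserving homotopies; only then does the correspondence between $1$-skeleton geodesics and spindles (via the ``complements are far apart in rank at most $3$'' lemma, itself a nontrivial low-rank fact that is only conjectural in general) finish both directions. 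Your proposal contains no substitute for the gallery analysis, and the angle bookkeeping you defer is exactly the part that cannot be reduced to girth bounds on graphs.
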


Using Theorem~\ref{main:poset}, we prove that the $5$-string braid
group, also known as the Artin group of type $A_4$, is a $\cat(0)$
group.  More precisely, we prove the following.

\begin{mainthm}\label{main:artin}
  Let $K$ be the Eilenberg-MacLane space for a four-generator Artin
  group of finite type built from the corresponding poset of
  $W$-noncrossing partitions and endowed with the orthoscheme metric.
  When the group is of type $A_4$ or $B_4$, the complex $K$ is
  $\cat(0)$ and the group is a $\cat(0)$ group.  When the group is of
  type $D_4$, $F_4$ or $H_4$, the complex $K$ is not $\cat(0)$.
\end{mainthm}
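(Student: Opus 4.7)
The plan is to apply Theorem~\ref{main:poset} to the poset of $W$-noncrossing partitions, which I will denote $NC(W)$, for each of the five finite-type four-generator Coxeter groups. For each such $W$, the poset $NC(W)$ is bounded and graded of rank exactly $4$, and its orthoscheme complex is precisely the compact complex $K$ appearing in the statement. Prior work (Brady and Brady--Watt for types $A$ and $B$, Bessis in general) establishes that this complex is an Eilenberg--MacLane space for the corresponding Artin group, so the passage from ``$K$ is $\cat(0)$'' to ``the Artin group is a $\cat(0)$ group'' is immediate via Cartan--Hadamard applied to the universal cover. The entire theorem therefore reduces, via Theorem~\ref{main:poset}, to deciding for each type whether $NC(W)$ contains a short spindle.

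The heart of the argument is then a type-by-type examination of the five noncrossing partition lattices. For types $A_4$ and $B_4$ I would verify that no short spindle exists, while for types $D_4$, $F_4$, and $H_4$ I would exhibit an explicit short spindle. In the positive cases, the symmetry afforded by the Coxeter-element rotation together with the full $W$-action reduces the potential configurations to a small number of orbit representatives; in the $A_4$ case the combinatorial model of noncrossing partitions of a pentagon is particularly amenable to inspection, and the $B_4$ case is handled similarly using centrally symmetric noncrossing partitions of an octagon. In the negative cases, one searches for pairs of rank-$2$ elements whose joint interval structure matches the definition of a short spindle; the richer reflection structure of $D_4$, $F_4$, and $H_4$ makes such configurations present, and I would expect natural candidates to arise from the triality of $D_4$, the short/long root dichotomy of $F_4$, and the golden-ratio dihedral subsystems of $H_4$.

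The main obstacle is the positive direction for $A_4$ and $B_4$, where one must certify that no short spindle has been overlooked. Even after the above symmetry reductions the number of candidate configurations is appreciable, so the cleanest way to complete the verification is a finite enumeration, most conveniently performed by computer, checking the short-spindle condition directly on every relevant pair of elements of $NC(W)$. Once this combinatorial audit is in place, Theorem~\ref{main:poset} gives non-positive curvature of $K$, and the $K(\pi,1)$ identification recalled in the first paragraph then promotes this to the full conclusion that the Artin groups of types $A_4$ and $B_4$ are $\cat(0)$ groups while those of types $D_4$, $F_4$, and $H_4$ fail the orthoscheme $\cat(0)$ test with this particular complex.
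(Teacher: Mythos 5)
Your overall architecture matches the paper's: apply Theorem~\ref{main:poset} to the five rank-$4$ lattices $NC_W$, reduce the spindle check (via the known lattice property, Proposition~\ref{prop:short-spindles} and self-duality) to a small finite search --- done by inspection of noncrossing pentagon partitions for $A_4$ and by a computer (GAP) enumeration in general --- and then invoke the $K(\pi,1)$ identification. But there is one genuine gap: you assert that the orthoscheme complex of $NC_W$ ``is precisely the compact complex $K$ appearing in the statement.'' It is not. The orthoscheme complex of a bounded poset is contractible (Proposition~\ref{prop:contractible}), so it cannot be an Eilenberg--MacLane space for a nontrivial Artin group; the space $K$ is obtained from the orthoscheme complex of $NC_W$ by identifying faces pairwise by isometries, yielding a one-vertex complex whose fundamental group is the Artin group. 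Consequently ``the orthoscheme complex of $NC_W$ is $\cat(0)$'' does not by itself say anything about the curvature of $K$, and your Cartan--Hadamard step is being applied to the wrong space.

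The missing bridge is exactly Proposition~\ref{prop:part-artin} together with the column analysis of Definition~\ref{def:columns}: under the orthoscheme metric $K$ splits metrically as $Y\times S^1$, where $Y$ is built from $\widetilde A_3$ shapes and the link of the unique vertex of $Y$ is isometric to the diagonal link of the orthoscheme complex of $NC_W$. This is what transfers curvature in both directions. If the orthoscheme complex of $NC_W$ is $\cat(0)$, its diagonal link is $\cat(1)$, hence the vertex link of $K$ (a spherical join of that diagonal link with $\sph^0$) is $\cat(1)$, $K$ is nonpositively curved, and the Artin group is a $\cat(0)$ group. Conversely, for $D_4$, $F_4$, $H_4$ the girth-$6$ spindle found by the search gives a short closed geodesic in the diagonal link (Theorem~\ref{thm:low-spindle}), and because that link occurs as a join factor of the vertex link of $K$ itself, it is $K$ --- not merely the orthoscheme complex of $NC_W$ --- that fails to be nonpositively curved, which is what the theorem actually claims. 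With this identification supplied, the remainder of your proposal is essentially the paper's proof; without it, your argument establishes the curvature of the wrong complex.
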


The article is structured as follows.  The initial sections recall
basic results about posets, complexes and curvature, followed by
sections establishing the key properties of orthoschemes, orthoscheme
complexes and spindles.  The final sections prove our main results and
contain some concluding remarks.

\section{Posets}\label{sec:poset}

We begin with elementary definitions and results about posets.  For
additional background see \cite{Bj95} or \cite{EC1}.

\begin{defn}[Poset]
  A \emph{poset} is a set with a fixed implicit reflexive,
  anti-symmetric and transitive relation $\leq$.  A \emph{chain} is
  any totally ordered subset, subsets of chains are \emph{subchains}
  and a \emph{maximal chain} is one that is not a proper subchain of
  any other chain.  A chain with $n+1$ elements has \emph{length}~$n$
  and its elements can be labeled so that $x_0 < x_1 < \cdots < x_n$.
  A poset is \emph{bounded below}, \emph{bounded above}, or
  \emph{bounded} if it has a minimum element $\zero$, a maximum
  element $\one$, or both.  The elements $\zero$ and $\one$ are
  necessarily unique when they exist.  A poset has \emph{rank $n$} if
  it is bounded, every chain is a subchain of a maximal chain and all
  maximal chains have length~$n$.
\end{defn}

\begin{defn}[Interval]
  For $x\leq y$ in a poset $P$, the \emph{interval} between $x$ and
  $y$ is the restriction of the poset to those elements $z$ with $x
  \leq z \leq y$ and it is denoted by $P(x,y)$ or $P_{xy}$.  If every
  interval in $P$ has a rank, then $P$ is \emph{graded}.  Let $x$ be
  an element in a graded poset $P$.  When $P$ is bounded below, the
  \emph{rank of $x$} is the rank of the interval $P_{\zero x}$ and
  when $P$ is bounded above, the \emph{corank of $x$} is the rank of
  the interval $P_{x \one}$.  In general, if every interval in a poset
  $P$ has a particular property, we say $P$ \emph{locally} has that
  property.
\end{defn}

Note that a poset is bounded and graded iff it has rank~$n$ for some
$n$, and that the rank of an element $x$ in a bounded graded poset is
the same as the subscript $x$ receives when it is viewed as an element
of a maximal chain from $\zero$ to $\one$ whose elements are labelled
as described above.

\begin{defn}[Lattice]
  Let $x$ and $y$ be elements in a poset $P$.  An \emph{upper bound}
  for $x$ and $y$ is any element $z$ such that $x \leq z$ and $y \leq
  z$.  The minimal elements among the collection of upper bounds for
  $x$ and $y$ are called \emph{minimal upper bounds} of $x$ and $y$.
  When only one minimal upper bound exists, it is the \emph{join} of
  $x$ and $y$ and denoted $x \vee y$.  The definitions of \emph{lower
  bounds} and \emph{maximal lower bounds} of $x$ and $y$ are similar.
  When only one maximal lower bound exists, it is the \emph{meet} of
  $x$ and $y$ and denoted $x \wedge y$.  A poset in which $x \vee y$
  and $x \wedge y$ always exist is called a \emph{lattice}.
\end{defn}

For later use we define a particular configuration that is present in
every bounded graded poset that is not a lattice.

\begin{defn}[Bowtie]\label{def:bowtie}
  We say that a poset $P$ contains a \emph{bowtie} if there exist
  distinct elements $a$, $b$, $c$ and $d$ such that $a$ and $c$ are
  minimal upper bounds for $b$ and $d$ and $b$ and $d$ are maximal
  lower bounds for $a$ and $c$.  In particular, there is a zig-zag
  path from $a$ down to $b$ up to $c$ down to $d$ and back up to $a$.
  An example is shown in Figure~\ref{fig:non-lattice}.
\end{defn}

\begin{figure}
\includegraphics{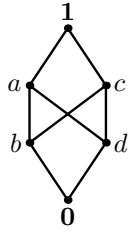}
\caption{A bounded graded poset that is not a lattice.\label{fig:non-lattice}}
\end{figure}

\begin{prop}[Lattice or bowtie]\label{prop:lattice-bowties}
  A bounded graded poset $P$ is a lattice iff $P$ contains no bowties.
\end{prop}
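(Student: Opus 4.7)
The forward direction is immediate: in a lattice the join of any two elements is unique, so the existence of two distinct minimal upper bounds $a \neq c$ of a common pair $\{b,d\}$---as required by the bowtie condition---is already impossible, and the bowtie rules out $b$ and $d$ themselves by the same argument on the meet side.

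For the converse I plan to show that every pair has a join; existence and uniqueness of meets then follow by duality, since the bowtie condition is self-dual under reversing the order. As a preliminary I note that minimal upper bounds of any pair $x,y \in P$ always exist: the set $U$ of common upper bounds contains $\one$, and since the rank of any element of $U$ is bounded below by $\max(\rk(x),\rk(y))$ and bounded above by $\rk(\one)$, an element of $U$ of minimum rank is automatically minimal in $U$. The analogous statement holds for maximal lower bounds.

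Assuming for contradiction that some pair $x,y$ has two distinct minimal upper bounds $a \neq c$, the plan is to produce a bowtie. The natural candidates for the other two vertices are a maximal common lower bound $b$ of $\{a,c\}$ satisfying $b \geq x$ and a maximal common lower bound $d$ of $\{a,c\}$ satisfying $d \geq y$; both exist by the rank argument above. By construction $b$ and $d$ are maximal lower bounds of $\{a,c\}$.

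The substantive verification---and the step I expect to require the most care---is that $a$ and $c$ are in fact minimal upper bounds of the new pair $\{b,d\}$ and that all four elements are pairwise distinct. For minimality, any common upper bound $u$ of $\{b,d\}$ with $u \leq a$ satisfies $u \geq b \geq x$ and $u \geq d \geq y$, so $u$ is a common upper bound of $\{x,y\}$ bounded above by $a$; minimality of $a$ over $\{x,y\}$ forces $u = a$, and the symmetric argument handles $c$. For distinctness, a coincidence like $b=d$ would make $b$ a common upper bound of $\{x,y\}$ lying weakly below $a$, forcing $b=a$ and then $a \leq c$, which contradicts $a$ and $c$ being distinct minimal upper bounds; collapses such as $a=b$ are ruled out by the dual argument using maximality of $b$ over $\{a,c\}$. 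Once all four elements are confirmed distinct, $(a,b,c,d)$ is a bowtie, contradicting the hypothesis and completing the proof.
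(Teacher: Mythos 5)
Your argument is correct and is essentially the paper's own proof: assume some pair $x,y$ has two distinct minimal upper bounds $a,c$ (existence of minimal upper bounds via boundedness and the rank function), choose maximal lower bounds $b\geq x$ and $d\geq y$ of $\{a,c\}$, and verify the bowtie conditions, with the meet case handled by duality where the paper simply says the argument is analogous. One small correction: the collapse $a=b$ is not excluded by maximality of $b$ over $\{a,c\}$ (if $a=b$ then $b$ is simply the maximum lower bound of $\{a,c\}$, so no contradiction arises there), but rather because $a=b\leq c$ would make the upper bound $a$ of $\{x,y\}$ strictly smaller than $c$, contradicting that $a$ and $c$ are distinct minimal upper bounds of $\{x,y\}$ --- the same mechanism you already used to rule out $b=d$.
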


\begin{proof}
  If $P$ contains a bowtie, then $b$ and $d$ have no join and $P$ is
  not a lattice.  In the other direction, suppose $P$ is not a lattice
  because $x$ and $y$ have no join.  An upper bound exists because $P$
  is bounded, and a minimal upper bound exists because $P$ is graded.
  Thus $x$ and $y$ must have more than one minimal upper bound.  Let
  $a$ and $c$ be two such minimal upper bounds and note that $x$ and
  $y$ are lower bounds for $a$ and $c$.  If $b$ is a maximal lower
  bound of $a$ and $c$ satisfying $b \geq x$ and $d$ is a maximal
  lower bound of $a$ and $c$ satisfying $d \geq y$, then $a$, $b$,
  $c$, $d$ form a bowtie.  We know that $a$ and $c$ are minimal upper
  bounds of $b$ and $d$ and that $b$ and $d$ are distinct since either
  failure would create an upper bound of $x$ and $y$ that contradicts
  the minimality of $a$ and $c$.  When $x$ and $y$ have no meet, the
  proof is analogous.
\end{proof}

Posets can be used to construct simplicial complexes.

\begin{defn}[Order complex]
  The \emph{order complex} of a poset $P$ is a simplicial complex
  $\order{P}$ constructed as follows.  There is a vertex $v_x$ in
  $\order{P}$ for every $x \in P$, an edge $e_{xy}$ for all $x<y$ and
  more generally there is a $k$-simplex in $\order{P}$ with vertex set
  $\{v_{x_0}, v_{x_1}, \ldots, v_{x_k}\}$ for every finite chain $x_0
  < x_1 < \cdots < x_k$ in $P$.  When $P$ is bounded, $v_\zero$ and
  $v_{\one}$ are the \emph{endpoints} of $\order{P}$, and the edge
  $e_{\zero\one}$ connecting them is its \emph{diagonal}.
\end{defn}

The order complex of the poset shown in Figure~\ref{fig:non-lattice}
has $6$ vertices, $13$ edges, $12$ triangles and $4$ tetrahedra.
Since every maximal chain contains $\zero$ and $\one$, all four
tetrahedra contain the diagonal $e_{\zero\one}$.

\begin{prop}[Contractible]\label{prop:contractible}
  If a poset is bounded below or bounded above, then its order complex
  is contractible.
\end{prop}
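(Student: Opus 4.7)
The plan is to exploit that a bottom or top element provides a universal cone point. By symmetry (replace $P$ by its opposite poset $P^{op}$, which has the same order complex), it suffices to treat the case when $P$ is bounded below, with minimum $\zero \in P$.

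The key observation is the following: for any finite chain $x_0 < x_1 < \cdots < x_k$ in $P$, either $x_0 = \zero$, or else $\zero < x_0 < x_1 < \cdots < x_k$ is also a chain. Thus every simplex of $\order{P}$ is a face of a simplex containing the vertex $v_\zero$. In simplicial terms, this says exactly that $\order{P}$ is the simplicial cone on the subcomplex $\order{P \setminus \{\zero\}}$ with apex $v_\zero$; equivalently, $v_\zero$ is contained in every maximal simplex and $\order{P}$ equals the star of $v_\zero$.

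Once this combinatorial cone structure is in hand, contractibility is automatic: the straight-line homotopy $H(p,t) = (1-t)\, p + t\, v_\zero$, carried out simplex-by-simplex, is well-defined because every simplex containing a point $p$ also contains $v_\zero$, and it gives a deformation retraction of $\order{P}$ onto $\{v_\zero\}$. The case when $P$ is bounded above is identical using $v_\one$ as the cone point, since $\order{P} = \order{P^{op}}$.

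There is essentially no obstacle here; the only thing to be careful about is the edge case where $P = \{\zero\}$ consists of a single element, in which case $\order{P}$ is a single point and contractibility is trivial. Everything else follows from the standard fact that a simplicial cone is contractible.
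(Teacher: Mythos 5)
Your proof is correct and is essentially the paper's argument: the paper also observes that $\order{P}$ is a cone over $\order{P\setminus\{x\}}$ with apex $v_x$ for an extremum $x$, and you have simply spelled out the cone structure and the resulting straight-line contraction in more detail.
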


\begin{proof}
  If $x$ is an extremum of $P$, then $\order{P}$ is a topological cone
  over the complex $\order{P \setminus \{x\}}$ with $v_x$ as the apex
  of the cone.
\end{proof}

\section{Complexes}\label{sec:complex}

Next we review the theory of piecewise Euclidean and piecewise
spherical cell complexes built out of Euclidean or spherical
polytopes, respectively.  For further background on polytopes see
\cite{Zi95} and for polytopal complexes see \cite{BrHa99}.

\begin{defn}[Euclidean polytope]
  A \emph{Euclidean polytope} is a bounded intersection of a finite
  collection of closed half-spaces of a Euclidean space, or,
  equivalently, it is the convex hull of a finite set of points.  A
  \emph{proper face} is a nonempty subset that lies in the boundary of
  a closed half-space containing the entire polytope.  Every proper
  face of a polytope is itself a polytope.  In addition there are two
  \emph{trivial faces}: the empty face $\emptyset$ and the polytope
  itself.  The \emph{interior} of a face is the collection of its
  points that do not belong to a proper subface, and every polytope is
  a disjoint union of the interiors of its faces.  The
  \emph{dimension} of a face is the dimension of the smallest affine
  subspace that containing it.  A $0$-dimensional face is a
  \emph{vertex} and a $1$-dimensional face is an \emph{edge}.
\end{defn}

\begin{defn}[\pe complex]
  A \emph{piecewise Euclidean complex} (or \emph{\pe complex}) is the
  regular cell complex that results when a disjoint union of Euclidean
  polytopes are glued together via isometric identifications of their
  faces.  For simplicity we usually insist that every polytope
  involved in the construction embeds into the quotient and that the
  intersection of any two polytopes be a face of each.  If there are
  only finitely many isometry types of polytopes used in the
  construction, we say it is a complex with \emph{finite shapes}.
\end{defn}

A basic result due to Bridson is that a \pe complex with finite shapes
is a geodesic metric space, i.e. the distance between two points in
the quotient is well-defined and achieved by a path of that length
connecting them.  This was a key result from Bridson's thesis
\cite{Br91} and is the main theorem of Chapter I.7 in \cite{BrHa99}.

The \pe complexes built out of cubes deserve special attention.

\begin{exmp}[Cube complexes]
  A \emph{cube complex} is a (connected) \pe complex $K$ in which
  every cell used in its construction is isometric to a metric cube of
  some dimension.  Although it is traditional to use cubes with unit
  length sides, this is not strictly necessary.  The fact that faces
  of $K$ are identified by isometries, together with connectivity,
  does, however, imply that every edge has the same length and thus
  $K$ is a rescaled version of a unit cube complex.
\end{exmp}

In the same way that every poset has an associated cell complex, every
regular cell complex has an associated poset.

\begin{defn}[Face posets]
  Every regular cell complex $K$, such as a \pe complex, has an
  associated \emph{face poset} $P$ with one element $x_\sigma$ for
  each cell $\sigma$ in $K$ ordered by inclusion, that is $x_\sigma
  \leq x_\tau$ iff $\sigma \subset \tau$ in $K$.  As is well-known,
  the operations of taking the face poset of a cell complex and
  constructing the order complex of a poset are nearly but not quite
  inverses of each other.  More specifically, the the order complex of
  the face poset of a regular cell complex is a topological space
  homeomorphic to original cell complex but with a different cell
  structure.  The new cells are obtained by barycentrically
  subdividing the old cells.
\end{defn}

\begin{defn}[Euclidean product]
  Let $K$ and $L$ be \pe complexes.  The metric on the product complex
  $K\times L$ is defined in the natural way: if the distance in $K$
  between $x$ and $x'$ is $d_K$ and the distance in $L$ between $y$
  and $y'$ is $d_L$, then the distance between $(x,y)$ and $(x',y')$
  is $\sqrt{(d_K)^2+(d_L)^2}$.  It is itself a \pe complex built out
  of products of polytopes.  More precisely, if $\sigma$ is a nonempty
  cell in $K$ and $\tau$ is a nonempty cell in $L$, then there is a
  polytope $\sigma \times \tau$ in $K \times L$.  Conversely, every
  nonempty cell in $K\times L$ is a polytope of this form.
\end{defn}

Euclidean polytopes and \pe complexes have spherical analogs.

\begin{defn}[Spherical polytope]
  A \emph{spherical polytope} is an intersection of a finite
  collection of closed hemispheres in $\sph^n$, or, equivalently, the
  convex hull of a finite set of points in $\sph^n$.  In both cases
  there is an additional requirement that the intersection or convex
  hull be contained in some open hemisphere of $\sph^n$.  This avoids
  antipodal points and the non-uniqueness of the geodesics connecting
  them.  With closed hemispheres replacing closed half-spaces and
  lower dimensional unit subspheres replacing affine subspaces, the
  other definitions are unchanged.
\end{defn}

\begin{defn}[\ps complex]
  A \emph{piecewise spherical complex} (or \emph{\ps complex}) is the
  regular cell complex that results when a disjoint union of spherical
  polytopes are glued together via isometric identifications of their
  faces.  As above we usually insist that each polytope embeds into
  the quotient and that they intersect along faces.  As above, so long
  as the complex has finite shapes, the result is a geodesic metric
  space.
\end{defn}

\begin{defn}[Vertex links]\label{def:vertex-lks}
  Let $v$ be a vertex of a Euclidean polytope $\sigma$.  The
  \emph{link of $v$ in $\sigma$} is the set of directions that remain
  in $\sigma$.  More precisely, the \emph{vertex link} $\lk(v,\sigma)$
  is the spherical polytope of unit vectors $u$ such that $v+\epsilon
  u$ is in $\sigma$ for some $\epsilon>0$.  More generally, let $v$ be
  a vertex of a \pe complex $K$.  The \emph{link of $v$ in $K$},
  denoted $\lk(v,K)$ is obtained by gluing together the spherical
  polytopes $\lk(v,\sigma)$ where $\sigma$ is a Euclidean polytope in
  $K$ with $v$ as a vertex.  The intuition is that $\lk(v,K)$ is a
  rescaled version of the boundary of an $\epsilon$-neighborhood of
  $v$ in $K$.
\end{defn}

A vertex link of a Euclidean polytope is a spherical polytope, a
vertex link of a \pe complex is a \ps complex, and a vertex link of a
cube complex is a simplicial complex.  The converse is also true in
the sense that every spherical polytope is a vertex link of some
Euclidean polytope, every \ps complex is a vertex link of some \pe
complex, and every simplicial complex is a vertex link of some cube
complex.

\begin{defn}[Spherical joins]
  Given \ps complexes $K$ and $L$, we define a new \ps complex $K\ast
  L$ that is the spherical analog of Euclidean product.  As remarked
  above, there is a \pe complex $K'$ with a vertex $v$ such that
  $K=\lk(v,K')$ and a \pe complex $L'$ with a vertex $w$ such that
  $L=\lk(w,L')$.  We define $K\ast L$ to be the link of $(v,w)$ in
  $K'\times L'$.  The cell structure of $K \ast L$ as a \ps complex is
  built from spherical joins of cells of $K$ with $L$.  In particular,
  each spherical polytope in $K\ast L$ is $\sigma \ast \tau$ where
  $\sigma$ is a cell of $K$ and $\tau$ is a cell of $L$.  One
  difference in the spherical context is that the empty face plays an
  important role.  There are cells in $K \ast L$ of the form $\sigma
  \ast \emptyset$ and $\emptyset \ast \tau$ that fit together to form
  a copy of $K$ and a copy of $L$, respectively.  What is going on is
  that the link of $(v,w)$ in $K' \times \{w\}=K'$ is $K \ast
  \emptyset=K$ and the link of $(v,w)$ in $\{v\} \times L'=L'$ is
  $\emptyset \ast L = L$.  The spherical join $K \ast L$ can
  alternatively be defined as the smallest \ps complex containing a
  copy of $K$ and a copy of $L$ such that every point of $K$ is
  distance $\frac{\pi}{2}$ from every point of $L$
  \cite[p.63]{BrHa99}.  Spherical join is a commutative and
  associative operation on \ps complexes with the empty complex
  $\emptyset$ as its identity.
\end{defn}

Next we extend the notion of a vertex link to the link of a face of a
polytope and the link of a cell in a \pe complex.

\begin{defn}[Face links]
  Let $x$ be a point in an $n$-dimensional Euclidean polytope
  $\sigma$, let $\tau$ be the unique face of $\sigma$ that contains
  $x$ in its interior, let $k$ be the dimension of $\tau$, and define
  $\lk(x,\sigma)$ as in Definition~\ref{def:vertex-lks}.  If $x$ is not a
  vertex, then $\lk(x,\sigma)$ is not a spherical polytope.  In fact,
  $\lk(x,\sigma) \supset \lk(x,\tau) = \sph^{k-1}$ which contains
  antipodal points since $k>0$.  To remedy this situation we note that
  $\lk(x,\sigma) = \lk(x,\tau) \ast \lk(\tau,\sigma)$ where the latter
  is a spherical polytope defined as follows.  The \emph{link of
    $\tau$ in $\sigma$} is the set of directions perpendicular to
  $\tau$ that remain in $\sigma$.  More precisely, the \emph{face
    link} $\lk(\tau,\sigma)$ is the spherical polytope of unit vectors
  $u$ perpendicular to the affine hull of $\tau$ such that for any $x$
  in the interior of $\tau$, $x+\epsilon u$ is in $\sigma$ for some
  $\epsilon>0$.  More generally, let $\tau$ be a cell of a \pe complex
  $K$.  The \emph{link of $\tau$ in $K$}, denoted $\lk(\tau,K)$, is
  obtained by gluing together the spherical polytopes
  $\lk(\tau,\sigma)$ where $\sigma$ is a Euclidean polytope in $K$
  with $\tau$ as a face.  As an illustration, if $x$ is a point in an
  edge $\tau$ of a tetrahedron $\sigma$, then $\lk(\tau,\sigma)$ is a
  spherical arc whose length is the size of the dihedral angle between
  the triangles containing $\tau$, whereas $\lk(x,\tau) = \sph^0$ and
  $\lk(x,\sigma) = \lk(x,\tau) \ast \lk(\tau,\sigma) = \sph^0 \ast
  \lk(\tau,\sigma)$ is a lune of the $2$-sphere.  More generally, if
  $x$ is a point in a \pe complex $K$, $\tau$ is the unique cell of
  $K$ containing $x$ in its interior, and $k$ is the dimension of
  $\tau$, then, viewing $\lk(x,K)$ as a rescaling of the boundary of
  an $\epsilon$-neighborhood of $x$ in $K$, we have that $\lk(x,K) =
  \lk(x,\tau) \ast \lk(\tau,K) = \sph^{k-1} \ast \lk(\tau,K)$.
\end{defn}

\begin{defn}[Links in \ps complexes]
  Let $\sigma$ be a cell in a \ps complex $K$.  To define
  $\lk(\sigma,K)$ we find a \pe complex $K'$ with vertex $v$ such that
  $K=\lk(v,K')$ and then identify the unique cell $\sigma'$ in $K'$
  such that $\sigma=\lk(v,\sigma')$.  We then define the \ps complex
  $\lk(\sigma,K)$ as the \ps complex $\lk(\sigma',K')$.
\end{defn}

We conclude by recording some elementary properties of links and
joins.

\begin{prop}[Links of links]\label{prop:lk-lk}
  If $\sigma \subset \sigma'$ are cells in a \pe or \ps complex $K$
  then there is a cell $\tau$ in $L=\lk(\sigma,K)$ such that
  $\lk(\tau,L) = \lk(\sigma',K)$.  Moreover, the link of every cell
  $\tau$ in $\lk(\sigma,K)$ arises in this way.  In other words, a
  link of a cell in the link of a cell is a link of a larger cell in
  the original complex.
\end{prop}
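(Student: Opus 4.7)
The plan is to reduce to a single polytope, apply an orthogonal-decomposition identity there, and then transfer the \ps case to the \pe case via the definition of links in \ps complexes. Recall that $\lk(\sigma,K)$ is built by gluing the spherical polytopes $\lk(\sigma,\rho)$ as $\rho$ ranges over the Euclidean polytopes in $K$ having $\sigma$ as a face.

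First, I would fix a single Euclidean polytope $\rho$ in $K$ containing the chain $\sigma \subset \sigma'$, pick a point $x$ in the interior of $\sigma$, and decompose the ambient affine space as $\mathrm{aff}(\sigma) \oplus \mathrm{aff}(\sigma)^\perp$. The face link $\lk(\sigma,\rho)$ lives as a spherical polytope in the unit sphere of $\mathrm{aff}(\sigma)^\perp$, and inside it the directions pointing into $\sigma'$ form a face canonically identified with $\tau_\rho := \lk(\sigma,\sigma')$. I would then verify the identity
\[
   \lk(\tau_\rho,\ \lk(\sigma,\rho)) \;=\; \lk(\sigma',\rho),
\]
by observing that both sides are canonically the set of unit vectors in $\mathrm{aff}(\sigma')^\perp \cap \mathrm{aff}(\rho)$ which point into $\rho$. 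This is a routine linear-algebra computation using the transitivity of orthogonal complementation for the nested affine hulls $\mathrm{aff}(\sigma) \subset \mathrm{aff}(\sigma') \subset \mathrm{aff}(\rho)$.

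Second, I would verify that these per-polytope identifications respect the gluings defining face links in complexes. As $\rho$ varies over the polytopes of $K$ having $\sigma'$ as a face, the pieces $\lk(\sigma,\sigma')$ all agree and assemble into a single cell $\tau$ of $L = \lk(\sigma,K)$, while the pieces $\lk(\sigma',\rho)$ glue to form $\lk(\sigma',K)$; the Step~1 identifications commute with these gluings, yielding $\lk(\tau,L) = \lk(\sigma',K)$. For the ``moreover'' clause, I would invoke the general fact that the cells of $\lk(\sigma,K)$ are in bijection with cells of $K$ that strictly contain $\sigma$: within each polytope $\rho$, the faces of $\lk(\sigma,\rho)$ are precisely $\lk(\sigma,\sigma')$ for $\sigma \subsetneq \sigma' \subseteq \rho$, and these match across the gluings. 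Hence every cell $\tau$ of $L$ arises as $\lk(\sigma,\sigma')$ for some cell $\sigma'$ of $K$ strictly containing $\sigma$, so the first part applies. Finally, the \ps case reduces to the \pe case: by definition, a link in a \ps complex $K$ is computed by choosing a \pe complex $K'$ and a vertex $v$ with $K = \lk(v,K')$, and the \pe version of the proposition applied inside $K'$ supplies the required identity.

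The main obstacle is the first step: producing the identification of $\lk(\tau_\rho, \lk(\sigma,\rho))$ with $\lk(\sigma',\rho)$ cleanly enough to give equality rather than merely an isometry, and then checking in Step~2 that these per-polytope identifications are sufficiently canonical to commute with the gluings of the complex. Everything else is bookkeeping.
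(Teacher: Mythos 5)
The paper states this proposition with no proof at all --- it is ``recorded'' as an elementary property of links --- so there is nothing to compare line by line; what matters is whether your argument is sound, and it is. The per-polytope identity $\lk(\lk(\sigma,\sigma'),\lk(\sigma,\rho))=\lk(\sigma',\rho)$ via the nested direction spaces of $\mathrm{aff}(\sigma)\subset\mathrm{aff}(\sigma')\subset\mathrm{aff}(\rho)$ is exactly the right computation (the relevant complement is $V_{\sigma'}^{\perp}\cap V_{\rho}$, where $V$ denotes the linear space parallel to the affine hull, pointing into $\rho$ from the interior of $\sigma'$), and your ``moreover'' step correctly rests on the bijection between cells of $\lk(\sigma,K)$ and cells of $K$ strictly containing $\sigma$; you should note that this bijection uses the paper's standing assumption that each polytope embeds in the quotient and that polytopes intersect along faces, since without it distinct cells of the link could map to the same cell of $K$. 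Two small points deserve a sentence in a written-up version. First, for a single polytope the paper only defines links of faces of \emph{Euclidean} polytopes directly, so when you form $\lk(\tau_\rho,\lk(\sigma,\rho))$ for the spherical polytope $\lk(\sigma,\rho)$ you should either use your linear-algebra description as the definition or pass through the cone on $\lk(\sigma,\rho)$, which is the \pe model the paper's definition of \ps links implicitly invokes. Second, in the reduction of the \ps case to the \pe case, the cells $\sigma\subset\sigma'$ of $K=\lk(v,K')$ correspond to cells $\hat\sigma\subset\hat\sigma'$ of $K'$ containing $v$, and $\lk(\tau,L)$ for $L=\lk(\hat\sigma,K')$ is computed via some \pe model of $L$; strictly speaking one needs the (paper-implicit) fact that the result does not depend on which \pe model is chosen, or else one should exhibit $L$ as a vertex link inside a \pe complex built from $K'$. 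Neither point is a gap in substance --- the paper itself elides both --- and with those remarks your proof is complete and is the standard verification of this fact.
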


\begin{prop}[Links of joins]\label{prop:link-join}
  Let $K$ and $L$ be \ps complexes with cells $\sigma$ and $\tau$
  respectively.  If $K' = \lk(\sigma,K)$ and $L' = \lk(\tau,L)$, then
  $K' \ast L$, $K \ast L'$ and $K' \ast L'$ are links of cells in $K
  \ast L$.  Moreover, every link of a cell in $K \ast L$ is of one of
  these three types.
\end{prop}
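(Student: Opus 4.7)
The plan is to reduce the statement to Proposition~\ref{prop:lk-lk} together with a product-link decomposition for Euclidean products. First I would invoke the definition of spherical join to fix \pe complexes $K''$ and $L''$ with vertices $v \in K''$, $w \in L''$ such that $K = \lk(v, K'')$, $L = \lk(w, L'')$, and $K \ast L = \lk((v,w), K'' \times L'')$. Cells of $K \ast L$ are then in bijection with cells of $K'' \times L''$ having $(v,w)$ as a vertex, and each such cell is a product $\alpha \times \beta$ where $v$ is a vertex of $\alpha$ and $w$ is a vertex of $\beta$. Under this bijection the cell $\sigma \ast \tau$ of $K \ast L$ corresponds to $\alpha \times \beta$, with $\sigma = \lk(v,\alpha)$ (and $\sigma = \emptyset$ exactly when $\alpha = \{v\}$), and similarly $\tau = \lk(w,\beta)$.

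Next I would apply Proposition~\ref{prop:lk-lk} to the inclusion $\{(v,w)\} \subset \alpha \times \beta$ to rewrite
\[
  \lk(\sigma \ast \tau,\, K \ast L) \;=\; \lk(\alpha \times \beta,\, K'' \times L'').
\]
The geometric heart of the argument is the product-link decomposition
\[
  \lk(\alpha \times \beta,\, K'' \times L'') \;=\; \lk(\alpha, K'') \ast \lk(\beta, L''),
\]
which follows directly from the definitions: a unit vector in $K'' \times L''$ perpendicular to the affine hull of $\alpha \times \beta$ decomposes uniquely into components perpendicular to $\alpha$ and perpendicular to $\beta$, and the two families of such directions are mutually orthogonal, so their combination is precisely a spherical join. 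A second use of Proposition~\ref{prop:lk-lk}, this time applied to the inclusion $\{v\} \subset \alpha$ inside $K''$, identifies $\lk(\alpha, K'')$ with $\lk(\sigma, K) = K'$ when $\sigma$ is a genuine cell of $K$, and with $\lk(v, K'') = K$ itself when $\alpha = \{v\}$; the $L$-factor is treated symmetrically.

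Combining these identifications produces exactly the three asserted join types: $K' \ast L'$ in the generic case $\sigma,\tau \neq \emptyset$, and the degenerations $K \ast L'$ and $K' \ast L$ when $\sigma$ or $\tau$ is empty. Conversely, the bijection with cells of $K'' \times L''$ shows that every link of a cell in $K \ast L$ arises from some $\alpha \times \beta$, hence takes one of these three forms; the remaining possibility $\alpha = \{v\}$, $\beta = \{w\}$ corresponds to the empty face of $K \ast L$ and simply recovers the whole complex rather than a cell link. The main obstacle I expect is the bookkeeping around the empty faces in the spherical join, making sure that the degenerate cases $\sigma = \emptyset$ and $\tau = \emptyset$ line up correctly with $\alpha = \{v\}$ and $\beta = \{w\}$; once those conventions are made explicit, the proof collapses to two applications of Proposition~\ref{prop:lk-lk} bracketing the product-link decomposition.
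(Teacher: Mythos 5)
Your proof is correct. The paper records this proposition without proof as an ``elementary property,'' and your route --- realizing $K \ast L$ as $\lk((v,w), K'' \times L'')$, using the orthogonal product-link decomposition $\lk(\alpha\times\beta, K''\times L'') = \lk(\alpha,K'')\ast\lk(\beta,L'')$, and translating back and forth via the definition of links in \ps complexes (equivalently Proposition~\ref{prop:lk-lk}), with the empty faces $\alpha=\{v\}$, $\beta=\{w\}$ accounting for the three cases --- is exactly the argument the paper's definitions are set up to yield.
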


\section{Curvature}\label{sec:curvature}

As a final bit of background, we review curvature conditions such as
$\cat(0)$ and $\cat(1)$.  In general these terms are defined by
requiring that certain geodesic triangles be ``thinner'' than
comparison triangles in $\R^2$ or $\sph^2$, but because we are just
reviewing $\cat(0)$ \pe complexes and $\cat(1)$ \ps complexes,
alternate definitions are available that only involve the existence of
short geodesic loops in links of cells.

\begin{defn}[Geodesics and geodesic loops]\label{def:geos}
  A \emph{geodesic} in a metric space is an isometric embedding of a
  metric interval and a \emph{geodesic loop} is an isometric embedding
  of a metric circle.  A \emph{local geodesic} and \emph{local
    geodesic loop} are weaker notions that only require the image
  curves be locally length minimizing.  For example, a path more than
  halfway along the equator of a $2$-sphere is a local geodesic but
  not a geodesic and a loop that travels around the equator twice is a
  local geodesic loop but not a geodesic loop.  A loop in a \ps
  complex of length less than $2\pi$ is called \emph{short} and a \ps
  complex that contains no short local geodesic loops is called
  \emph{large}.
\end{defn}

\begin{defn}[Curvature conditions]\label{def:curv}
  If $K$ is a \pe complex with finite shapes and the link of every
  cell in $K$ is large, then $K$ is \emph{nonpositively curved} or
  \emph{locally $\cat(0)$}.  If, in addition, $K$ is connected and
  simply connected, then $K$ is \emph{$\cat(0)$}.  As a consequence of
  the general theory of $\cat(0)$ spaces, such a complex $K$ is
  contractible.  If $K$ is a \ps complex and the link of every cell in
  $K$ is large, then $K$ is \emph{locally $\cat(1)$}.  If, in
  addition, $K$ itself is large, then $K$ is \emph{$\cat(1)$}.
\end{defn}

It follows from the definitions and Proposition~\ref{prop:lk-lk} that a \pe
complex is nonpositively curved iff its vertex links are $\cat(1)$.
Moreover, in the same way that every \ps complex is a vertex link of a
\pe complex, every $\cat(1)$ \ps complex is the vertex link of a
$\cat(0)$ \pe complex.  

A standard example where the $\cat(0)$ condition is easy to check is
in a cube complex.  The link of a cell in a cube complex is a \ps
simplicial complex built out of all-right spherical simplices,
i.e. spherical simplices in which every edge has length
$\frac{\pi}{2}$.  To check whether a cube complex is $\cat(0)$ it is
sufficient to check whether its vertex links satisfy the purely
combinatorial condition of being flag complexes.

\begin{defn}[Flag complexes]\label{def:flag}
  A simplicial complex contains an \emph{empty triangle} if there are
  three vertices pairwise joined by edges but no triangle with these
  three vertices as its corners.  More generally, a simplicial complex
  $K$ contains an \emph{empty simplex} if for some $n>1$, $K$ contains
  the boundary of an $n$-simplex but no $n$-simplex with the same
  vertex set.  A \emph{flag complex} is a simplicial complex with no
  empty simplices.
\end{defn}

\begin{prop}[$\cat(0)$ cube complexes]\label{prop:cube-curvature}
  A cube complex $K$ is $\cat(0)$ iff $\lk(v,K)$ is a flag complex for
  every vertex $v$ which is true iff $\lk(\sigma,K)$ has no empty
  triangles for every cell $\sigma$.
\end{prop}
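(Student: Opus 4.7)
The plan is to chain together two equivalences. By the remark following Definition~\ref{def:curv}, $K$ is $\cat(0)$ iff each vertex link $\lk(v,K)$ is $\cat(1)$. Since the vertex link of a cube is an all-right spherical simplex and this class is closed under taking face links, each $\lk(v,K)$ is an all-right spherical simplicial complex and so are all of its iterated links. Thus the first equivalence will follow from Gromov's lemma: an all-right spherical simplicial complex is $\cat(1)$ iff it is a flag complex. The second equivalence---between the flag property of vertex links and the absence of empty triangles in every cell link of $K$---I would handle combinatorially via Proposition~\ref{prop:lk-lk}.

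For the second equivalence, fix a vertex $v$ of $K$ and set $L = \lk(v,K)$. Proposition~\ref{prop:lk-lk} shows that the iterated links $\lk(\tau,L)$, as $\tau$ ranges over cells of $L$, enumerate exactly the complexes $\lk(\sigma,K)$ with $v \in \sigma$. So the equivalence reduces to the purely simplicial claim that a simplicial complex $L$ is flag iff no iterated link $\lk(\tau,L)$ contains an empty triangle (allowing $\tau = \emptyset$). One direction uses the standard fact that simplicial links of flag complexes are again flag, ruling out empty triangles in every iterated link. For the converse, if $L$ fails to be flag it contains an empty $n$-simplex with $n \geq 2$; if $n = 2$ this is an empty triangle in $L$ itself, and if $n \geq 3$ then removing any one vertex of that empty simplex yields an empty $(n-1)$-simplex in its vertex link, so iterating produces an empty triangle in some iterated link.

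For Gromov's lemma I would induct on $\dim L$. If $L$ is not flag, an empty simplex produces a short closed local geodesic by a direct construction on its boundary, so $L$ is not large and hence not $\cat(1)$. If $L$ is flag, then each vertex link of $L$ is again a flag all-right spherical complex of strictly lower dimension, so by induction each such link is $\cat(1)$ and $L$ is locally $\cat(1)$. It then remains to verify that $L$ itself is large, and this is the main obstacle. The standard argument is delicate: one tracks a hypothetical short closed local geodesic $\gamma$ by the cyclic sequence of simplices it crosses, uses local geodesy at each transition together with the flag hypothesis to force consecutive transition vertices to be non-adjacent in the $1$-skeleton of $L$, and shows that each arc of $\gamma$ between consecutive transitions must have length at least $\pi/2$ and that such a closed cycle must involve at least four transitions, forcing $\mathrm{length}(\gamma) \geq 2\pi$. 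Combining Gromov's lemma with the simplicial equivalence above then completes the proof of the proposition.
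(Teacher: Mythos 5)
The paper offers no proof of this proposition at all: it is quoted as the standard flag criterion for cube complexes (Gromov's lemma; see \cite{BrHa99}, Chapter II.5), so there is no in-paper argument to compare yours against, and your overall route---reduce to vertex links being $\cat(1)$, observe that these are all-right spherical simplicial complexes closed under taking links, invoke Gromov's lemma, and handle the ``no empty triangles in any $\lk(\sigma,K)$'' reformulation combinatorially via Proposition~\ref{prop:lk-lk}---is exactly the standard one. Your combinatorial half is sound: links of flag complexes are flag, and an empty $n$-simplex with $n\geq 3$ yields an empty $(n-1)$-simplex in the link of any of its vertices, so iteration produces an empty triangle in some iterated link; combined with Proposition~\ref{prop:lk-lk} this gives the second equivalence. (A small remark: like the paper's statement, you suppress simple connectivity---the remark after Definition~\ref{def:curv} only says ``nonpositively curved iff vertex links are $\cat(1)$,'' and $\cat(0)$ additionally requires $K$ simply connected---but that looseness is the paper's, not yours.)

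The gap is in the one genuinely hard step, the ``flag implies large'' half of Gromov's lemma, which you flag as the main obstacle but whose sketch does not work as written. A local geodesic can cross an individual all-right simplex in an arbitrarily short arc, so if ``transitions'' means passages from one simplex to the next, the claims that each arc between consecutive transitions has length at least $\frac{\pi}{2}$ and that there are at least four transitions are false as stated, and nothing in your outline substitutes for them. The correct unit of analysis in the standard proof is the intersection of a hypothetical short closed geodesic $\gamma$ with the closed star of a vertex $v$: flagness guarantees that this star is the full subcomplex spanned by $v$ and its neighbors and that it is isometric to the spherical join of $v$ with $\lk(v,L)$, whence any arc of $\gamma$ meeting the open star has length at least $\pi$; one then either finds two disjoint such arcs or pushes $\gamma$ off the star without increasing its length and concludes by induction on dimension (the locally $\cat(1)$ part being supplied exactly as you say). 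Either carry out that star argument in detail or simply cite Gromov's lemma from \cite{BrHa99}, which is in effect what the paper does by stating the proposition without proof.
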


If a \ps complex $K$ is locally $\cat(1)$ but not $\cat(1)$ (i.e. the
links of $K$ are large, but $K$ itself is not large) then we say $K$
is \emph{not quite $\cat(1)$}.  In \cite{Bo95} Brian Bowditch
characterized not quite $\cat(1)$ spaces using the notion of a
shrinkable loop.

\begin{defn}[Shrinkable loop]\label{def:shrinkable}
  Let $\gamma$ be a rectifiable loop of finite length in a metric
  space.  We say that $\gamma$ is \emph{shrinkable} if there exists a
  continuous deformation from $\gamma$ to a loop $\gamma'$ that
  proceeds through rectifiable curves of finite length in such a way
  that the lengths of the intermediate curves are nonincreasing and
  the length of $\gamma'$ is strictly less than the length of
  $\gamma$.  If $\gamma$ is not shrinkable, it is \emph{unshrinkable}.
\end{defn}

The following is a special case of the general results proved in
\cite{Bo95}.

\begin{lem}[Not quite $\cat(1)$]\label{lem:bowditch}
  If $K$ is a locally $\cat(1)$ \ps complex with finite shapes, then
  the following are equivalent:
  \begin{enumerate}
  \item[1.] $K$ is not quite $\cat(1)$,
  \item[2.] $K$ contains a short geodesic loop,
  \item[3.] $K$ contains a short local geodesic loop, and
  \item[4.] $K$ contains a short unshrinkable loop.
  \end{enumerate}
\end{lem}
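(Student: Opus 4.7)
The proof proceeds by establishing a cycle of implications, with most steps being nearly tautological and the substantive content concentrated in a single compactness-based minimization.

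The equivalence $(1) \Leftrightarrow (3)$ is immediate from Definition~\ref{def:curv}: a locally $\cat(1)$ complex is $\cat(1)$ precisely when it is large, i.e., contains no short local geodesic loop, so being ``not quite $\cat(1)$'' is by definition the same as harboring such a loop. The implication $(2) \Rightarrow (3)$ is trivial, since an isometric embedding of a circle is a fortiori locally length-minimizing. For $(4) \Rightarrow (3)$, the plan is to reparametrize by arc length and argue that if the resulting loop fails to be locally length-minimizing at some point, then a short subarc admits a strictly shorter replacement; within a convex neighborhood provided by local $\cat(1)$, this replacement can be realized through a length-nonincreasing continuous deformation (for instance by geodesic homotopy holding the subarc endpoints fixed), contradicting unshrinkability.

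The substantive step is $(3) \Rightarrow (2)$, which is where the finite shapes hypothesis really does its work. Given a short local geodesic loop $\gamma_0$ of length $L_0 < 2\pi$, the plan is to minimize length among a suitable family of loops containing $\gamma_0$. Finite shapes together with the Arzel\`a--Ascoli theorem make the space of arc-length parametrized loops of length at most $L_0$ sequentially compact under uniform convergence, and the length functional is lower semi-continuous there. The main obstacle, and the heart of the proof, is choosing the class of competitors correctly: if $\gamma_0$ happens to be null-homotopic, its free homotopy class contains arbitrarily short loops and naive minimization collapses the length to zero. Following Bowditch~\cite{Bo95}, I would instead restrict to the family of loops reachable from $\gamma_0$ by continuous length-nonincreasing deformations, using local $\cat(1)$ to prevent any single deformation step from collapsing a loop to a point. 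A minimizer $\gamma^*$ then exists by compactness, has length at most $L_0 < 2\pi$, and must be a genuine geodesic loop, since any failure of length-minimality on a subarc would permit a further strict local shortening, contradicting minimality.

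Finally, the minimizer $\gamma^*$ produced in the previous step is unshrinkable by construction, as it realizes the infimum within the admissible family; this yields $(2) \Rightarrow (4)$ via the chain $(2) \Rightarrow (3) \Rightarrow (2) \Rightarrow (4)$ using the construction above, closing the cycle. The entire difficulty is thus concentrated in the compactness-based minimization, where the class of competitors must be chosen carefully to avoid null-homotopic collapse while still yielding an honest geodesic loop in the limit.
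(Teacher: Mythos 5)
The paper does not actually prove Lemma~\ref{lem:bowditch}: it is quoted as a special case of the general results of Bowditch \cite{Bo95}, so what you are attempting is a from-scratch reproof of Bowditch's theorem. Your easy implications are fine: $(1)\Leftrightarrow(3)$ is purely definitional, $(2)\Rightarrow(3)$ is trivial, and your $(4)\Rightarrow(3)$ argument (replace a non-geodesic subarc by the geodesic between its endpoints inside a small convex $\cat(1)$ ball, realized as a length-nonincreasing homotopy) is a standard and correct use of local convexity.

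The genuine gap is in the step $(3)\Rightarrow(2)$, on which your $(2)\Rightarrow(4)$ also depends. You restrict the minimization to loops reachable from $\gamma_0$ by length-nonincreasing deformations and assert that local $\cat(1)$ ``prevents any single deformation step from collapsing a loop to a point.'' That does not address the actual difficulty: nothing in your argument rules out that the infimum of length over this family is $0$, i.e.\ that $\gamma_0$, although a closed local geodesic, is itself shrinkable. Local $\cat(1)$ only controls loops at small scale (loops inside a ball of radius less than $\pi/2$ shrink monotonically), and proving that a short closed local geodesic cannot be shrunk by a length-nonincreasing deformation --- equivalently, that your infimum is positive and attained by a closed geodesic --- is precisely the substantive content of Bowditch's theorem; his proof requires real work (analysis of the curve-shortening process together with developing/comparison arguments and an Alexandrov-patchwork-type globalization below the scale $2\pi$), and your sketch assumes this conclusion rather than establishing it, which makes the chain $(2)\Rightarrow(3)\Rightarrow(2)\Rightarrow(4)$ circular at its core. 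Two further points are left unaddressed even granting non-collapse: the Arzel\`a--Ascoli limit must be shown to retain the unshrinkability property (the admissible family is not obviously closed under uniform limits), and since the paper's ``geodesic loop'' means an isometric embedding of a metric circle, you must still show the minimizing closed local geodesic is an isometric embedding --- the usual chord-surgery argument has to be checked to stay within the length-nonincreasing framework of Definition~\ref{def:shrinkable}. As it stands, the heart of the lemma is asserted, not proved.
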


This is an extremely useful lemma since it is sometimes easier to
establish that every loop in a space is shrinkable than it is to show
that it does not contain a short local geodesic.  Sometimes, for
example, a single homotopy shrinks every curve simultaneously.

\begin{defn}[Monotonic contraction]\label{def:mono-contract}
  Let $X$ be a metric space and let $H: X\times [0,1] \to X$ be a
  homotopy contracting $X$ to a point (i.e. $H_0$ is the identity map
  and $H_1$ is a constant map).  We say $H$ is a \emph{monotonic
    contraction} if $H$ simultaneously and monotonically shrinks every
  rectifiable curve in $X$ to a point.
\end{defn}

An example of a monotonic contraction is straightline homotopy from
the identity map on $\R^n$ to a constant map.  A spherical version of
monotonic contraction is needed in \S\ref{sec:orthoscheme}.

\begin{defn}[Hemispheric contraction]\label{def:hemi-contract}
  Let $u$ be a point in $\sph^n$ and let $X$ be a hemisphere of
  $\sph^n$ with $u$ as its pole, i.e. the ball of radius
  $\frac{\pi}{2}$ around $u$.  Every point $v$ in $X$ lies on a unique
  geodesic connecting $v$ to $u$ and we can define a homotopy that
  moves $v$ to $u$ at a constant speed so that at time $t$ it has
  traveled $t$ of the distance along this geodesic.  This
  \emph{hemispheric contraction} to $u$ is a monotonic contraction in
  the sense defined above.
\end{defn}

Although Lemma~\ref{lem:bowditch} only applies to locally $\cat(1)$
\ps complexes, such contexts can always be found when curvature
conditions fail.

\begin{prop}[Curvature and links]\label{prop:curv-lks}
  Let $K$ be a connected and simply-connected \pe complex with finite
  shapes.  If $K$ is not $\cat(0)$ then there is a cell $\sigma$ in
  $K$ such that $\lk(\sigma,K)$ is not quite $\cat(1)$.  Similarly, if
  $K$ is a \ps complex that is not $\cat(1)$ then either $K$ itself is
  not quite $\cat(1)$ or there is a cell $\sigma$ such that
  $\lk(\sigma,K)$ is not quite $\cat(1)$.
\end{prop}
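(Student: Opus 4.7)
My plan is to work with dimensional constraints: I would choose a cell $\sigma$ of maximum possible dimension subject to $\lk(\sigma,K)$ not being $\cat(1)$, and then use Proposition~\ref{prop:lk-lk} to see that $\lk(\sigma,K)$ must be locally $\cat(1)$. Combined with its failure to be $\cat(1)$, this forces it to be not quite $\cat(1)$.

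For the \pe case I first need a cell whose link is not $\cat(1)$ to start from. Because $K$ is connected and simply-connected, the general theory of nonpositively curved spaces alluded to in Definition~\ref{def:curv} gives that $K$ is $\cat(0)$ iff $K$ is locally $\cat(0)$, i.e.\ iff it is nonpositively curved. Combined with the observation recorded right after Definition~\ref{def:curv} that $K$ is locally $\cat(0)$ iff every vertex link of $K$ is $\cat(1)$, the failure of the $\cat(0)$ condition produces at least one vertex $v$ with $\lk(v,K)$ not $\cat(1)$. The finite shapes hypothesis bounds the dimensions of cells of $K$, so among the (nonempty) collection of cells whose link is not $\cat(1)$ I may choose one, call it $\sigma$, of maximum dimension.

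I then claim $L := \lk(\sigma,K)$ is not quite $\cat(1)$. By Proposition~\ref{prop:lk-lk}, the cells of $L$ are in bijection with the cells $\sigma'$ of $K$ that strictly contain $\sigma$, and under this bijection $\lk(\tau,L) = \lk(\sigma',K)$. Maximality of $\sigma$ forces every such $\lk(\sigma',K)$ to be $\cat(1)$, hence large, so every cell link of $L$ is large and $L$ is locally $\cat(1)$. Since $L$ also fails to be $\cat(1)$, the only remaining possibility is that $L$ itself is not large, so $L$ is not quite $\cat(1)$, as required.

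The \ps case is handled in the same spirit with a single preliminary check: if $K$ is locally $\cat(1)$, then the failure of $\cat(1)$ forces $K$ itself to be not large, so $K$ itself is not quite $\cat(1)$; otherwise some cell of $K$ has a non-large (and hence non-$\cat(1)$) link, and the same maximum-dimension argument within $K$ produces a cell $\sigma$ with $\lk(\sigma,K)$ not quite $\cat(1)$. The only slightly delicate ingredient in the whole argument is the global-from-local principle invoked at the start of the \pe case, namely the Cartan--Hadamard-style passage from locally $\cat(0)$ plus simply-connected to $\cat(0)$; once that is granted, the rest is combinatorial bookkeeping with Proposition~\ref{prop:lk-lk}.
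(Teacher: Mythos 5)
Your proposal is correct and takes essentially the same approach as the paper: both select a maximal cell (you by dimension, the paper by inclusion) among those whose links fail to be $\cat(1)$, and use Proposition~\ref{prop:lk-lk} together with that maximality to conclude the chosen link is locally $\cat(1)$ but not $\cat(1)$, hence not quite $\cat(1)$. The only cosmetic difference is that you spell out the preliminary step (Cartan--Hadamard plus the vertex-link criterion) that produces a cell with non-$\cat(1)$ link, which the paper compresses into one sentence.
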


\begin{proof}
  Let $\script{S}$ be the set of the cells $\sigma$ in $K$ such that
  $\lk(\sigma,K)$ is not $\cat(1)$ and order them by inclusion.
  Unless $K$ itself is a not quite $\cat(1)$ \ps complex, the set
  $\script{S}$ is not empty.  Moreover, because $K$ has finite shapes,
  $K$ is finite dimensional, and $\script{S}$ has maximal elements.
  If $\sigma$ is such a maximal element, then maximality combined with
  Proposition~\ref{prop:lk-lk} shows that $\lk(\sigma,K)$ is locally
  $\cat(1)$.  Since $\sigma$ is in $\script{S}$, $\lk(\sigma,K)$ is
  not $\cat(1)$.  Thus $\lk(\sigma,K)$ is not quite $\cat(1)$.
\end{proof}

To show that a \pe or \ps complex is not $\cat(0)$ or $\cat(1)$ it is
convenient to be able to construct and detect local geodesic loops.
We do this using piecewise geodesics.

\begin{defn}[Piecewise geodesics]
  Let $K$ be a \pe or \ps complex and let $(x_0,x_1,\ldots,x_n)$ be a
  sequence of points in $K$ such that for each $i$, $x_{i-1}$ and
  $x_i$ belong to a unique minimal common cell of $K$ and $x_0=x_n$.
  The \emph{piecewise geodesic loop} defined by this list is the
  concatenation of the unique geodesics from $x_{i-1}$ to $x_i$ in the
  (unique) minimal common cell containing them.  The points $x_i$ are
  its \emph{transition points}.  Piecewise geodesics are local
  geodesics iff they are locally geodesic at its transition points.
  This can be determined by examining two \emph{transition vectors}:
  the unit tangent vector at $x_i$ for the geodesic connecting $x_i$
  to $x_{i+1}$ and the unit tangent vector at $x_i$ for the geodesic
  from $x_i$ to $x_{i-1}$ (traversed in reverse).  These correspond to
  two points in $\lk(x_i,K)$.  We say that the transition points are
  \emph{far apart} if the distance between them is at least $\pi$
  inside $\lk(x_i,K)$.  Finally, a piecewise geodesic loop $\gamma$ in
  a \ps or \pe complex $K$ is a \emph{local geodesic} iff at every
  transition point $x$, the transition vectors are far apart in
  $\lk(x,K)$.
\end{defn}

We conclude by relating curvature, links and spherical joins.

\begin{prop}[$\cat(1)$ and joins]\label{prop:cat1-joins}
  Let $K$ and $L$ be \ps complexes.
  \begin{enumerate}
  \item[1.] $K \ast L$ is $\cat(1)$ iff $K$ and $L$ are $\cat(1)$.
  \item[2.] If $K \ast L$ is locally $\cat(1)$ then $K$ and $L$ are
    locally $\cat(1)$.
  \item[3.] If $K \ast L$ is not quite $\cat(1)$ then $K$ or $L$ is
    not quite $\cat(1)$.
  \end{enumerate}
  Similar assertions hold for spherical joins of the form $K_1 \ast
  K_2 \ast \cdots \ast K_n$.
\end{prop}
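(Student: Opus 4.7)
The plan is to prove the three claims in order, using the link formulas of Proposition~\ref{prop:link-join} together with Proposition~\ref{prop:lk-lk}, and reducing parts~(2) and~(3) to the core statement in part~(1).

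Part~(1) is the classical spherical join theorem, which I would either cite from \cite{BrHa99} or sketch directly. The forward direction uses the fact that $K$ and $L$ embed in $K \ast L$ as $\pi$-convex subcomplexes: the explicit distance formula for a spherical join shows that the length-minimizing path in $K \ast L$ between two points of $K$ at distance less than $\pi$ lies entirely inside the $K$ factor, so any $\cat(1)$ comparison triangle in $K$ doubles as a comparison triangle in $K \ast L$, and symmetrically for $L$. The reverse direction is harder; here one decomposes an arbitrary geodesic triangle in $K \ast L$ using the join parameters and verifies the $\cat(1)$ inequality by a trigonometric comparison with a triangle on $\sph^2$, in the same spirit as the decomposition $\sph^m \ast \sph^n = \sph^{m+n+1}$. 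This is the main obstacle, but it is well-documented.

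For part~(2), I would apply Proposition~\ref{prop:link-join} at the level of vertices. From the dimension formula $\dim(\sigma \ast \tau) = \dim \sigma + \dim \tau + 1$, every vertex of $K \ast L$ is a vertex of $K$ or of $L$. For a vertex $v$ of $K$ the link of $v$ in $K \ast L$ equals $\lk(v,K) \ast L$, and if $K \ast L$ is locally $\cat(1)$ then this join is $\cat(1)$, whereupon part~(1) forces $\lk(v,K)$ to be $\cat(1)$. Ranging $v$ over all vertices of $K$ shows every vertex link of $K$ is $\cat(1)$; Proposition~\ref{prop:lk-lk} then upgrades this to local $\cat(1)$-ness of $K$, and the argument for $L$ is symmetric.

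For part~(3), assume $K \ast L$ is not quite $\cat(1)$, so it is locally $\cat(1)$ but not $\cat(1)$. Part~(2) gives that $K$ and $L$ are both locally $\cat(1)$. The contrapositive of part~(1) forces at least one of them, say $K$, to fail $\cat(1)$, and then $K$ is locally $\cat(1)$ but not $\cat(1)$, i.e., not quite $\cat(1)$. The extension to multi-fold joins $K_1 \ast \cdots \ast K_n$ follows by induction on $n$ using the associativity of the spherical join.
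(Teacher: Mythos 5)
Your proposal is correct and follows essentially the same route as the paper: part (1) is cited from Bridson--Haefliger (Corollary II.3.15), part (2) follows because links of cells in $K \ast L$ decompose as joins via Proposition~\ref{prop:link-join} and then part (1) applies, part (3) combines the first two, and the multi-fold case is an easy induction. The only cosmetic difference is that the paper applies Proposition~\ref{prop:link-join} to the link of an arbitrary cell of $K$ directly, whereas you restrict to vertex links and then upgrade to local $\cat(1)$-ness via Proposition~\ref{prop:lk-lk}; both versions work.
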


\begin{proof}
  The first part is Corollary II.3.15 in \cite{BrHa99}.  For the
  second assertion suppose $K \ast L$ is locally $\cat(1)$ and let
  $K'$ be a link of $K$.  Since $K' \ast L$ is a link of $K \ast L$
  (Proposition~\ref{prop:link-join}), it is $\cat(1)$ by assumption.  But
  then $K'$ is $\cat(1)$ by the part $1$ and $K$ is locally $\cat(1)$.
  The third part merely combines the first two and extending to
  multiple joins is an easy induction.
\end{proof}

\section{Orthoschemes}\label{sec:orthoscheme}

In this section we introduce the shapes that H.S.M.~Coxeter called
``orthoschemes'' \cite{Cox91} and our main goal is to establish that
face links in orthoschemes decompose into simple shapes
(Corollary~\ref{cor:ortho-links}).  Roughly speaking an orthoscheme is the
convex hull of a piecewise linear path that proceeds along mutually
orthogonal lines.

\begin{defn}[Orthoschemes]
  Let $(v_0, v_1, \ldots, v_n)$ be an ordered list of $n+1$ points in
  $\R^n$ and for each $i\in [n]$ let $u_i$ be the vector from
  $v_{i-1}$ to $v_i$.  If the vectors $\{u_i\}$ form an orthogonal
  basis of $\R^n$ then the convex hull of the points $\{v_i\}$ is a
  metric $n$-simplex called an \emph{$n$-orthoscheme} that we denote
  $\ortho(v_0,\ldots,v_n)$.  If the vectors $\{u_i\}$ form an
  orthonormal basis, then it is a \emph{standard $n$-orthoscheme}.  It
  follows easily from the definition that every face of an orthoscheme
  (formed by selecting a subset of its vertices) is itself an
  orthoscheme, although not all faces of a standard orthoscheme are
  standard.  Faces defined by consecutive vertices are of particular
  interest and we use $O(i,j)$ to denote the face
  $\ortho(v_i,\ldots,v_j)$ of $O=\ortho(v_0,\ldots,v_n)$.  The points
  $v_i$ are the \emph{vertices} of the orthoscheme, $v_0$ and $v_n$
  are its \emph{endpoints} and the edge connecting $v_0$ and $v_n$ is
  its \emph{diagonal}.  A $3$-orthoscheme is shown in
  Figure~\ref{fig:3-ortho}.
\end{defn}

\begin{figure}
  \includegraphics{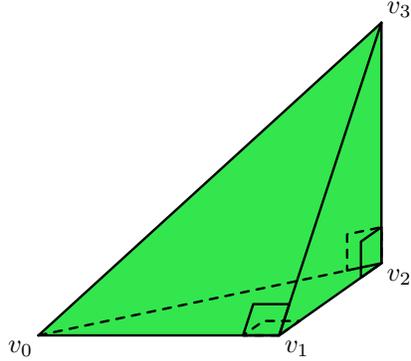}
  \caption{A $3$-dimensional orthoscheme.\label{fig:3-ortho}}
\end{figure}

For later use we define a contraction of an endpoint link of an
orthoscheme.

\begin{prop}[Endpoint contraction]\label{prop:endpt-contract}
  If $v$ is an endpoint of an orthoscheme $O$ and $u$ is the unit
  vector pointing from $v$ along its diagonal, then hemispheric
  contraction to $u$ monotonically contracts $\lk(v,O)$.
\end{prop}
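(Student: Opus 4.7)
My plan is to work with the explicit coordinate description of the orthoscheme coming from its defining orthogonal frame, reduce to the endpoint $v=v_0$ by the obvious symmetry that reverses the vertex order, and then verify that hemispheric contraction to $u$ is well-defined on $\lk(v_0,O)$ and carries $\lk(v_0,O)$ into itself. Once the contraction is seen to be a self-map, monotonicity is automatic from Definition~\ref{def:hemi-contract}.

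Write $O=\ortho(v_0,\ldots,v_n)$ with consecutive edge vectors $u_i=v_i-v_{i-1}$ forming an orthogonal basis of $\R^n$. The first step is a parametrization lemma: every point $x\in O$ can be written as $x=v_0+\sum_{j=1}^n s_j u_j$ where the coefficients satisfy $s_1\geq s_2\geq\cdots\geq s_n\geq 0$, and conversely every such combination (with $s_1\leq 1$) lies in $O$. This follows by expressing $x=\sum t_i v_i$ as a convex combination of the vertices and setting $s_j=t_j+t_{j+1}+\cdots+t_n$. Consequently, $\lk(v_0,O)$ consists exactly of the unit vectors of the form $w=\frac{1}{\|\sum s_j u_j\|}\sum_j s_j u_j$ with $s_1\geq s_2\geq\cdots\geq s_n\geq 0$ and not all $s_j$ zero.

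The diagonal direction is $u=\frac{1}{\|v_n-v_0\|}(u_1+u_2+\cdots+u_n)$, corresponding to the constant sequence $s_1=\cdots=s_n=1$. For any $w\in\lk(v_0,O)$ given by nonnegative weakly-decreasing coefficients $(s_j)$, I compute $\langle w,u\rangle=\frac{1}{\|w\|\,\|v_n-v_0\|}\sum_j s_j\|u_j\|^2$, which is strictly positive since not all $s_j$ vanish and each $\|u_j\|>0$. Hence $\lk(v_0,O)$ sits in the open hemisphere of $\sph^{n-1}$ with pole $u$, so hemispheric contraction to $u$ is defined on $\lk(v_0,O)$.

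The main step, and the one I expect to require the most care, is showing that the great-circle geodesic from $w$ to $u$ stays inside $\lk(v_0,O)$, so that the contraction is truly a homotopy of $\lk(v_0,O)$ through itself. Any point on this geodesic is proportional to $\alpha w+\beta u$ with $\alpha,\beta\geq 0$ not both zero, and in the $u_j$-coordinates this is the nonnegative combination $\sum_j(\alpha' s_j+\beta')u_j$ for suitable $\alpha',\beta'\geq 0$. The sequence $\alpha' s_j+\beta'$ remains weakly decreasing and nonnegative, so by the parametrization lemma the normalized vector again belongs to $\lk(v_0,O)$. Thus hemispheric contraction to $u$ restricts to a contraction of $\lk(v_0,O)$, and by Definition~\ref{def:hemi-contract} this restricted homotopy is monotonic. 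The case $v=v_n$ is identical after reversing the orthoscheme, since $(-u_n,-u_{n-1},\ldots,-u_1)$ is again an orthogonal frame exhibiting $O$ as an orthoscheme with $v_n$ as its first endpoint.
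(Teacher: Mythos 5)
Your proof is correct and takes essentially the same route as the paper: place $O$ along its orthogonal frame with $v=v_0$ at the origin, show $\lk(v_0,O)$ lies in the open hemisphere with pole $u$ via positivity of inner products with $u$, and conclude that hemispheric contraction restricts to the link and is therefore monotonic there. The only difference is cosmetic: where the paper appeals in one line to the fact that $\lk(v,O)$ is a convex spherical polytope containing $u$, you verify the same fact explicitly through the weakly decreasing coefficient parametrization, checking that the geodesic from any point to $u$ stays in the link.
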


\begin{proof}
  Let $O=\ortho(v_0,\ldots,v_n)$ and let $v=v_0$.  Without loss of
  generality arrange $O$ in $\R^n$ so that $v_0$ is the origin and
  each vector $v_i - v_{i-1}$ is a positive scalar multiple of a
  standard basis vector.  In this coordinate system all of $O$ lies in
  the nonnegative orthant and the coordinates of $v_n$ are strictly
  positive.  In particular the convex spherical polytope $\lk(v,O)$ is
  contained in an open hemisphere of $\sph^{n-1}$ with $u$ as its
  pole, where $u$ is the unit vector pointing towards $v_n$.  This is
  because an all positive vector such as $u$ and any nonzero
  nonnegative vector have a positive dot product, making the angle
  between them acute.  Finally, hemispheric contraction to $u$
  monotonically contracts $\lk(v,O)$ because $\lk(v,O)$ is a convex
  set containing $u$.
\end{proof}

The orthogonality embedded in the definition of an orthoscheme causes
its face links to decompose into spherical joins.  As a warm-up for
the general statement, we consider links of vertices and edges in
orthoschemes.

\begin{exmp}[Vertex links in orthoschemes]
  Let $O=\ortho(v_0,v_1,\ldots,v_n)$ be an orthoscheme, let $v=v_k$ be
  a vertex with $0<k<n$, and consider suborthoschemes $K=O(0,k)$ and
  $L=O(k,n)$.  The affine subspaces containing $K$ and $L$ are
  orthogonal to each other and the original orthoscheme is the convex
  hull of these two faces.  Thus $\lk(v,O) = \lk(v,K) \ast \lk(v,L)$.
  In fact, this formula remains valid when $k=0$ (or $k=n$) since then
  $K$ (or $L$) has only a single point, its link is empty and this
  factor drops out of the spherical join since $\emptyset$ is the
  identity of the spherical join operation.  Finally, note that the
  factors are endpoint links of the suborthoschemes $K$ and $L$.
\end{exmp}

\begin{exmp}[Edge links in orthoschemes]
  For each $k\leq \ell$ consider the link of the edge $e_{k\ell}$
  connecting $v_k$ and $v_\ell$ in $O=\ortho(v_0,v_1,\ldots,v_n)$.  If
  we define $K = O(0,k)$, $L = O(k,\ell)$ and $M= O(\ell,n)$ then we
  claim that $\lk(e_{k\ell},O) = \lk(v_k,K) \ast \lk(e_{k\ell},L) \ast
  \lk(v_\ell,M)$.  To see this note that the linear subspaces
  corresponding to the affine spans of $K$, $L$ and $M$ form an
  orthogonal decomposition of $\R^n$ and, as a consequence, any vector
  can be uniquely decomposed into three orthogonal components.  It is
  then straightforward to see that a vector perpendicular to
  $e_{k\ell}$ points into $O$ iff its components live in the specified
  links.  The first and last factors are local endpoint links and the
  middle factor is a local diagonal link.  As above, the first factors
  drops out when $k=0$, the last factor drops out when $\ell=n$, but
  also note that the middle factor drops out when $k$ and $\ell$ are
  consecutive since the diagonal link of $O(k,k+1)$ is empty.
\end{exmp}

We are now ready for the precise general statement.

\begin{prop}[Links in orthoschemes]\label{prop:os-links}
  Let $O=\ortho(v_0,v_1,\ldots,v_n)$ and let $\sigma$ be a
  $k$-dimensional face with vertices $\{v_{x_0}, v_{x_1}, \ldots,
  v_{x_k}\}$ where $0\leq x_0 < x_1 < \cdots <x_k \leq n$.  The link
  of $\sigma$ in $O$ is a spherical join of two endpoint links and $k$
  diagonal links of suborthoschemes.  More specifically, \[\lk(\sigma,
  O) = K_\zero \ast L_0 \ast L_1 \ast \cdots \ast L_k \ast K_\one\]
  where $K_\zero = \lk(v_{x_0}, O(\zero, x_0))$ and $K_\one =
  \lk(v_{x_k}, O(x_k, \one))$ are local endpoint links, and each $L_i
  = \lk(e_{x_{i-1}x_i}, O(x_{i-1}, x_i))$ is a local diagonal link.
\end{prop}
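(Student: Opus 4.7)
The plan is to verify the decomposition directly in coordinates by exploiting the orthogonal structure built into the definition of $O$. Place $v_0$ at the origin and choose coordinates so that each vector $v_i - v_{i-1}$ is a positive multiple of the standard basis vector $e_i$; after rescaling the axes we may assume $O = \{y \in \R^n : 1 \geq y_1 \geq y_2 \geq \cdots \geq y_n \geq 0\}$. The chain $x_0 < x_1 < \cdots < x_k$ partitions the coordinate indices $\{1, \ldots, n\}$ into blocks $B_0 = [1, x_0]$, $B_j = [x_{j-1}+1, x_j]$ for $1 \leq j \leq k$, and $B_{k+1} = [x_k+1, n]$, which gives an orthogonal decomposition $\R^n = V_0 \oplus V_1 \oplus \cdots \oplus V_{k+1}$ into the spans of the corresponding coordinate axes; each of the suborthoschemes $O(\zero, x_0)$, $O(x_{j-1}, x_j)$, and $O(x_k, \one)$ lies in an affine translate of the appropriate $V_j$.

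Next I would analyze the tight constraints at an interior point $p$ of $\sigma$. Such a $p$ has coordinates that are constant on each block: $y_i = 1$ on $B_0$, $y_i = \alpha_j$ on $B_j$ for some strictly decreasing sequence $1 > \alpha_1 > \cdots > \alpha_k > 0$, and $y_i = 0$ on $B_{k+1}$. The defining inequalities of $O$ that are tight at $p$ are precisely the internal inequalities $y_{i-1} \geq y_i$ with $i-1$ and $i$ lying in the same block, together with $1 \geq y_1$ when $B_0 \neq \emptyset$ and $y_n \geq 0$ when $B_{k+1} \neq \emptyset$; the inequalities between consecutive blocks are strict. Consequently, the tangent cone to $O$ at $p$ decomposes as an orthogonal product along the blocks, with each factor equal to the tangent cone of the appropriate suborthoscheme at the appropriate point: $O(\zero, x_0)$ at its endpoint $v_{x_0}$, $O(x_{j-1}, x_j)$ at the interior point $p|_{V_j}$ of its diagonal, and $O(x_k, \one)$ at its endpoint $v_{x_k}$.

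Perpendicularity to $\sigma$ also decouples along the same decomposition: the tangent space of $\sigma$ at $p$ is spanned by the vectors $v_{x_j} - v_{x_{j-1}}$ for $1 \leq j \leq k$, each of which lies entirely in the diagonal direction of its $V_j$, so $u \perp \sigma$ iff every middle component $u_j$ is perpendicular to the diagonal of $O(x_{j-1}, x_j)$. Imposing this condition on each middle factor cuts its tangent cone down to exactly the diagonal link $L_j$, while the extreme factors coincide with $K_\zero$ and $K_\one$ and are automatically perpendicular to $\sigma$. Normalizing to unit length then recovers the standard parametrization of the spherical join $K_\zero \ast L_1 \ast \cdots \ast L_k \ast K_\one$, completing the identification with $\lk(\sigma, O)$.

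The main obstacle is the careful bookkeeping in the second step: matching block-by-block the tight constraints of $O$ at $p$ with the defining inequalities of the appropriate endpoint or diagonal link, and then handling the boundary cases $x_0 = 0$, $x_k = n$, or $x_j - x_{j-1} = 1$, in each of which the corresponding factor is the empty complex and drops out of the join by the convention that $\emptyset$ is the identity for spherical join.
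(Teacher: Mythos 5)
Your proof is correct and follows essentially the same route as the paper's: an orthogonal block decomposition of $\R^n$ determined by the chain, followed by the check that a vector perpendicular to $\sigma$ lies in the tangent cone of $O$ at an interior point of $\sigma$ iff its block components lie in the listed endpoint and diagonal links, with empty factors dropping out of the join. One small caution: independently rescaling the axes to reach the standard orthoscheme is not an isometry, so for the general (non-standard) orthoscheme of the statement you should simply carry the scale factors through the inequalities --- your tight-constraint and perpendicularity bookkeeping uses only the orthogonality of the coordinate directions, not their lengths, and goes through unchanged.
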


\begin{proof}
  The full proof is basically the same as the one given above for edge
  links.  Orthogonally decompose $\R^n$ into linear subspaces
  corresponding to the affine hulls of $O(\zero,x_0)$, $O(x_k,\one)$
  and $O(x_{i-1},x_i)$ for each $i\in [k]$, then check that a vector
  perpendicular to $\sigma$ points into $O$ iff its components live in
  the listed links.  In an orthogonal basis compatible with the
  orthogonal decomposition that contains the local diagonal directions
  along each $e_{x_{i-1}x_i}$ this conclusion is immediate.
\end{proof}

Coxeter's interest in these shapes is related to the observation that
the barycentric subdivision of a regular polytope decomposes it into
isometric orthoschemes.  These orthoschemes are fundamental domains
for the action of its isometry group and correspond to the chambers of
its Coxeter complex.  For example, a barycentric subdivision of the
$3$-cube of side length~$2$ partitions it into $48$ standard
$3$-orthoschemes and the barycentric subdivision of an $n$-cube of
side length~$2$ produces $n!\cdot 2^n$ standard $n$-orthoschemes (see
Figure~\ref{fig:cube}).  These cube decompositions also make it easy
to identify the shape of the endpoint link and the diagonal link in a
standard orthoscheme.

\begin{figure}
\includegraphics[width=3in]{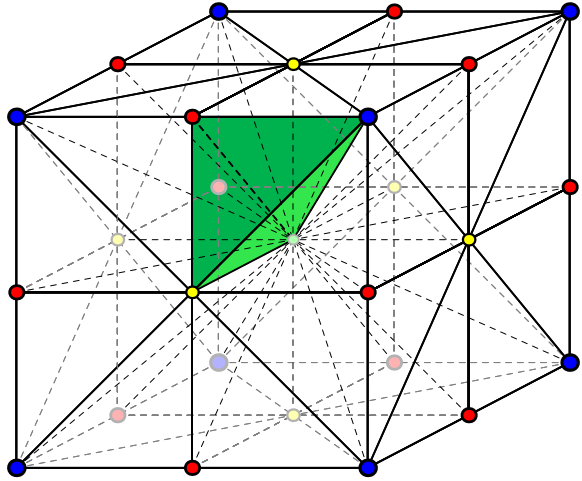}
\caption{Barycentric subdivision of a $3$-cube of side length~$2$ into
  $48$ standard $3$-orthoschemes.\label{fig:cube}}
\end{figure}

\begin{defn}[Coxeter shapes]\label{defn:special}
  Let $O=\ortho(v_0,v_1,\ldots,v_n)$ be a standard $n$-orthoscheme.
  The links $\lk(v_0,O)$ and $\lk(v_n,O)$, are isometric to each other
  and we call this common shape $\beta_n$ because it is a spherical
  simplex known as the \emph{Coxeter shape of type $B_n$}.  The type
  $B_n$ Coxeter group is the isometry group of the $n$-cube and the
  barycentric subdivision of the $n$-cube mentioned above shows that
  $\beta_n$ is its Coxeter shape, i.e. a fundamental domain for the
  action of this group on the sphere.  In low dimensions, $\beta_0 =
  \emptyset$, $\beta_1$ is a point, $\beta_2$ is an arc of length
  $\frac{\pi}{4}$ and $\beta_3$ is a spherical triangle with angles
  $\frac{\pi}{2}$, $\frac{\pi}{3}$, and $\frac{\pi}{4}$.  Similarly,
  the link of the diagonal connecting $v_0$ and $v_n$ in $O$ is a
  shape that we call $\alpha_{n-1}$ because it is the spherical
  simplex known as the \emph{Coxeter shape of type $A_{n-1}$}.  The
  $A_{n-1}$ Coxeter group is the symmetric group $\sym_n$, the
  isometry group of the regular $(n-1)$-simplex and also the
  stabilizer of a vertex inside the isometry group of the $n$-cube.
  In low dimensions, $\alpha_0 = \emptyset$, $\alpha_1$ is a point,
  $\alpha_2$ is an arc of length $\frac{\pi}{3}$ and $\alpha_3$ is a
  spherical triangle with angles $\frac{\pi}{2}$, $\frac{\pi}{3}$, and
  $\frac{\pi}{3}$.
\end{defn}

The following corollary of Proposition~\ref{prop:os-links} is now immediate.

\begin{cor}[Links in standard orthoschemes]\label{cor:ortho-links}
  The link of a face of a standard orthoscheme is a spherical join of
  spherical simplices each of type $A$ or type $B$.  More
  specifically, in the notation of Proposition~\ref{prop:os-links},
  $K_\zero$ has shape $\beta_{x_0}$, $K_\one$ has shape
  $\beta_{n-x_n}$ and $L_i$ has shape $\alpha_j$ with $j=x_{i+1}-x_i -
  1$.
\end{cor}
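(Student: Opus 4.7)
The plan is to derive this corollary as a direct consequence of Proposition~\ref{prop:os-links} combined with Definition~\ref{defn:special}, after one elementary observation about the structure of suborthoschemes.

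First I would invoke Proposition~\ref{prop:os-links} to write $\lk(\sigma, O)$ as the spherical join of two endpoint links and $k$ diagonal links, each of a consecutive suborthoscheme of the form $O(i,j)$, where the pairs $(i,j)$ are drawn from the extended chain $0 \leq x_0 < x_1 < \cdots < x_k \leq n$. This already reduces the problem of identifying the shape of $\lk(\sigma,O)$ to that of identifying the shape of an endpoint or diagonal link of each consecutive suborthoscheme.

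The key intermediate observation is that every such consecutive suborthoscheme $O(i,j)$ of a standard $n$-orthoscheme $O=\ortho(v_0,\ldots,v_n)$ is itself a standard $(j-i)$-orthoscheme. This is immediate from the definitions: if the difference vectors $u_p = v_p - v_{p-1}$ for $p \in [n]$ form an orthonormal basis of $\R^n$, then the subcollection $\{u_{i+1}, \ldots, u_j\}$ is an orthonormal basis of the affine span of $O(i,j)$, so $O(i,j)$ satisfies the defining condition of a standard orthoscheme in its own ambient subspace.

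Given this observation the identifications are automatic. The endpoint link $K_\zero = \lk(v_{x_0}, O(0,x_0))$ is the endpoint link of a standard $x_0$-orthoscheme, which by Definition~\ref{defn:special} has shape $\beta_{x_0}$; similarly $K_\one = \lk(v_{x_k}, O(x_k,n))$ is the endpoint link of a standard $(n-x_k)$-orthoscheme and hence has shape $\beta_{n-x_k}$. Each diagonal link $L_i$ is the diagonal link of a standard $(x_{i+1}-x_i)$-orthoscheme, which Definition~\ref{defn:special} identifies as $\alpha_{x_{i+1}-x_i-1}$. There is essentially no serious obstacle: all the real content already lives in Proposition~\ref{prop:os-links} and in the identification of the Coxeter shapes $\alpha_m$ and $\beta_m$, so the proof of the corollary is simply an assembly of those pieces together with the standardness of suborthoschemes.
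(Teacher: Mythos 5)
Your proposal is correct and matches the paper's intent exactly: the paper declares the corollary immediate from Proposition~\ref{prop:os-links} together with Definition~\ref{defn:special}, which is precisely the assembly you carry out. Your only addition is to make explicit the (correct) observation that a face $O(i,j)$ spanned by consecutive vertices of a standard orthoscheme is itself a standard orthoscheme, a fact the paper leaves implicit.
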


As an illustration, the link of the tetrhedron with corners $v_3$,
$v_6$, $v_7$ and $v_{12}$ in a standard $12$-orthoscheme is isometric
to $\beta_3 \ast \alpha_2 \ast \alpha_0 \ast \alpha_4 \ast \beta_0 =
\beta_3 \ast \alpha_2 \ast \alpha_4$.  Finally we record a few results
about lengths and angles in orthoscheme links that are needed later in
the article.

\begin{prop}[Edge lengths]\label{prop:edge-length}
  The link of the diagonal of a standard $n$-orthoscheme $O =
  \ortho(v_0,\ldots,v_n)$ is a spherical simplex of shape
  $\alpha_{n-1}$ whose vertices correspond to the $v_i$ with $0<i<n$.
  Moreover, if $i$, $j$, and $k$ are positive integers with $i+j+k=n$
  and $\theta$ is the length of the edge connecting the vertex of rank
  $i$ and the vertex of corank $k$ in $\alpha_{n-1}$, then $0 < \theta
  < \frac{\pi}{2}$ and $\cos(\theta) = \sqrt{ \frac{i}{i+j}\cdot
    \frac{k}{j+k}}$.
\end{prop}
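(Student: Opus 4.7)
My plan is a direct coordinate computation. First, I would place the standard $n$-orthoscheme $O$ in $\R^n$ with $v_0$ at the origin and $v_i = e_1 + e_2 + \cdots + e_i$ for $0 \leq i \leq n$, where $\{e_1,\ldots,e_n\}$ is the standard orthonormal basis; the successive difference vectors are then exactly the standard basis, so this realizes $O$ as a standard orthoscheme. The diagonal runs from $v_0$ to $v_n = (1,1,\ldots,1)$, whose unit direction is $u = v_n/\sqrt{n}$.

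Next, to identify the vertices of $\alpha_{n-1} = \lk(e_{0n},O)$, I would observe that a vertex of this link corresponds to a triangle of $O$ containing the diagonal, i.e.\ to a face $\ortho(v_0,v_i,v_n)$ for some $0 < i < n$. The corresponding unit vector $w_i \in \lk(e_{0n},O)$ is the normalized component of $v_i$ perpendicular to $u$. A short calculation shows that the orthogonal projection of $v_i$ onto the diagonal line is $(i/n)v_n$, and that
\[
\bigl\lVert v_i - (i/n)v_n\bigr\rVert^2 = i\cdot\Bigl(\tfrac{n-i}{n}\Bigr)^2 + (n-i)\cdot\Bigl(\tfrac{i}{n}\Bigr)^2 = \tfrac{i(n-i)}{n},
\]
so $w_i = (v_i - (i/n)v_n)/\sqrt{i(n-i)/n}$. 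This identifies $\alpha_{n-1}$ as a spherical $(n{-}2)$-simplex with vertices $w_1,\ldots,w_{n-1}$, proving the first sentence of the proposition.

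For the edge-length formula, write the vertex of rank $i$ as $w_i$ and the vertex of corank $k$ as $w_{n-k} = w_{i+j}$, using $i+j+k = n$. The spherical distance $\theta$ between them on the unit sphere is $\arccos\langle w_i, w_{i+j}\rangle$. Using $\langle v_a, v_b\rangle = \min(a,b)$ and $\langle v_n, v_n\rangle = n$, the numerator works out to
\[
\bigl\langle v_i - \tfrac{i}{n}v_n,\; v_{i+j} - \tfrac{i+j}{n}v_n\bigr\rangle = i - \tfrac{i(i+j)}{n} = \tfrac{i k}{n},
\]
while the norms are $\sqrt{i(j+k)/n}$ and $\sqrt{(i+j)k/n}$. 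Dividing gives
\[
\cos\theta = \frac{ik/n}{\sqrt{ik(i+j)(j+k)}/n} = \sqrt{\frac{i}{i+j}\cdot\frac{k}{j+k}},
\]
as claimed. Finally, since $i,j,k$ are all strictly positive, we have $0 < i < i+j$ and $0 < k < j+k$, so $0 < \cos\theta < 1$ and hence $0 < \theta < \pi/2$.

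There is no real obstacle here; the only step that requires a bit of care is choosing clean coordinates at the start so that the inner products $\langle v_a, v_b\rangle = \min(a,b)$ collapse the computation, and keeping track of which index corresponds to ``rank'' and which to ``corank'' when translating the formula.
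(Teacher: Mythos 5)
Your proof is correct and is essentially the same argument as the paper's: both compute the angle between the components of $v_i$ and $v_{i+j}$ perpendicular to the diagonal (your $n(v_i - \tfrac{i}{n}v_n)$ is exactly the paper's projected vector $((j+k)^i,(-i)^{j+k})$), with the same dot-product simplification yielding $\cos^2\theta = \frac{i}{i+j}\cdot\frac{k}{j+k}$. The only cosmetic difference is that you expand inner products via $\langle v_a,v_b\rangle=\min(a,b)$ rather than writing the projected coordinates explicitly.
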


\begin{proof}
  Consider the triangle with vertices $v_0$, $v_i$ and $v_{n-k} =
  v_{i+j}$.  If we project this triangle onto the hyperplane
  perpendicular to the edge $e_{0n}$, then the angle at $v_0$ in the
  projected triangle is the length of the corresponding edge in
  $\alpha_{n-1}$.  Let $u$ be the projection of $e_{0i}$ and let $v$
  be the projection of $e_{0(n-k)}$ (with both multiplied by $n$ to
  clear the denominators).  In coordinates $u = ((j+k)^i,(-i)^{j+k})$
  and $v = (k^{i+j},(-i-j)^k)$.  Here we are using Conway's exponent
  notation to simplify vector expressions.  In words the first $i$
  coordinates of $u$ are $j+k$ and the remaining $j+k$ coordinates are
  $-i$.  The dot products simplify as follows: $u \cdot v = i.k.n$
  while $u \cdot u = i.(j+k).n$ and $v \cdot v = (i+j).k.n$.  Thus
  \[ \cos^2(\theta) = \frac{(u\cdot v)(u\cdot v)}{(u\cdot u)(v\cdot
    v)} = \frac{(i.k.n)(i.k.n)}{(i.(j+k).n)((i+j).k.n)} =
  \frac{i}{i+j}\cdot\frac{k}{j+k}. \]
\end{proof}

\begin{cor}[Spherical triangles]\label{cor:triangles}
  Let $\ortho(v_0,\ldots,v_n)$ be a standard $n$-orthoscheme.  For
  each $0 < x < y < z < n$, the diagonal link of the suborthoscheme
  $\ortho(v_0,v_x,v_y,v_z,v_n)$ is a spherical triangle with acute
  angles at $v_x$ and $v_z$ and a right angle at $v_y$.
\end{cor}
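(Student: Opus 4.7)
The plan is to compute the three side lengths of the triangle via Proposition~\ref{prop:edge-length} and then read off the angles from the spherical law of cosines. First I would identify the diagonal link of the suborthoscheme with the triangular face of $\alpha_{n-1}$ spanned by the vertices corresponding to $v_x$, $v_y$, $v_z$. The diagonal of $\ortho(v_0,v_x,v_y,v_z,v_n)$ coincides with the diagonal $e_{0n}$ of $O$, and the orthogonal projection onto the hyperplane perpendicular to $e_{0n}$ collapses both $v_0$ and $v_n$ to a single point while sending $v_x$, $v_y$, $v_z$ to three distinct points whose radial normalizations are precisely the three named vertices of $\alpha_{n-1}$; thus the diagonal link of the suborthoscheme is the spherical triangle in $\alpha_{n-1}$ on these three vertices.

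Next I would apply Proposition~\ref{prop:edge-length} three times. Writing $\theta_{xy}$, $\theta_{yz}$, and $\theta_{xz}$ for the lengths of the sides opposite to $v_z$, $v_x$, and $v_y$ respectively, the formula yields
\[
\cos^2(\theta_{xy})=\tfrac{x}{y}\cdot\tfrac{n-y}{n-x},\qquad
\cos^2(\theta_{yz})=\tfrac{y}{z}\cdot\tfrac{n-z}{n-y},\qquad
\cos^2(\theta_{xz})=\tfrac{x}{z}\cdot\tfrac{n-z}{n-x}.
\]
The key observation is that the middle factors telescope, so that $\cos(\theta_{xy})\cos(\theta_{yz})=\cos(\theta_{xz})$. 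All three cosines are positive because Proposition~\ref{prop:edge-length} places every side strictly between $0$ and $\tfrac{\pi}{2}$, so no sign ambiguity arises when taking square roots.

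Feeding this identity into the spherical law of cosines at $v_y$,
\[
\cos(\theta_{xz}) = \cos(\theta_{xy})\cos(\theta_{yz}) + \sin(\theta_{xy})\sin(\theta_{yz})\cos(\angle v_y),
\]
immediately forces $\cos(\angle v_y)=0$, so the angle at $v_y$ is a right angle. Running the same law at $v_x$ and substituting the telescoping identity gives
\[
\cos(\angle v_x) \;=\; \frac{\cos(\theta_{yz})\,\sin(\theta_{xy})}{\sin(\theta_{xz})},
\]
which is strictly positive and so certifies an acute angle at $v_x$; the case of $v_z$ is symmetric. The only real difficulty is the bookkeeping of which side is opposite which vertex when invoking the law of cosines, since once the multiplicative identity is spotted the angle computations essentially write themselves.
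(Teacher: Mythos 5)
Your proposal is correct and follows essentially the same route as the paper: read off the three side lengths from Proposition~\ref{prop:edge-length}, note the telescoping identity $\cos(\theta_{xy})\cos(\theta_{yz})=\cos(\theta_{xz})$, and feed it into the spherical law of cosines to get the right angle at $v_y$ and positive cosines (hence acute angles) at $v_x$ and $v_z$. The only difference is that you carry out explicitly the acute-angle computation that the paper dismisses as ``a similar but messier computation,'' and your formula $\cos(\angle v_x)=\cos(\theta_{yz})\sin(\theta_{xy})/\sin(\theta_{xz})$ checks out.
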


\begin{proof}
  From Proposition~\ref{prop:edge-length} the lengths of the edges of the
  spherical triangle are known explicitly and the angle assertions
  follow from the standard spherical trigonometry.  For example, if we
  select positive integers $i+j+k+l=n$ such that $v_x$ has rank $i$,
  $v_y$ has rank $i+j$ and $v_z$ has rank $i+j+k$, then
  $\cos\order{e_{xy}} = \sqrt{\frac{i}{i+j} \cdot \frac{k+l}{j+k+l}}$,
  $\cos\order{e_{yz}} = \sqrt{\frac{i+j}{i+j+k} \cdot \frac{l}{k+l}}$
  and $\cos\order{e_{xz}} = \sqrt{\frac{i}{i+j+k} \cdot
    \frac{l}{j+k+l}}$.  From the equality $\cos\order{e_{xy}}\cdot
  \cos\order{e_{yz}} = \cos\order{e_{xz}}$ and the spherical law of
  cosines we infer that the angle at $v_y$ is a right angle.  The
  acute angle conclusion involves a similar but messier computation.
\end{proof}

\section{Orthoscheme complexes}

In this section we use orthoschemes to turn order complexes into \pe
complexes. Although every simplicial complex can be turned into a \pe
complex by making simplices regular and every edge length~$1$, the
curvature properties of the result tend to be hard to evaluate.  For
order complexes of graded posets orthoschemes are a better option.

\begin{defn}[Orthoscheme complex]
  The \emph{orthoscheme complex} of a graded poset $P$ is a metric
  version of its order complex $\order{P}$ that assigns every top
  dimensional simplex in $\order{P}$ (i.e. those corresponding to
  maximal chains $x_0 < x_1 < \cdots < x_n$) the metric of a standard
  orthoscheme with $v_{x_i}$ corresponding to $v_i$.  As a result, for
  all elements $x < y$ in $P$, the length of the edge connecting $v_x$
  and $v_y$ in $\order{P}$ is $\sqrt{k}$ where $k$ is the rank of
  $P_{xy}$.  When $\order{P}$ is turned into a \pe complex in this way
  we say that $\order{P}$ is endowed with the \emph{orthoscheme
    metric}.  Unless otherwise specified, from now on $\order{P}$
  indicates an orthoscheme complex, i.e. an order complex with the
  orthoscheme metric.
\end{defn}

One reason for using this particular metric on the order
complex of a poset is that it turns standard examples of posets into
metrically interesting \pe complexes.

 \begin{figure}
 \begin{tabular}{ccc}
 \bt{c}\includegraphics{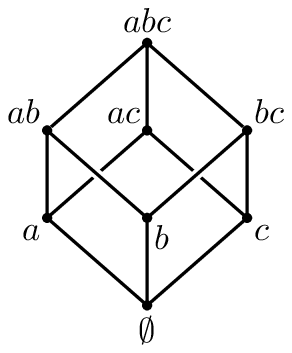}\et & \hspace{.5in}
 &\bt{c}\includegraphics{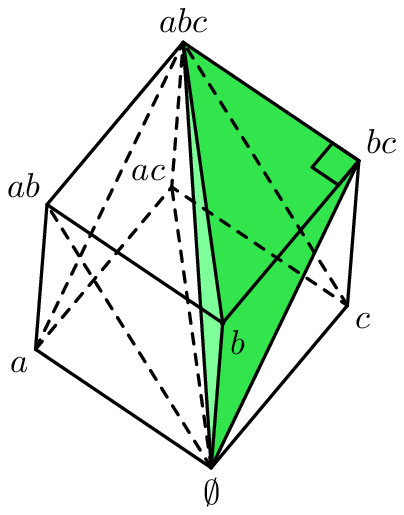}\et 
 \end{tabular}
 \caption{The rank~$3$ boolean lattice and its unit $3$-cube
   orthoscheme complex.  The orthoscheme from the chain $\emptyset
   \subset \{b\} \subset \{b,c\} \subset \{a,b,c\}$ is
   shaded.\label{fig:boolean}}
 \end{figure}

\begin{exmp}[Boolean lattices]
  The \emph{rank $n$ boolean lattice} is the poset of all subsets of
  $[n] := \{1,2,\ldots,n\}$ ordered by inclusion.  The orthoscheme
  complex of a boolean lattice is a subdivided unit $n$-cube (or one
  orthant of the barycentric subdivision of the $n$-cube of side
  length~$2$ described earlier), its endpoint link is the barycentric
  subdivision of an all-right spherical simplex tiled by simplices of
  shape $\beta_n$ and its diagonal link is a subdivided sphere tiled
  by simplices of shape $\alpha_{n-1}$.  See Figure~\ref{fig:boolean}.
\end{exmp}

That the orthoscheme complex of a boolean lattice is a cube can be
explained by a more general fact about products.

\begin{rem}[Products]
  A product of posets produces an orthoscheme complex that is a
  product of metric spaces.  In particular, if $P$ and $Q$ are bounded
  graded posets, then $\order{P \times Q}$ and $\order{P} \times
  \order{Q}$ are isometric.  The product on the left is a product of
  posets and the product on the right is a product of metric spaces.
  Since finite boolean lattices are poset products of two element
  chains, their orthoscheme complexes are, up to isometry, products of
  unit intervals, i.e. cubes.
\end{rem}

Cube complexes produce a second family of examples.

\begin{exmp}[Cube complexes]
  Let $K$ be a cube complex scaled so that every edge has length $2$.
  The face poset of $K$ is a graded poset $P$ whose intervals are
  boolean lattices.  The orthoscheme complex $\order{P}$ is isometric
  to the cube complex $K$.  In other words, the metric barycentric
  subdivision of an arbitrary cube complex is identical to the
  orthoscheme complex of its face poset.
\end{exmp}

A third family of examples shows that there are interesting $\cat(0)$
orthoscheme complexes unrelated to cubes and cube complexes.

\begin{exmp}[Linear subspace posets]\label{exmp:linear}
  The $n$-dimensional \emph{linear subspace poset} over a field $\F$
  is the poset $L_n(\F)$ of all linear subspaces of the
  $n$-dimensional vector space $\F^n$ ordered by inclusion.  It's
  basic properties are explored in Chapter~$3$ of \cite{EC1}. The
  poset $L_n(\F)$ is bounded above by $\F^n$ and below by the trivial
  subspace and it is graded by the dimension of the subspace an
  element of $L_n(\F)$ represents.  It turns out that the orthoscheme
  complex of $L_n(\F)$ is a $\cat(0)$ space and its diagonal link is a
  standard example of a thick spherical building of type $A_{n-1}$.
  The smallest nontrivial example, with $\F = \Z_2$ and $n=3$, is
  illustrated in Figure~\ref{fig:linear} along with its diagonal link.
  The middle levels of $L_3(\Z_2)$ correspond to the $7$ points and
  $7$ lines of the projective plane of order $2$ and its diagonal link
  is better known as the Heawood graph, or as the incidence graph of
  the Fano plane.
\end{exmp}

\begin{figure}
\begin{tabular}{cc}
\bt{c}\includegraphics{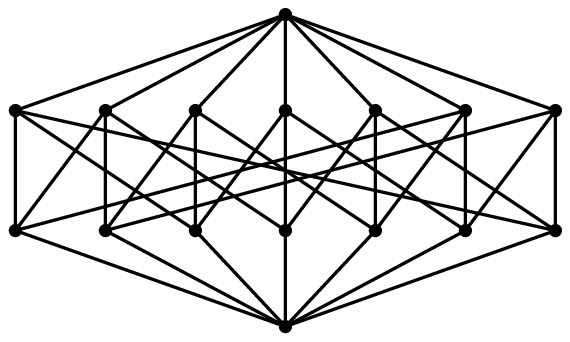}\et
&\bt{c}\includegraphics{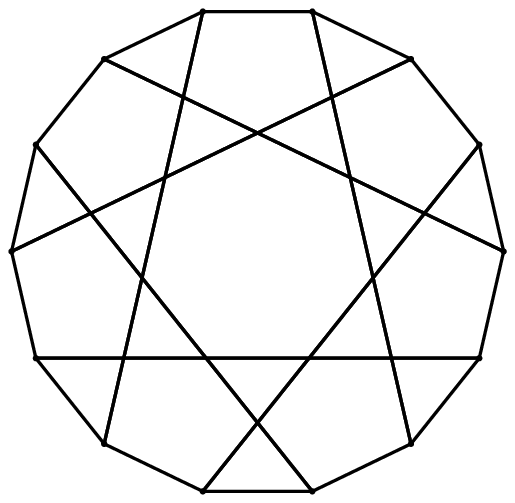}\et
\end{tabular}
\caption{The poset $L_3(\Z_2)$ and its diagonal link.  In the \ps
  complex on the right, every edge in the graph should be viewed as a
  spherical arc of length $\frac{\pi}{3}$.\label{fig:linear}}
\end{figure}

With these examples in mind, we now turn to the question of when the
orthoscheme complex of a bounded graded poset $P$ is a $\cat(0)$ \pe
complex.  The first step is to examine some of its more elementary
links.

\begin{defn}[Elementary links]
  Let $P$ be a bounded graded poset of rank~$n$.  Three links in the
  orthoscheme complex $\order{P}$ are of particular interest.  The \ps
  complexes $\lk(v_\zero,\order{P})$ and $\lk(v_\one,\order{P})$ are
  the \emph{endpoint links} of $\order{P}$ and $\lk(e_{\zero\one},
  \order{P})$ is its \emph{diagonal link}.  The endpoint links are \ps
  complex built out of copies of $\beta_n$ and the diagonal link is a
  \ps complex built out of copies of $\alpha_{n-1}$.  In fact,
  $\lk(v_\zero,\order{P})$ is the simplicial complex
  $\order{P\setminus \{\zero\}}$ with a \ps $\beta_n$ metric applied
  to each maximal simplex.  Similarly, $\lk(e_{\zero\one},\order{P})$
  is $\order{P\setminus \{\zero,\one\}}$ with an $\alpha_{n-1}$ metric
  on each maximal simplex.  Collectively these three links are the
  \emph{elementary links} of the orthoscheme complex $\order{P}$.
  Note that endpoint links are empty when $P$ has rank~$0$, and the
  diagonal links are empty when $P$ has rank~$1$.  This corresponds to
  the fact that $\beta_0 = \alpha_0 = \emptyset$.
\end{defn}

In order to determine whether an orthoscheme complex is $\cat(0)$, we
need to understand the structure of the link of an arbitrary simplex.
We do this by showing that the link of an arbitrary simplex decomposes
as a spherical join of local elementary links.  This decomposition is
based on the decomposition described in Corollary~\ref{cor:ortho-links} and
is only possible because of the orthogonality built into the
definition of an orthoscheme.

\begin{prop}[Links in orthoscheme complexes]\label{prop:osc-links}
  Let $P$ be a bounded graded poset.  If $x_0 < x_1 < \cdots < x_k$ is
  a chain in $P$ and $\sigma$ is the corresponding simplex in its
  orthoscheme complex, then $\lk(\sigma, \order{P})$ is a spherical
  join of two local endpoint links and $k$ local diagonal links.  More
  specifically, \[\lk(\sigma, \order{P}) = K_\zero \ast L_0 \ast L_1
  \ast \cdots \ast L_k \ast K_\one\] where $K_\zero = \lk(v_{x_0},
  \order{P(\zero, x_0)})$ and $K_\one = \lk(v_{x_k}, \order{P(x_k,
    \one)})$ are local endpoint links, and each $L_i =
  \lk(e_{x_{i-1}x_i}, \order{P(x_{i-1},x_i)})$ is a local diagonal
  link.
\end{prop}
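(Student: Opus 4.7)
The plan is to deduce this from Proposition~\ref{prop:os-links} by observing that the link of a cell in a \pe complex is glued from the links of that cell inside each maximal cell containing it, and that for orthoscheme complexes this gluing factors through the spherical join.

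First I would identify the maximal orthoschemes of $\order{P}$ containing the simplex $\sigma$. Each such orthoscheme $O$ corresponds to a maximal chain $\zero = y_0 < y_1 < \cdots < y_n = \one$ of $P$ that refines the subchain $x_0 < x_1 < \cdots < x_k$. Any such refinement decomposes into independent choices: a maximal chain of $P(\zero, x_0)$, a maximal chain of $P(x_{i-1}, x_i)$ for each $i \in [k]$, and a maximal chain of $P(x_k, \one)$. Thus the maximal orthoschemes containing $\sigma$ are indexed by the product of the sets of maximal chains in these subintervals. Next I would apply Proposition~\ref{prop:os-links} inside each such $O$ to write
\[ \lk(\sigma, O) \;=\; K_\zero^O \ast L_0^O \ast L_1^O \ast \cdots \ast L_k^O \ast K_\one^O, \]
where $K_\zero^O = \lk(v_{x_0}, O(\zero, x_0))$, $K_\one^O = \lk(v_{x_k}, O(x_k, \one))$, and each $L_i^O = \lk(e_{x_{i-1}x_i}, O(x_{i-1}, x_i))$. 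The crucial point is that each factor depends only on the part of the refining chain lying in the corresponding subinterval.

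Finally I would check that the identifications used to build $\lk(\sigma, \order{P})$ from the pieces $\lk(\sigma, O)$ act factor-by-factor on this join decomposition. Two orthoschemes $O$ and $O'$ share a face through $\sigma$ exactly when their refining chains coincide on a subrefinement in each of the subintervals independently, and under such agreement the isometric face identification in $\lk(\sigma,\cdot)$ identifies the corresponding join factors one subinterval at a time. Hence the global gluing is the spherical join of the separate gluings, which by construction yield $\lk(v_{x_0}, \order{P(\zero,x_0)})$, $\lk(e_{x_{i-1}x_i}, \order{P(x_{i-1},x_i)})$, and $\lk(v_{x_k}, \order{P(x_k,\one)})$ respectively. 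The main obstacle is the bookkeeping required to verify that the spherical join structure is preserved by the face-gluing in \ps complexes; this ultimately rests on the fact that the orthogonality of the affine spans of the suborthoschemes $O(x_{i-1}, x_i)$ that underlies Proposition~\ref{prop:os-links} is preserved by the isometric identifications defining $\order{P}$, so the orthogonal decomposition lifts coherently from each orthoscheme to the whole complex.
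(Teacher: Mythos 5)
Your argument is correct and follows essentially the same route as the paper: decompose the link within each maximal orthoscheme via Proposition~\ref{prop:os-links} (the paper invokes its immediate consequence, Corollary~\ref{cor:ortho-links}), and then use the fact that maximal chains extending $x_0 < \cdots < x_k$ are formed by independent choices in the subintervals $P(\zero,x_0)$, $P(x_{i-1},x_i)$ and $P(x_k,\one)$ to see that the gluings respect the join factors. Your extra attention to the factor-by-factor compatibility of the identifications is just a more explicit rendering of the compatibility step the paper states in one sentence.
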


\begin{proof}
  Let $P$ have rank $n$.  Since every chain is contained in a maximal
  chain of length $n$, every simplex containing $\sigma$ is contained
  in a standard $n$-orthoscheme of $\order{P}$.  In particular, the
  link of $\sigma$ in $\order{P}$ is a \ps complex obtained by gluing
  together the link of $\sigma$ in each $n$-orthoscheme that contains
  it.  By Corollary~\ref{cor:ortho-links}, each such link decomposes as
  spherical joins of Coxeter shapes.  Moreover, these decompositions
  are compatible from one $n$-orthoscheme to the next, reflecting the
  fact that every maximal chain extending $x_0 < x_1 < \cdots < x_k$
  is formed by selecting a maximal chain from $P(\zero,x_0)$, a
  maximal chain from $P(x_k,\one)$ and a maximal chain from
  $P(x_{i-1},x_i)$ for each $i \in [k]$ and these choices can be made
  independently of one another.  When the links of $\sigma$ in each
  $n$-orthoscheme are pieced together, the result is the one listed in
  the statement.
\end{proof}

Understanding links thus reduces to understanding elementary links.

\begin{lem}[Endpoint links]\label{lem:endpt-links}
  An endpoint link of a bounded graded poset $P$ is a monotonically
  contractible \ps complex and thus contains no unshrinkable loops. In
  particular, endpoint links cannot be not quite $\cat(1)$.
\end{lem}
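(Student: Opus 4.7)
The plan is to promote the single-orthoscheme hemispheric contraction of Proposition~\ref{prop:endpt-contract} to the entire endpoint link by observing that all the relevant contraction poles coincide, and then to invoke Lemma~\ref{lem:bowditch}. By symmetry it suffices to treat $\lk(v_\zero, \order{P})$. Every top-dimensional cell of this link is a copy of $\beta_n$ coming from a maximal chain $C: \zero = x_0 < x_1 < \cdots < x_n = \one$ of $P$; more precisely, if $\sigma_C$ denotes the top-dimensional standard orthoscheme in $\order{P}$ associated to $C$, then $\lk(v_\zero, \sigma_C) = \beta_n$. On each such cell Proposition~\ref{prop:endpt-contract} provides a monotonic hemispheric contraction onto the pole $u_C$, the unit vector at $v_\zero$ pointing along the diagonal of $\sigma_C$.

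The key observation I would exploit is that the diagonal of every top-dimensional orthoscheme in $\order{P}$ is the single edge $e_{\zero\one}$, so every pole $u_C$ is the same point $u \in \lk(v_\zero, \order{P})$, namely the unit tangent at $v_\zero$ pointing toward $v_\one$. Since the $\beta_n$-cells are glued isometrically along faces and the pole $u$ is a globally well-defined point of the link, whenever two top-dimensional cells share a face their hemispheric contractions agree on the overlap: each moves every point along the unique geodesic toward $u$ at constant speed, and this prescription depends only on the intrinsic metric of the shared face. The individual contractions therefore glue to a single continuous homotopy $H: \lk(v_\zero, \order{P}) \times [0,1] \to \lk(v_\zero, \order{P})$ from the identity to the constant map at $u$. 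Monotonic shrinking of rectifiable curves is checked cell by cell, and every rectifiable curve decomposes into finitely many arcs inside top-dimensional cells, so $H$ is a monotonic contraction in the sense of Definition~\ref{def:mono-contract}.

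Monotonic contractibility immediately implies that every rectifiable loop is shrinkable, so $\lk(v_\zero, \order{P})$ contains no unshrinkable loops. To deduce the final clause, note that if the endpoint link is not locally $\cat(1)$ then it cannot be not quite $\cat(1)$ by definition, while if it is locally $\cat(1)$ then Lemma~\ref{lem:bowditch} says that being not quite $\cat(1)$ would force the existence of a short unshrinkable loop, which we have just excluded. Either way the endpoint link is not not quite $\cat(1)$. The one step that requires care is the consistency of the gluing in the second paragraph, and this reduces to the observation that the pole $u$ is intrinsic to $\lk(v_\zero, \order{P})$ rather than to any particular top-dimensional cell; once that is in hand the argument is essentially mechanical.
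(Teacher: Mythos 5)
Your proposal is correct and follows essentially the same route as the paper: apply the hemispheric contraction of Proposition~\ref{prop:endpt-contract} cell by cell, note that all the poles coincide with the single direction $u$ along the common diagonal $e_{\zero\one}$ so the contractions agree on overlaps and glue to a monotonic contraction, and then rule out the not quite $\cat(1)$ possibility via the unshrinkable-loop characterization of Lemma~\ref{lem:bowditch}. The only difference is that you spell out the gluing consistency and the Bowditch step in more detail than the paper does.
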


\begin{proof}
  Let $v$ be an endpoint of $\order{P}$, let $K = \lk(v,\order{P})$,
  then let $u \in K$ be the unit vector at $v$ pointing along the
  common diagonal of all the orthoschemes of $\order{P}$.  The
  contractions defined in Proposition~\ref{prop:endpt-contract} are
  compatible on their overlaps and jointly define a monotonic
  contraction from $K$ to $u$.  In particular, all loops in $K$
  monotonically shrink under this homotopy.
\end{proof}

\begin{lem}[Diagonal links]\label{lem:lk-interval}
  Let $P$ be a bounded graded poset.  For every $x<y$ in $P$ there is
  a simplex $\sigma$ in $\order{P}$ such that $\lk(\sigma,\order{P})$
  and $\lk(e_{xy},\order{P_{xy}})$ are isometric.
\end{lem}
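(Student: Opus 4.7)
The plan is to exhibit $\sigma$ explicitly and then read off its link using the decomposition already established for links in orthoscheme complexes.

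First I would choose, in $P_{\zero x}$, a maximal chain $\zero = z_0 < z_1 < \cdots < z_r = x$, and in $P_{y\one}$ a maximal chain $y = w_0 < w_1 < \cdots < w_s = \one$. Concatenating these gives a chain
\[
\zero = z_0 < z_1 < \cdots < z_r = x < y = w_0 < w_1 < \cdots < w_s = \one
\]
in $P$, whose corresponding simplex $\sigma$ in $\order{P}$ contains the edge $e_{xy}$ as a face. This $\sigma$ is the candidate simplex.

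Next I would apply Proposition~\ref{prop:osc-links} to $\sigma$. The resulting spherical join has an endpoint factor $K_\zero = \lk(v_\zero, \order{P(\zero,\zero)})$, an endpoint factor $K_\one = \lk(v_\one, \order{P(\one,\one)})$, and one diagonal factor $L_i$ for each pair of consecutive vertices along the chain. The endpoint factors are links of the sole vertex in a rank-$0$ interval, so each is $\beta_0 = \emptyset$. Every consecutive pair coming from the chosen maximal chains spans a rank-$1$ interval of $P$, whose diagonal link is $\alpha_0 = \emptyset$. The only surviving factor is the one for the consecutive pair $(x,y)$, namely $\lk(e_{xy}, \order{P(x,y)}) = \lk(e_{xy}, \order{P_{xy}})$.

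Finally I would invoke that $\emptyset$ is the identity for the spherical join operation on \ps complexes, so the spherical join collapses to its single nonempty factor, giving the isometry
\[
\lk(\sigma, \order{P}) \;=\; \lk(e_{xy}, \order{P_{xy}}).
\]
There is no real obstacle here beyond careful bookkeeping; the only subtlety is to recognize that rank-$0$ and rank-$1$ subintervals contribute empty factors and to use that $\emptyset$ is the join identity so that these factors drop out cleanly.
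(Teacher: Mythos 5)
Your proposal is correct and follows essentially the same route as the paper: choose a chain through $x<y$ that is maximal outside the interval $P_{xy}$ but skips its interior, apply Proposition~\ref{prop:osc-links}, and observe that all factors except $\lk(e_{xy},\order{P_{xy}})$ are empty and drop out of the spherical join. Your version just spells out the bookkeeping (the $\beta_0$ and $\alpha_0$ factors) a bit more explicitly than the paper does.
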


\begin{proof}
  Pick a maximal chain extending $x<y$ and remove the elements
  strictly between $x$ and $y$.  If $\sigma$ is the simplex of
  $\order{P}$ that corresponds to this subchain then by
  Proposition~\ref{prop:osc-links} the link of $\sigma$ is a spherical join
  of $\lk(e_{xy},\order{P_{xy}})$ with other elementary links, all of
  which are empty in this context.  As a consequence
  $\lk(\sigma,\order{P})$ and $\lk(e_{xy},\order{P_{xy}})$ are
  isometric.
\end{proof}

Recall that a \ps complex is large if it has no short local geodesic
loops.  Using the results above, we now show that the curvature of an
orthoscheme complex only depends on whether or not its local diagonal
links are large.

\begin{thm}[Orthoscheme link condition]\label{thm:ortho-curv}
  If $P$ is a bounded graded poset then its orthoscheme complex
  $\order{P}$ is not $\cat(0)$ iff there is a local diagonal link of
  $P$ that is not quite $\cat(1)$.  As a result, $\order{P}$ is
  $\cat(0)$ iff every local diagonal link of $P$ is large.
\end{thm}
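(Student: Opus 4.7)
My plan is to combine the structural curvature results of Section~\ref{sec:curvature} with the join decomposition of orthoscheme links in Proposition~\ref{prop:osc-links}, and then peel off the conclusion using the two lemmas on elementary links.

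For the forward direction of the first biconditional, I would first note that $\order{P}$ is contractible by Proposition~\ref{prop:contractible} (hence connected and simply connected) because $P$ is bounded. If $\order{P}$ is not $\cat(0)$, Proposition~\ref{prop:curv-lks} supplies a cell $\sigma$ such that $\lk(\sigma,\order{P})$ is not quite $\cat(1)$. Proposition~\ref{prop:osc-links} decomposes this link as a spherical join of two local endpoint links and some number of local diagonal links of intervals in $P$, and part~3 of Proposition~\ref{prop:cat1-joins} then forces at least one of the join factors to itself be not quite $\cat(1)$. Lemma~\ref{lem:endpt-links} eliminates the endpoint factors, since they are monotonically contractible and therefore contain no unshrinkable loops, so the offending factor must be a local diagonal link.

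For the converse of the first biconditional, suppose $L$ is a local diagonal link of $P$ that is not quite $\cat(1)$; in particular $L$ is not $\cat(1)$. Lemma~\ref{lem:lk-interval} realizes $L$ as the link of some simplex of $\order{P}$. I would then invoke the standard observation, immediate from iterating Proposition~\ref{prop:lk-lk} and unpacking Definition~\ref{def:curv}, that every cell link in a $\cat(0)$ \pe complex is itself $\cat(1)$: the cell link is large by the nonpositive curvature hypothesis, and its own links are links of larger cells in $\order{P}$, which are large as well. The presence of a non-$\cat(1)$ cell link in $\order{P}$ therefore forbids $\cat(0)$.

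The second biconditional now follows cleanly from the first together with the definition of $\cat(1)$. If $\order{P}$ is $\cat(0)$, every cell link is $\cat(1)$ and hence large, so by Lemma~\ref{lem:lk-interval} every local diagonal link is large. Conversely, if $\order{P}$ fails to be $\cat(0)$, the first biconditional produces a local diagonal link $L$ that is locally $\cat(1)$ but not $\cat(1)$; since locally $\cat(1)$ plus large equals $\cat(1)$ by Definition~\ref{def:curv}, $L$ cannot be large. The main obstacle I anticipate is simply keeping the bookkeeping of (possibly empty) join factors in Proposition~\ref{prop:osc-links} consistent so that the elimination of endpoint links via Lemma~\ref{lem:endpt-links} genuinely leaves a local diagonal link as the culprit; everything else is a direct chain of implications through the stated propositions.
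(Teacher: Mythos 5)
Your proposal is correct and follows essentially the same route as the paper: the forward direction uses contractibility, Proposition~\ref{prop:curv-lks}, the join decomposition of Proposition~\ref{prop:osc-links}, Proposition~\ref{prop:cat1-joins}, and Lemma~\ref{lem:endpt-links} to isolate a not quite $\cat(1)$ local diagonal link, while the converse uses Lemma~\ref{lem:lk-interval} and the fact that cell links in a $\cat(0)$ complex are $\cat(1)$. Your slightly more explicit handling of the second biconditional and of why cell links of a $\cat(0)$ complex are $\cat(1)$ (via Proposition~\ref{prop:lk-lk}) is just an unpacking of what the paper leaves implicit.
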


\begin{proof}
  For each interval $P_{xy}$ there is a simplex $\sigma$ in
  $\order{P}$ so that $\lk(\sigma,\order{P})$ and $\lk(e_{xy},
  \order{P_{xy}})$ are isometric (Lemma~\ref{lem:lk-interval}).  If
  $\order{P}$ is $\cat(0)$, then $\lk(\sigma, \order{P})= \lk(e_{xy},
  \order{P_{xy}})$ is $\cat(1)$.  Conversely, suppose the complex
  $\order{P}$ is not $\cat(0)$ and recall that it is contractible
  (Proposition~\ref{prop:contractible}).  It contains a simplex with a not
  quite $\cat(1)$ link (Proposition~\ref{prop:curv-lks}) that factors as a
  spherical join of local elementary links
  (Proposition~\ref{prop:osc-links}).  Moreover, there is a not quite
  $\cat(1)$ factor (Proposition~\ref{prop:cat1-joins}) which must be a
  diagonal link of an interval since by Lemma~\ref{lem:endpt-links}
  endpoint links cannot be not quite $\cat(1)$.
\end{proof}

\section{Spindles}\label{sec:spindles}

In this section we introduce combinatorial configurations we call
spindles and we relate the existence of spindles in a bounded graded
poset $P$ to the existence of certain local geodesic loops in a local
diagonal link of $P$.  When defining spindles, we use the notion of
complementary elements.

\begin{defn}[Complements]
  Two elements $x$ and $y$ in a bounded poset $P$ are
  \emph{complements} or \emph{complementary} when $x \vee y = \one$
  and $x \wedge y = \zero$.  In particular $\one$ is their only upper
  bound and $\zero$ is their only lower bound.  A pair of elements in
  a boolean lattice representing complementary subsets are
  complementary in this sense.  Note that if $z$ is any maximal lower
  bound of $x$ and $y$ and $w$ is any minimal upper bound of $x$ and
  $y$ then $x$ and $y$ are complements in the interval $P_{zw}$.
\end{defn}

\begin{figure}
  \bt{cc}
  \bt{c}\includegraphics{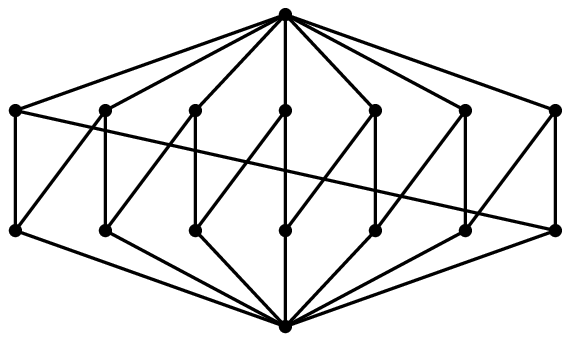}\et &
  \bt{c}\includegraphics{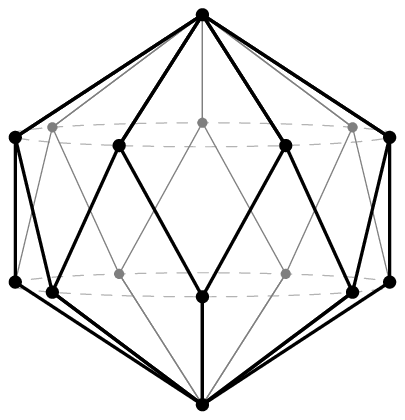}\et  
  \et
  \caption{Two views of a spindle of girth $14$.  The $3$-dimensional
    version, which looks like an antiprism capped off by two pyramids
    is the reason for the name.\label{fig:spindle}}
\end{figure}

\begin{defn}[Spindles]
  For some $k \geq 2$ let $(x_1,x_2,\ldots,x_{2k})$ be a sequence of
  $2k$ distinct elements in a bounded graded poset $P$ where the
  subscripts are viewed mod $2k$ and note that the parity of a
  subscript is well-defined in this context.  Such a sequence forms a
  \emph{global spindle of girth $2k$} if for every $i$ with one
  parity, $x_{i-1}$ and $x_{i+1}$ are complements in $P_{\zero x_i}$
  and for every $i$ with the other parity, $x_{i-1}$ and $x_{i+1}$ are
  complements in $P_{x_i \one}$.  See Figure~\ref{fig:spindle}. The
  elements $\zero$ and $\one$ are called the \emph{endpoints} of the
  global spindle and the sequence of elements
  $(x_1,x_2,\ldots,x_{2k})$ describes a zig-zag path in $P$ but with
  additional restrictions.  There is also a local version.  A (local)
  \emph{spindle}, with or without the adjective, is a global spindle
  inside some interval $P_{zw}$ with endpoints $z$ and $w$.
\end{defn}

\begin{defn}[Short spindles]\label{def:short-spindles}
  The \emph{length} of a global spindle is the length of the
  corresponding loop in the $1$-skeleton of the diagonal link of $P$
  (endowed with the metric induced by the orthoscheme metric). A
  global spindle is \emph{short} if its length is less than $2\pi$.
  The lengths of the individual edges in the diagonal link can be
  calculated using Proposition~\ref{prop:edge-length} and the reader should
  note that every edge in a diagonal link has length less than
  $\frac{\pi}{2}$.  Thus every spindle of girth~$4$ is short.
\end{defn}

A spindle is a generalization of a bowtie in the following sense.

\begin{prop}[Spindles, bowties and lattices]\label{prop:spindle-bowtie}
  A bounded graded poset $P$ contains a bowtie iff it contains a
  spindle of girth~$4$.  In particular, $P$ is a lattice iff $P$ does
  not contains a spindle of girth~$4$ and every bounded graded poset
  with no short spindles is a lattice.
\end{prop}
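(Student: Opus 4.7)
My plan is to prove the equivalence of bowties and girth-$4$ spindles and then combine with Proposition~\ref{prop:lattice-bowties} for the lattice characterization. For the final claim, I will invoke Proposition~\ref{prop:edge-length}: every edge of a diagonal link has length less than $\frac{\pi}{2}$, so any loop of four edges in a diagonal link has total length less than $2\pi$. Hence every girth-$4$ spindle is automatically short, so ``no short spindles'' forces ``no girth-$4$ spindles'', which forces $P$ to be a lattice.

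For the direction ``spindle implies bowtie'', I would start with a spindle $(x_1,x_2,x_3,x_4)$ in some interval $P_{zw}$ and set $(a,b,c,d)=(x_2,x_1,x_4,x_3)$. Complementarity of $b,d$ in $P_{za}$ makes $a$ the only upper bound of $b,d$ inside that interval; if $y<a$ were an upper bound of $b,d$ anywhere in $P$, then $z\leq b\leq y\leq a$ would put $y$ in $P_{za}$ and force $y=a$, contradicting $y<a$. So $a$ is a minimal upper bound of $b,d$ in $P$, and the symmetric arguments for $c$ (via $P_{zc}$) and for $b,d$ as maximal lower bounds of $a,c$ (via $P_{bw}$ and $P_{dw}$) finish the direction. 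Distinctness is built into the spindle definition.

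For the converse, given a bowtie with distinct elements $a,b,c,d$, first note that the maximality of $b,d$ as lower bounds of $a,c$ forces $b,d$ to be incomparable (otherwise one lies below the other and is not maximal), and similarly $a,c$ are incomparable. Choose $e$ to be any maximal common lower bound of $b,d$ and $f$ any minimal common upper bound of $a,c$; both exist because $P$ is bounded and graded. I claim $(b,a,d,c)$ is a spindle of girth $4$ inside $P_{ef}$. To verify complementarity of $b,d$ in $P_{ea}$: any upper bound $y$ of $b,d$ with $y\leq a$ must equal $a$ by the minimality of $a$ as an upper bound of $b,d$ in $P$, so $a$ is the join; and any $y$ with $e\leq y\leq b$ and $y\leq d$ must equal $e$ by the maximality of $e$ among common lower bounds of $b,d$, so $e$ is the meet. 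The three remaining complementarities (in $P_{ec}$, $P_{bf}$, $P_{df}$) are proved identically.

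The main obstacle is precisely this backward direction. One cannot simply read a bowtie as a global spindle of $P$, because the bowtie definition controls only the ``waist'' between $\{b,d\}$ and $\{a,c\}$ and says nothing about what lies below $b,d$ or above $a,c$; in particular, $b\wedge d$ need not equal $\zero$ in $P$. Passing to the sub-interval $P_{ef}$ is the device that converts the bowtie's minimality and maximality conditions into the join/meet identities demanded by the spindle definition.
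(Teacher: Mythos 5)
Your proof is correct and follows essentially the same route as the paper: the forward direction extracts the bowtie's minimality/maximality conditions from the spindle's complementarity in subintervals, the backward direction passes to the interval between a maximal lower bound of $b,d$ and a minimal upper bound of $a,c$ (exactly the paper's choice of $z$ and $w$), and the lattice statements follow from Proposition~\ref{prop:lattice-bowties} together with the fact that diagonal-link edges are shorter than $\frac{\pi}{2}$. You simply spell out the complementarity verifications that the paper leaves as ``easy to check.''
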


\begin{proof}
  If $P$ contains a spindle of girth~$4$ in the interval $P_{zw}$,
  then $x_1$, $x_2$, $x_3$ and $x_4$ form a bowtie since the bowtie
  conditions follow from the complementarity requirements.  For
  example, $x_2$ is a maximal lower bound for $x_1$ and $x_3$ because
  $x_1$ and $x_3$ are complements in the interval $P_{x_2 w}$.  In the
  other direction suppose $a$, $b$, $c$, and $d$ form a bowtie, let
  $z$ be any maximal lower bound for $b$ and $d$ and let $w$ be any
  minimal upper bound for $a$ and $c$.  It is then easy to check that
  $(a,b,c,d)$ is a global spindle of girth~$4$ in the interval
  $P_{zw}$.  The final assertion follows from
  Proposition~\ref{prop:lattice-bowties} and the fact that every spindle of
  girth~$4$ is short.
\end{proof}

The next step is to relate spindles and local geodesics in diagonal
links.  Let $P$ be a bounded graded poset and let $K$ be its
orthoscheme complex.  By Theorem~\ref{thm:ortho-curv} we know that
determining whether or not $K$ is $\cat(0)$ reduces to determining
whether or not an interval of $P$ has a diagonal link containing a
short local geodesic.  Since generic local geodesics are hard to
describe and hard to detect, we shift our focus to the simplest local
geodesics, i.e. those that remain in the $1$-skeleton of the local
diagonal link.  We now show that such paths are described by spindles.
Figure~\ref{fig:loops} summarizes the relationships among these three
classes of loops just described.

\begin{figure}
\begin{tabular}{ccccc}
  $\left\{ \begin{array}{c} \textrm{Local geodesics}\\ \textrm{in a
        local}\\ \textrm{diagonal link} \end{array}\right\}$
  & $\supset$ &
  $\left\{ \begin{array}{c} \textrm{Local geodesics}\\ \textrm{that
        remain in}\\ \textrm{its $1$-skeleton} \end{array}\right\}$
  & $\subset$ &
  $\left\{ \begin{array}{c} \textrm{Loops that}\\ \textrm{correspond}\\
      \textrm{to spindles} \end{array}\right\}$
\end{tabular}
\caption{The three types of loops discussed in this
  section.\label{fig:loops}}
\end{figure}

\begin{prop}[Transitions and complements]\label{prop:transitions}
  Let $P$ be a bounded graded poset and let $e_{xy}$ and $e_{yz}$ be
  distinct edges in the diagonal link of $P$. If the piecewise
  geodesic path from $v_x$ to $v_y$ to $v_z$ in the diagonal link of
  $P$ is locally geodesic at $v_y$ then either $x$ and $z$ are
  complements in the interval $P_{\zero y}$ or $x$ and $z$ are
  complements in the interval $P_{y \one}$.  As a consequence, every
  local geodesic loop in the diagonal link of $P$ that remains in its
  $1$-skeleton corresponds to a global spindle of $P$.
\end{prop}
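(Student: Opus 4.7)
The plan is to analyze the link of $v_y$ inside the diagonal link of $P$ via the join decomposition of orthoscheme links. Let $\sigma$ be the triangle with vertices $v_\zero, v_y, v_\one$; by Proposition~\ref{prop:lk-lk} we have $\lk(v_y, \lk(e_{\zero\one}, \order{P})) = \lk(\sigma, \order{P})$, and Proposition~\ref{prop:osc-links} then decomposes this as the spherical join
\[ \lk(\sigma,\order{P}) = \lk(e_{\zero y}, \order{P_{\zero y}}) \ast \lk(e_{y\one}, \order{P_{y\one}}), \]
since the two endpoint factors $K_\zero$ and $K_\one$ attached to the chain $\zero < y < \one$ are empty.

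Next I would identify where the two transition vectors at $v_y$ sit in this join. The unit tangent at $v_y$ pointing toward $v_x$ corresponds to the vertex $v_x$ in the first factor when $x < y$ and to the vertex $v_x$ in the second factor when $y < x$; the analogous statement holds for $v_z$. Points in distinct factors of a spherical join lie at distance exactly $\pi/2 < \pi$, so if $x$ and $z$ were on opposite sides of $y$ the two transition vectors would fail to be far apart, contradicting the local-geodesic hypothesis. Swapping the roles of $P_{\zero y}$ and $P_{y\one}$ if necessary, I may assume $x, z < y$, and local geodesicity at $v_y$ then reduces to the assertion that $v_x$ and $v_z$ lie at distance at least $\pi$ in the single factor $\lk(e_{\zero y}, \order{P_{\zero y}})$.

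The key step, and the main obstacle, is to show that this distance condition forces $x$ and $z$ to be complements in $P_{\zero y}$. I would argue the contrapositive: if $x$ and $z$ are not complements in $P_{\zero y}$, then either they have a common upper bound $w$ with $\zero < w < y$ or a common lower bound $w$ with $\zero < w < y$. When $w \notin \{x, z\}$, the broken path $v_x \to v_w \to v_z$ in the $1$-skeleton of the diagonal link of $P_{\zero y}$ consists of two edges each of length strictly less than $\pi/2$ by Proposition~\ref{prop:edge-length}; when $w$ coincides with $x$ or $z$, the pair $\{x, z\}$ is itself a chain and gives a single edge $e_{xz}$ of length less than $\pi/2$. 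Either way, $v_x$ and $v_z$ lie at distance strictly less than $\pi$, yielding the required contradiction.

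For the consequence, suppose $v_{x_1} \to v_{x_2} \to \cdots \to v_{x_m} \to v_{x_1}$ is a local geodesic loop that remains in the $1$-skeleton of the diagonal link. Applying the first part of the proposition at each $v_{x_i}$ forces $x_{i-1}$ and $x_{i+1}$ to sit on the same side of $x_i$ in the order, which makes the comparability relations along consecutive edges strictly alternate between $<$ and $>$. Hence $m$ is even, say $m = 2k$, and the complementarity data in $P_{\zero x_i}$ or $P_{x_i \one}$ provided at each vertex by the first part is exactly what the definition of a global spindle with endpoints $\zero$ and $\one$ requires.
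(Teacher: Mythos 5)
Your proof is correct, but it takes a genuinely different route from the paper's. The paper works entirely inside the diagonal link of $P$: it first disposes of the case where $x$ and $z$ are comparable using the spherical triangle on $v_x,v_y,v_z$ and the nonobtuse angle at the middle vertex supplied by Corollary~\ref{cor:triangles}, and then, when $x$ and $z$ are incomparable on the same side of $y$ with a common bound $w\neq y,\zero$, it concatenates the two triangles on $v_x,v_y,v_w$ and $v_w,v_y,v_z$, whose acute angles at $v_y$ sum to less than $\pi$. You instead pass to $\lk(v_y,\cdot)$, identify it via Propositions~\ref{prop:lk-lk} and~\ref{prop:osc-links} with the spherical join $\lk(e_{\zero y},\order{P_{\zero y}})\ast\lk(e_{y\one},\order{P_{y\one}})$, eliminate the opposite-sides case by the distance-$\frac{\pi}{2}$ property of joins, and then bound the distance inside one factor by an edge path of length less than $\pi$ using Proposition~\ref{prop:edge-length} (handling the degenerate case $w\in\{x,z\}$, i.e.\ $x,z$ comparable, by the single edge $e_{xz}$). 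This is exactly the decomposition the paper deploys only later, in Lemma~\ref{lem:low-transitions}, for the converse implication, so your organization makes the equivalence ``far apart at $v_y$ iff far apart in the relevant local diagonal link'' explicit, which is tidy; the cost is that you rely silently on two identifications that each deserve a sentence: that the transition vector at $v_y$ toward $v_x$ is precisely the vertex $v_x$ of the factor $\lk(e_{\zero y},\order{P_{\zero y}})$ under the join decomposition, and that two points lying in one factor of a spherical join are at distance $\min(d,\pi)$, where $d$ is their distance in that factor, so that ``far apart in the join'' really is equivalent to ``far apart in the factor.'' Both identifications are true and of the same implicit flavor the paper allows itself, so this is a presentational point, not a gap; the paper's version avoids them at the price of the trigonometric input of Corollary~\ref{cor:triangles}. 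Your closing alternation argument for loops in the $1$-skeleton matches the paper's.
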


\begin{proof}
  First note that because the edges $e_{xy}$ and $e_{yz}$ exist, $x$
  and $y$ are comparable in $P$ and $y$ and $z$ are comparable in $P$.
  If $x$ and $z$ are comparable as well then the path through $v_y$ is
  not locally geodesic because $x$, $y$ and $z$ form a chain, $v_x$,
  $v_y$ and $v_z$ are the corners of a convex spherical triangle in
  $\lk(e_{\zero \one},\order{P})$ and the nonobtuse angle at $v_y$
  (Corollary~\ref{cor:triangles}) shows that the path through $v_y$ is
  not locally geodesic because the transition vectors are not far
  apart.  In the remaining cases both $x$ and $z$ are below $y$ or
  both $x$ and $z$ are above $y$.  Assume that both are in $P_{\zero
    y}$; the other case is analogous and omitted.  If there is a $w$
  in $P_{\zero y}$ that is an upper bound of $x$ and $z$ other than
  $y$ or a lower bound of $x$ and $z$ other than $\zero$ then there is
  a spherical triangle in $\lk(e_{\zero \one},\order{P})$ with
  vertices $v_w$, $v_y$ and $v_x$ and a second triangle with vertices
  $v_w$, $v_y$ and $v_z$.  As both triangles have acute angles at
  $v_y$ (Corollary~\ref{cor:triangles}) and the path through $v_y$ is
  not locally geodesic because the transition vectors are not far
  apart.  For the final assertion suppose that $(x_1,\ldots,x_k)$ are
  the vertices of a local geodesic loop that remains in the
  $1$-skeleton of the diagonal link of $P$.  The local result proved
  above means that adjacent triples satisfy the required conditions
  and it forces the orderings ($x_i < x_{i+1}$ or $x_i > x_{i+1}$) to
  alternate, making $k$ even.
\end{proof}

It is important to note that implication established above is in one
direction only: a locally geodesic loop in the $1$-skeleton of a
diagonal link must come from a spindle but not every spindle
necessarily produces a locally geodesic loop.  The problem is that
just because $x$ and $z$ are complements in $P_{\zero y}$ does not
necessarily mean that $v_x$ and $v_z$ are far apart in
$\lk(v_y,\order{P})$ even though we conjecture that this is often the
case.

\begin{conj}[Complements are far apart]\label{conj:complements}
  Let $P$ be a bounded graded poset and let $K$ be its diagonal link.
  If $x$ and $y$ are complements in $P$ and $K$ is $\cat(1)$ then
  $v_x$ and $v_y$ are far apart in $K$.
\end{conj}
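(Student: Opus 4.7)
The plan is to prove the conjecture by induction on $n = \rk(P)$ and to derive a contradiction from the $\cat(1)$ hypothesis by producing a short local geodesic loop, in the spirit of Lemma~\ref{lem:bowditch}. The base case $n = 2$ is vacuous: any complementary pair $x, y$ consists of distinct atoms that are also coatoms, hence incomparable in $P$, so the $1$-skeleton of $K$ contains no edge between $v_x$ and $v_y$ and in fact $v_x, v_y$ lie in distinct components of $K$, giving $d_K(v_x, v_y) = \infty$.

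For the inductive step at rank $n \geq 3$, suppose $x, y$ are complements in $P$ with $d_K(v_x, v_y) = L < \pi$. Since $K$ is $\cat(1)$ and $L < \pi$ there is a unique geodesic $\gamma \colon [0, L] \to K$ from $v_x$ to $v_y$. The core idea is to construct a second local geodesic $\gamma'$ from $v_y$ back to $v_x$ of length at most $L$ such that the concatenation $\gamma \cdot \gamma'$ is a local geodesic loop of total length less than $2\pi$, contradicting $\cat(1)$. For this I must exhibit, at each of the transition points $v_x$ and $v_y$, incoming and outgoing transition vectors that are far apart in the corresponding link.

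The natural candidate for such a return path is produced via Proposition~\ref{prop:osc-links}: decompose $\lk(v_x, K) = K_x^- \ast K_x^+$, where $K_x^\pm$ are the diagonal links of the subintervals $P_{\zero x}$ and $P_{x \one}$, both of which are $\cat(1)$ as links in $K$. Writing the initial direction of $\gamma$ in this join as $u_x = \cos\theta_x \, a_x + \sin\theta_x \, b_x$ with $a_x \in K_x^-$ and $b_x \in K_x^+$, I would look for a dual direction $u_x' = \cos\theta_x \, a_x' + \sin\theta_x \, b_x'$ at distance $\pi$ from $u_x$ in the join. The natural candidates for $a_x'$ and $b_x'$ arise by applying the inductive hypothesis to complementary pairs inside $P_{\zero x}$ and $P_{x \one}$ induced by the complementarity of $x$ and $y$; a symmetric construction at $v_y$ uses the intervals $P_{\zero y}$ and $P_{y \one}$. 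Proposition~\ref{prop:transitions} ensures that a return path through such a dual direction is at least consistent with the local geodesic condition at $v_x$ and $v_y$.

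The principal obstacle is closing the loop globally: once $u_x'$ is chosen, one must show that the geodesic ray from $v_x$ in direction $u_x'$ actually reaches $v_y$ in time $L$, i.e., that complementarity induces a genuine metric symmetry of $K$ rather than merely a local pairing of directions at the endpoints. In highly symmetric examples this is easy to verify by hand (in the boolean lattice $B_3$ the diagonal link is a hexagonal circle of length $2\pi$ on which complementary pairs are antipodal, giving distance exactly $\pi$; similar calculations work for the linear subspace posets of Example~\ref{exmp:linear}), but in the absence of an isometric involution of $K$ exchanging $v_x$ and $v_y$ the existence of the return geodesic must be extracted from the join decomposition alone. I expect that making this step precise will require a delicate nested induction over maximal chains in $P_{\zero x}, P_{x\one}, P_{\zero y}, P_{y\one}$ and a careful use of Proposition~\ref{prop:cat1-joins} to propagate $\cat(1)$ information across successive joins; this is almost certainly the reason the statement is presented only as a conjecture.
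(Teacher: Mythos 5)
This statement is one the paper does not prove: it is stated explicitly as a conjecture, with only two pieces of partial evidence --- the boolean case (where complements are antipodal in the spherical diagonal link) and Lemma~\ref{lem:low-complements}, which settles rank at most~$3$ by a direct combinatorial argument on the bipartite diagonal link graph and needs no $\cat(1)$ hypothesis at all. Your proposal, by your own admission, does not close the central step either, so it is an outline of a strategy rather than a proof, and the gap in it is real and substantial.

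Concretely, the inductive step breaks down where you try to manufacture the return direction. To get a point of $\lk(v_x,K)=K_x^-\ast K_x^+$ at distance $\pi$ from the outgoing direction $u_x=\cos\theta_x\,a_x+\sin\theta_x\,b_x$, the join metric forces you to find points at distance at least $\pi$ from $a_x$ in $K_x^-$ and from $b_x$ in $K_x^+$, at the same join angle $\theta_x$. But $a_x$ and $b_x$ are arbitrary points of these local diagonal links (generically interior to simplices), not vertices $v_z$ attached to poset elements, so the inductive hypothesis --- which only speaks about vertex pairs coming from complementary elements --- gives you nothing about them. Nor does complementarity of $x$ and $y$ in $P$ induce, in any canonical way, complementary pairs inside $P_{\zero x}$ and $P_{x\one}$: $x\vee y=\one$ and $x\wedge y=\zero$ place no element of $P_{\zero x}$ or $P_{x\one}$ in a distinguished position relative to the geodesic $\gamma$. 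Your appeal to Proposition~\ref{prop:transitions} is also backwards: it gives a necessary condition (complementarity) for an edge path in the $1$-skeleton to be locally geodesic, not a sufficient one, and the paper warns immediately before stating the conjecture that the implication goes only one way. Finally, even if you could certify locally geodesic transitions at $v_x$ and $v_y$, you would still need an actual closed local geodesic of length less than $2\pi$ to invoke Lemma~\ref{lem:bowditch}; a pair of compatible directions at the two endpoints does not produce a return path of controlled length, which is exactly the ``closing the loop globally'' issue you flag and do not resolve. So the proposal does not prove the statement, and the statement remains open beyond rank~$3$.
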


We know that Conjecture~\ref{conj:complements} is true for the
rank~$n$ boolean lattice $P$ because the only elements that are
complements in $P$ correspond to complementary subsets $A$ and $B$,
these correspond to opposite corners of the $n$-cube $\order{P}$ and
to antipodal points in the $n-1$ sphere that is the diagonal link of
$P$.  In particular, they represent points that are distance $\pi$
from each other in $\lk(e_{\zero \one},\order{P})$.  In fact, for
boolean lattices, more is true.

\begin{prop}[Boolean spindles]
  If $P$ is a boolean lattice of rank~$n$ then every spindle in $P$
  has girth~$6$, length~$2\pi$ and describes an equator of the
  $(n-1)$-sphere that is the diagonal link of $P$.  In particular, $P$
  has no short spindles.
\end{prop}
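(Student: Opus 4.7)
Since every interval of the rank-$n$ boolean lattice is itself a boolean lattice, it suffices to treat global spindles in $B_m$, where $m$ is the rank of the interval hosting the spindle.

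The first step is to nail down the structure of such a spindle using uniqueness of complements. In $B_m$, if two elements are complementary inside a subinterval, each determines the other as its set-theoretic complement within the underlying set of that subinterval. Thus in a spindle $(x_1, \ldots, x_{2k})$ every $x_{i+1}$ is forced by the preceding pair $(x_{i-1}, x_i)$. After relabeling so that $x_2$ plays the ``join'' role (forcing $x_1 \subsetneq x_2$), I would compute in order $x_3 = x_2 \setminus x_1$, $x_4 = [m] \setminus x_1$, $x_5 = [m] \setminus x_2$, $x_6 = [m] \setminus x_3$, and $x_7 = x_1$, so the sequence is periodic with period dividing $6$. Combined with the distinctness of the $x_i$ and the condition $k \geq 2$, this forces $2k = 6$. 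Setting $A = x_1$, $B = x_2 \setminus x_1$, and $C = [m] \setminus x_2$, the spindle elements become $A$, $A \cup B$, $B$, $B \cup C$, $C$, $A \cup C$ for a partition $[m] = A \sqcup B \sqcup C$ into three nonempty blocks.

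The second step is a geometric identification of the diagonal link. Send each nonempty proper subset $S \subsetneq [m]$ to $u_S := \chi_S - (|S|/m)\mathbf{1} \in \R^m$; these vectors lie in the hyperplane $\mathbf{1}^{\perp}$, and a direct calculation from Proposition~\ref{prop:edge-length} shows that the angle between $u_S$ and $u_T$ matches the orthoscheme distance between $v_S$ and $v_T$ in the diagonal link. Two identities then drive the rest: $\chi_A + \chi_B + \chi_C = \mathbf{1}$ gives $u_A + u_B + u_C = 0$, and $\chi_{S^c} = \mathbf{1} - \chi_S$ gives $u_{A \cup B} = -u_C$, $u_{B \cup C} = -u_A$, and $u_{A \cup C} = -u_B$. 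So the six spindle vectors all lie in the $2$-plane spanned by $u_A$ and $u_B$, and their unit normalizations trace out points on the great circle cut from this plane by the unit sphere of $\mathbf{1}^{\perp}$; this is the equator of the statement.

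The third step is to confirm that the loop winds around this great circle exactly once. Because three nonzero vectors summing to zero form the directed sides of a nondegenerate triangle, the unit directions $\hat u_A, \hat u_B, \hat u_C$ occupy angular positions $\alpha < \beta < \gamma < \alpha + 2\pi$ on the circle with each consecutive gap strictly between $0$ and $\pi$. Inserting the antipodes $\alpha + \pi, \beta + \pi, \gamma + \pi$ and using these gap inequalities, I would verify that the cyclic sequence $\hat u_A, -\hat u_C, \hat u_B, -\hat u_A, \hat u_C, -\hat u_B$ is monotone around the circle, so summing the six arc lengths yields exactly $2\pi$. Since $2\pi$ is not strictly less than $2\pi$, the spindle is not short. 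The main technical obstacle is this monotonicity check; the earlier steps reduce to straightforward bookkeeping once the projection $S \mapsto u_S$ is in hand.
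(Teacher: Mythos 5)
Your proposal is correct, and its first half is exactly the paper's argument: the uniqueness of complements in boolean intervals forces the recursion $x_3=x_2\setminus x_1$, $x_4=[m]\setminus x_1$, and so on, so the sequence is $6$-periodic and distinctness pins the girth at $6$ with elements $A$, $A\cup B$, $B$, $B\cup C$, $C$, $A\cup C$. Where you diverge is the metric half. The paper gets length $2\pi$ qualitatively: complementary elements of a boolean lattice are antipodal in the diagonal link (this is the observation made just before the proposition, in the discussion of Conjecture~\ref{conj:complements}), so the spindle traces an embedded closed local geodesic in the round $(n-2)$-sphere, and the only such loops are equators of length $2\pi$. You instead compute explicitly: identify the diagonal-link vertex of $S$ with the normalization of $u_S=\chi_S-(|S|/m)\mathbf{1}$ (which is indeed what the projection in the proof of Proposition~\ref{prop:edge-length} produces, so edge lengths are the angles between these vectors), use $u_{S^c}=-u_S$ and $u_A+u_B+u_C=0$ to place the six points on one great circle, and then check the cyclic order (equivalently, sum exterior angles of the triangle with sides $u_A,u_B,u_C$) to get exactly $2\pi$. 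Your route is more computational but more self-contained: it does not appeal to the antipodality of complements nor to the classification of embedded closed geodesics on a sphere, and it yields the equator and the length in one calculation; the paper's route is shorter and conceptual, and generalizes in spirit to the modular case treated next. Two small points you should make explicit: the blocks $A$, $B$, $C$ are nonempty (if any were empty, two consecutive spindle elements would coincide, violating distinctness), which is what makes the three vectors nonzero and pairwise non-parallel and hence the triangle nondegenerate; and the equator statement, like the girth statement, is asserted for the (local) diagonal link of the interval in which the spindle is global, which your opening reduction already handles.
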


\begin{proof}
  Let $(x_1,x_2,\ldots, x_{2k})$ be a spindle in $P$.  Since intervals
  in boolean lattices are themselves boolean lattices, we may assume
  without loss of generality that this is a global spindle.  Suppose
  $x_1 < x_2$ and let $A$ and $B$ be the uniquely determined disjoint
  subsets of $[n]$ such that $x_1$ represents the set $A$ and $x_2$
  represents the subset $A \cup B$.  Finally let $C$ be the complement
  of $A \cup B$ in $[n]$.  Since $x_3$ is a complement of $x_1$ in
  $P_{\zero x_2}$ and complements in boolean lattices are unique,
  $x_3$ corresponds to the set $B$.  Similarly, $x_4$ is a complement
  of $x_2$ in $P_{x_3 \one}$ and thus must correspond to the set $B
  \cup C$.  Continuing in this way, $x_5$, $x_6$, $x_7$ and $x_8$
  correspond to the sets $C$, $A \cup C$, $A$ and $A \cup B$
  respectively.  Since the elements in a spindle are distinct, $x_i =
  x_{i+6}$ for all $i$ and the spindle has girth~$6$.  To see that its
  length $2\pi$, note that the fact that complements are far apart in
  boolean lattices means that global spindles describe paths that are
  embedded local geodesics in the diagonal link.  In this case the
  diagonal link of $P$ is a sphere and the only embedded local
  geodesics are equatorial paths of length $2\pi$.
\end{proof}

We conclude this section by extending this result to modular lattices.

\begin{defn}[Modular lattices]
  A \emph{modular lattice} is a graded lattice with the property that
  if $x$ and $y$ are complements in an interval $P_{zw}$ and $x$ has
  rank $i$ and corank $j$ in this interval, then $y$ has corank $i$
  and rank $j$ in this interval.  It should be clear from this
  definition that finite rank boolean lattices are examples of modular
  lattices as are the linear subspace posets described in
  Example~\ref{exmp:linear}.
\end{defn}

\begin{figure}
  \includegraphics{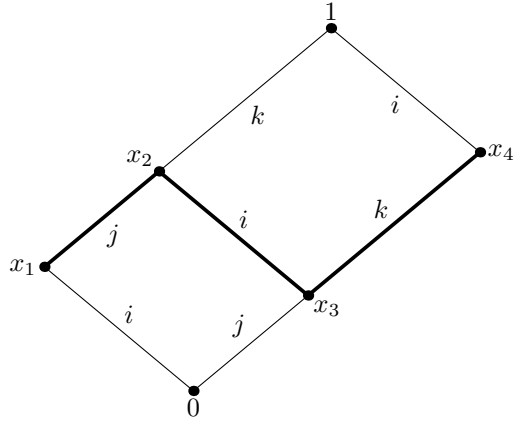}
  \caption{A portion of a global spindle in a modular lattice that
    describes a local geodesic edge path of length $\pi$ in its diagonal
    link.\label{fig:pi-path}}
\end{figure}

\begin{prop}[Modular spindles]\label{prop:modular}
  If $P$ is a bounded graded modular lattice then every spindle in $P$
  has girth at least $6$ and describes a loop of length at least
  $2\pi$.  In particular, $P$ has no short spindles.
\end{prop}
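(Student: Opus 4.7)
The plan is to use modularity to force a rigid rank pattern on the spindle elements, and then show via a trigonometric identity that any three consecutive edges of the resulting loop in the diagonal link sum to exactly $\pi$.

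First, a spindle of girth~$4$ is equivalent to a bowtie by Proposition~\ref{prop:spindle-bowtie}, and no lattice admits bowties by Proposition~\ref{prop:lattice-bowties}; hence every spindle in the modular lattice $P$ has girth at least~$6$. It suffices to treat the global case, since a local spindle inside an interval $P_{zw}$ is a global spindle for the modular sublattice $P_{zw}$ and its length is measured in the diagonal link of that interval.

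Given a global spindle $(x_1,\ldots,x_{2k})$ in a modular lattice of rank~$n$, relabeled so $x_1 < x_2$, I would set $a_j = \rk(x_{2j-1})$ and $b_j = \rk(x_{2j})$. Modularity applied to the complementarity of $x_{2j-1}, x_{2j+1}$ in $P_{\zero,x_{2j}}$ gives $a_j + a_{j+1} = b_j$, while modularity applied inside the interval $P_{x_{2j+1},\one}$ of rank $n - a_{j+1}$ yields $b_j + b_{j+1} = n + a_{j+1}$. Combining the two produces the central recurrence
\[
a_j + a_{j+1} + a_{j+2} = n \qquad (j \bmod k),
\]
so $(a_j)$ has period dividing~$3$. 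If all $a_j$ are equal, every spindle edge connects a rank-$n/3$ vertex to a rank-$2n/3$ vertex and so has length $\arccos(1/2) = \pi/3$ by Proposition~\ref{prop:edge-length}; the total loop length is then $2k\pi/3 \geq 2\pi$ since $k\geq 3$. Otherwise $(a_j)$ has exact period~$3$, and cyclic closure forces $k$ to be a multiple of~$3$.

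For the periodic case, writing $p,q,r$ for one period of $(a_j)$ so that $p+q+r = n$, Proposition~\ref{prop:edge-length} supplies three successive edge lengths $A, B, C$ satisfying
\[
\cos^2 A = \tfrac{pr}{(p+q)(q+r)},\quad \cos^2 B = \tfrac{qr}{(p+q)(p+r)},\quad \cos^2 C = \tfrac{pq}{(q+r)(p+r)}.
\]
The main obstacle is then the trigonometric identity $A + B + C = \pi$: a direct expansion of $\cos A \cos B - \sin A \sin B$, using $n - r = p+q$ to cancel the emerging common factors, simplifies to $-\cos C$. Once this identity is in hand, the $2k = 6m$ edges of the loop partition into $2m$ consecutive triples each of length $\pi$, giving total length $2m\pi \geq 2\pi$. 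Modularity enters the whole argument only through the rank identity $a_j+a_{j+1}+a_{j+2} = n$.
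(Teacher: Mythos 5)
Your proof is correct, and its skeleton---girth at least $6$ because $P$ is a lattice, reduction to global spindles because intervals of modular lattices are modular, and modularity forcing the rank recurrence $a_j+a_{j+1}+a_{j+2}=n$---is the same as the paper's. Where you genuinely differ is in how you establish the key fact that three consecutive edges of the loop have lengths summing to exactly $\pi$. The paper gets this by comparison: the triple has the same ranks and coranks as an edge path joining complementary elements of a boolean lattice, and in a boolean lattice the diagonal link is a round sphere in which complements are antipodal, so such a path has length exactly $\pi$. You instead prove it as a spherical trigonometric identity: from $\cos^2A=\frac{pr}{(p+q)(q+r)}$ and $\cos^2B=\frac{qr}{(p+q)(p+r)}$ one gets $\sin^2A=\frac{qn}{(p+q)(q+r)}$ and $\sin^2B=\frac{pn}{(p+q)(p+r)}$, so $\cos A\cos B-\sin A\sin B=\frac{\sqrt{pq}\,(r-n)}{(p+q)\sqrt{(q+r)(p+r)}}=-\cos C$, and since $A,B,C\in\left(0,\frac{\pi}{2}\right)$ by Proposition~\ref{prop:edge-length} this forces $A+B+C=\pi$; I checked that the expansion works exactly as you indicate, the only unstated (routine) point being the injectivity of cosine on $(0,\pi)$ needed to pass from $\cos(A+B)=\cos(\pi-C)$ to $A+B=\pi-C$. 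Your bookkeeping at the end is actually more explicit than the paper's: the observation that $(a_j)$ is invariant under the shift by $3$ on $\Z/k$, hence either constant (all edges of length $\pi/3$, total $2k\pi/3\ge 2\pi$) or of exact period $3$ with $3\mid k$ (edge lengths cycling $A,B,C$, total $\frac{2k}{3}\pi\ge 2\pi$), fills in the step the paper compresses into ``since the girth of the spindle is at least $6$, its length is at least $2\pi$.'' What the paper's route buys is brevity and no computation beyond Proposition~\ref{prop:edge-length}, reusing the boolean picture already developed; what yours buys is a self-contained argument that does not lean on the boolean case and that handles girths not divisible by $6$ explicitly.
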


\begin{proof}
  Since $P$ is a lattice, by Proposition~\ref{prop:spindle-bowtie} there are
  no spindles of length~$4$.  Thus every spindle has girth at
  least~$6$. Next, since intervals in modular lattices are modular
  lattices we only need to consider global spindles.  Let
  $(x_1,x_2,\ldots, x_{2k})$ be a global spindle with $x_1 < x_2$ and
  let $i$, $j$ and $k$ be positive integers such that $x_1$ has rank
  $i$, $x_2$ has corank $k$ and $i+j+k=n$ where $n$ is the rank of
  $P$.  The complementarity conditions and the definition of
  modularity imply that $x_3$ has rank $j$ and $x_4$ has corank $i$.
  See Figure~\ref{fig:pi-path}.  The key observation is that these are
  the same ranks and coranks and one of the geodesic paths between
  complementary subsets in a boolean lattice.  In particular, the sum
  of the lengths of these edges in the diagonal link of $P$ is exactly
  $\pi$.  Since the girth of the spindle is at least~$6$, its length
  is at least $2\pi$.  This completes the proof.
\end{proof}

Since bounded graded modular lattices have no short spindles, the
poset curvature conjecture leads us to conjecture the following.

\begin{conj}[Modular lattices and $\cat(0)$]
  Every bounded graded modular lattice has a $\cat(0)$ orthoscheme
  complex.
\end{conj}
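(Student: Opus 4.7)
The plan is to combine the orthoscheme link condition (Theorem~\ref{thm:ortho-curv}) with Proposition~\ref{prop:modular}. By Theorem~\ref{thm:ortho-curv}, showing $\order{P}$ is $\cat(0)$ reduces to showing every local diagonal link of $P$ is large. A direct check of the definition of modularity shows that intervals in modular lattices are modular, so this reduces further to proving the following claim: for every bounded graded modular lattice $P$, the diagonal link $L = \lk(e_{\zero\one}, \order{P})$ is $\cat(1)$.

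I would proceed by induction on the rank $n$ of $P$. The cases $n \le 2$ are trivial since $L$ is empty or a discrete set. For the inductive step, Proposition~\ref{prop:osc-links} expresses every proper link in $L$ as a spherical join of local endpoint links and local diagonal links of proper intervals of $P$; by the inductive hypothesis together with Lemma~\ref{lem:endpt-links} and Proposition~\ref{prop:cat1-joins}, every such join is $\cat(1)$, so $L$ is locally $\cat(1)$. By Lemma~\ref{lem:bowditch} it then suffices to rule out short local geodesic loops in $L$. Proposition~\ref{prop:transitions} combined with Proposition~\ref{prop:modular} handles the case where the loop lies in the $1$-skeleton of $L$: such a loop traces out a spindle in $P$ and hence has length at least $2\pi$.

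The main obstacle is extending this bound to local geodesic loops that traverse the interiors of higher-dimensional simplices of $L$. My proposed strategy is an apartment-style argument exploiting modularity. Given a maximal chain $C$ in $P$, the elements obtained by iteratively taking complements of elements of $C$ in the appropriate subintervals should generate a boolean sublattice $B_C \subseteq P$ whose diagonal link sits inside $L$ as a round $(n-1)$-sphere tiled by $\alpha_{n-1}$ simplices. I would aim to show that these apartments cover $L$ with enough compatibility, namely that any two cells lie in a common apartment and that distinct apartments are glued by simplicial retractions, so that any local geodesic loop can be pushed into a single apartment without increasing its length. Once inside a single sphere apartment, the only short local geodesic loop available is an equator of length exactly $2\pi$, finishing the induction.

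The hardest step will be establishing this apartment structure and verifying the retraction property for an arbitrary bounded graded modular lattice; in the smallest nontrivial family, the linear subspace posets of Example~\ref{exmp:linear}, the resulting object is precisely the classical spherical building of type $A_{n-1}$, and a Davis--Moussong-style argument applies. A cleaner alternative, if available, would be to prove directly that the diagonal link of any bounded graded modular lattice is a spherical building of type $A_{n-1}$ and then invoke the known fact that spherical buildings are $\cat(1)$; either route isolates the main technical content in a structural result about modular lattices that goes beyond the spindle count used in Proposition~\ref{prop:modular}.
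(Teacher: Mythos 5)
First, a point of reference: the paper does not prove this statement at all. It appears only as a conjecture, motivated by Proposition~\ref{prop:modular} (bounded graded modular lattices have no short spindles) together with the Poset Curvature Conjecture; only the rank at most~$4$ case follows from Theorem~\ref{main:poset}. So you are attempting an open problem, and your sketch does not close it. The parts of your reduction that are solid are indeed solid: by Theorem~\ref{thm:ortho-curv} it suffices to show that the diagonal link of every interval of a modular lattice is large; intervals of modular lattices are modular; links of cells inside a diagonal link are spherical joins of local diagonal links of proper intervals (the endpoint-link factors are empty there since such cells contain both $\zero$ and $\one$), so the inductive local-$\cat(1)$ step goes through; and Proposition~\ref{prop:transitions} together with Proposition~\ref{prop:modular} handles short local geodesic loops confined to the $1$-skeleton.

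The genuine gap is exactly the step you flag as hardest, and the way you propose to fill it is not merely unproven but false as stated. A bounded graded modular lattice need not be complemented, so the ``apartments'' you want -- boolean sublattices obtained by iteratively complementing the elements of a maximal chain -- need not exist, the diagonal link is in general not a spherical building of type $A_{n-1}$, and it is not covered by round spheres. Concretely, the divisor lattice of $12$ is a bounded graded rank~$3$ modular (indeed distributive) lattice in which the atom $2$ has no complement; its diagonal link is a path on the four vertices $4,2,6,3$, which contains no $6$-cycle, so no maximal chain lies in a boolean sublattice and there is no apartment system, let alone one with the ``any two cells in a common apartment'' and retraction properties your argument needs. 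The building route does work for complemented modular lattices such as $L_n(\F)$ of Example~\ref{exmp:linear}, but that is a much smaller class than the conjecture addresses. Note also that even in rank~$4$, the paper's own mechanism for dealing with geodesics that leave the $1$-skeleton (Theorem~\ref{thm:1-skeleton}) rests on the computer-assisted gallery classification of Proposition~\ref{prop:a3}, which has no analog in higher rank; your proposal supplies no substitute for it, so the conjecture remains open after your argument.
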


\section{Low rank}\label{sec:low}

In this section we shift our attention from bounded posets of
arbitrary rank to those of rank at most~$4$.  Our goal is to prove the
poset curvature conjecture in this context, thus establishing
Theorem~\ref{main:poset}.  The proof depends on two basic results.
The first is that complementary elements in a low rank poset
correspond to vertices that are always far apart in its diagonal link.

\begin{lem}[Low rank complements]\label{lem:low-complements}
  If $x$ and $y$ are complements in a poset $P$ of rank at most~$3$
  then $v_x$ and $v_y$ are far apart in its diagonal link.
\end{lem}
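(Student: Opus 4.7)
The plan is a case analysis on the rank $n$ of $P$. For $n\le 1$ the diagonal link of $P$ is empty and there is nothing to check. For $n=2$ each maximal simplex of the diagonal link is a copy of $\alpha_1$, i.e.\ a single point, so the diagonal link is a discrete set of vertices; any two distinct vertices lie in separate components, at distance $\infty\ge\pi$, so the statement is automatic.

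The substantive case is $n=3$. The first step is to use Proposition~\ref{prop:edge-length} with $i=j=k=1$ to observe that every edge of the diagonal link is a spherical arc of length exactly $\pi/3$, and that the diagonal link is a $1$-dimensional \ps complex, that is, a metric graph. Hence the spherical distance between two vertices equals the length of a shortest edge path. Second, the vertex set of the diagonal link is the set of interior elements of $P$, each of rank $1$ or $2$; an edge $v_xv_z$ is present iff $x$ and $z$ are comparable, and for interior elements in rank $3$ that forces exactly one endpoint of any edge to have rank $1$ and the other rank $2$. Consequently ranks alternate along any edge path, yielding a parity constraint on the number of edges in a path joining $v_x$ to $v_y$.

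Now let $x,y$ be complements with $v_x,v_y$ in the diagonal link and split into cases by their ranks. If $x$ and $y$ have the same rank then every edge path from $v_x$ to $v_y$ has an even number of edges, so at least $2$. A $2$-edge path $v_x\to v_z\to v_y$ would produce either a rank-$2$ common upper bound of $x$ and $y$ (if both have rank $1$) or a rank-$1$ common lower bound (if both have rank $2$). Either possibility contradicts the fact that for complements $\one$ is the only upper bound and $\zero$ the only lower bound of $x$ and $y$. Hence every such edge path has at least $4$ edges and length at least $4\pi/3>\pi$. If instead $x$ and $y$ have opposite ranks, then every edge path has an odd number of edges, and a single edge would force $x$ and $y$ to be comparable, contradicting complementarity; so any edge path has at least $3$ edges and length at least $\pi$.

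The main obstacle is really bookkeeping: tracking ranks along an edge path and translating the complementarity conditions $x\vee y=\one$ and $x\wedge y=\zero$ into the nonexistence of a common bound of the right rank to shorten the path. Once one has pinned down that the rank-$3$ diagonal link is a metric graph with every edge of length $\pi/3$, together with the rank-alternation structure, the remainder of the argument is purely combinatorial.
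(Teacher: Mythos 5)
Your proof is correct and follows essentially the same route as the paper: reduce to the rank~$3$ case (lower ranks having an edgeless diagonal link), observe that the diagonal link is then a bipartite metric graph with all edges of length $\frac{\pi}{3}$, and rule out combinatorial distance $1$ or $2$ between $v_x$ and $v_y$ using the fact that complements are incomparable and have no proper common upper or lower bound. Your added parity bookkeeping is a harmless refinement of the paper's observation that far apart is equivalent to combinatorial distance at least~$3$.
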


\begin{proof}
  If $P$ has rank less than $3$ then its diagonal link has no edges
  and $v_x$ and $v_y$ are trivially far apart.  Thus we may assume
  that the rank of $P$ is $3$.  In this case, the diagonal link is a
  bipartite metric graph where every edge has length $\frac{\pi}{3}$
  and connects an element of rank~$1$ to an element of rank~$2$.  As a
  consequence $v_x$ and $v_y$ are not far apart iff their
  combinatorial distance is less than~$3$.  Distance~$1$ means $x <y$
  or $x > y$ and distance~$2$ means $x$ and $y$ are both rank $1$ with
  a common rank~$2$ upper bound or both rank $2$ with a common
  rank~$1$ lower bound.  All of these situations are excluded by the
  hypothesis that $x$ and $y$ are complements.
\end{proof}

This result has consequences for piecewise geodesic paths in the
$1$-skeleton of the diagonal link.

\begin{lem}[Low rank transitions]\label{lem:low-transitions}
  Let $P$ be a bounded graded poset of rank at most $4$.  If $x$, $y$
  and $z$ are distinct elements of $P$ such that $x$ and $z$ are
  complements in $P_{\zero y}$ or complements in $P_{y \one}$, then
  the edge path from $v_x$ to $v_y$ to $v_z$ in the diagonal link of
  $P$ is locally geodesic.
\end{lem}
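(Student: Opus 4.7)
The plan is to verify that the two transition vectors at $v_y$ are far apart in $\lk(v_y, K)$, where $K$ denotes the diagonal link of $P$. Since $v_y$ is a vertex of $K$, we have $y \notin \{\zero,\one\}$, which will give us the necessary rank bounds. First I would identify $\lk(v_y,K)$ using Propositions~\ref{prop:lk-lk} and~\ref{prop:osc-links}: the vertex $v_y$ in $K=\lk(e_{\zero\one},\order{P})$ corresponds to the triangle $\tau=\{v_\zero,v_y,v_\one\}$ of $\order{P}$ (the chain $\zero < y < \one$), so $\lk(v_y,K)=\lk(\tau,\order{P})$. Applying Proposition~\ref{prop:osc-links} to this three-element chain, the two endpoint-link factors collapse to the empty complex and one obtains the spherical join decomposition $\lk(v_y,K) = D_0 \ast D_1$, where $D_0$ is the diagonal link of $P_{\zero y}$ and $D_1$ is the diagonal link of $P_{y\one}$.

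Next I would identify the transition vectors inside this join. Without loss of generality assume $x$ and $z$ are complements in $P_{\zero y}$; the other case is symmetric. Then $x,z<y$, and the edges $e_{xy}$ and $e_{yz}$ of $K$ come from the $3$-orthoschemes $\ortho(v_\zero,v_x,v_y,v_\one)$ and $\ortho(v_\zero,v_z,v_y,v_\one)$ in $\order{P}$. Extending the chain $\zero<y<\one$ by inserting $x$ (respectively $z$) between $\zero$ and $y$ inserts a vertex into the first gap, so by the correspondence underlying Proposition~\ref{prop:osc-links} the perpendicular direction at $\tau$ that points toward the new vertex lies in the first join factor $D_0$, and is precisely $v_x$ (respectively $v_z$) viewed as a vertex of the diagonal link of $P_{\zero y}$.

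Now I would apply Lemma~\ref{lem:low-complements}. Because $y\neq\zero,\one$ and $P$ has rank at most $4$, the interval $P_{\zero y}$ has rank at most $3$. Since $x$ and $z$ are complements in $P_{\zero y}$, Lemma~\ref{lem:low-complements} guarantees that $v_x$ and $v_z$ are far apart in the diagonal link $D_0$, i.e.\ their distance in $D_0$ is at least $\pi$. Distances within a single factor of a spherical join are preserved in the join, so $v_x$ and $v_z$ remain at distance at least $\pi$ in $D_0\ast D_1 = \lk(v_y,K)$. Thus the transition vectors are far apart and the piecewise geodesic is locally geodesic at its only transition point $v_y$.

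The argument is essentially bookkeeping; the only potential obstacle is making sure the transition-vector identification under Proposition~\ref{prop:osc-links} is correct, and that the rank bound on the relevant interval is at most $3$ so that Lemma~\ref{lem:low-complements} is applicable. Both issues are handled cleanly by the observation that $v_y$ being a vertex of $K$ forces $y\notin\{\zero,\one\}$, which suffices when $P$ itself has rank at most $4$.
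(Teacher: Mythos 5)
Your proposal is correct and follows essentially the same route as the paper: decompose $\lk(v_y,K)$ as the spherical join of the diagonal links of $P_{\zero y}$ and $P_{y\one}$, apply Lemma~\ref{lem:low-complements} to the rank-at-most-$3$ interval containing $x$ and $z$, and use that being far apart in one join factor implies being far apart in the join. Your explicit justification of the join decomposition via Propositions~\ref{prop:lk-lk} and~\ref{prop:osc-links}, and of the rank bound via $y\notin\{\zero,\one\}$, only spells out details the paper leaves implicit.
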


\begin{proof}
  Suppose $x$ and $z$ are complements in $P_{\zero y}$; the other case
  is analogous.  By Lemma~\ref{lem:low-complements} $v_x$ and $v_z$ are
  far apart in $\lk(e_{\zero y}, \order{P_{\zero y}})$ since $P_{\zero
    y}$ is a poset of rank at most $3$.  Recall that the link of $v_y$
  in the diagonal link of $P$ is the spherical join $\lk(e_{\zero y},
  \order{P_{\zero y}}) \ast \lk(e_{y\one}, \order{P_{y\one}})$.  The
  fact that $v_x$ and $v_z$ are far apart in one factor means that
  $v_x$ and $v_z$ are far apart in the spherical join.  As a
  consequence, the path from $v_x$ to $v_y$ to $v_z$ in the diagonal
  link of $P$ is locally geodesic.
\end{proof}

Lemma~\ref{lem:low-transitions} quickly implies one-half of
Theorem~\ref{main:poset}.

\begin{thm}[Low rank spindles]\label{thm:low-spindle}
  If $P$ is a bounded graded poset of rank at most $4$, then global
  spindles in $P$ describe local geodesic loops in its diagonal link.
  As a consequence, if $P$ contains a short spindle, global or local,
  then the orthoscheme complex of $P$ is not $\cat(0)$.
\end{thm}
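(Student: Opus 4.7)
The plan is to confirm the local geodesic condition for the edge-path loop traced by a global spindle by applying Lemma~\ref{lem:low-transitions} at every transition, and then to harvest the stated consequence via Theorem~\ref{thm:ortho-curv} together with, in the local case, Lemma~\ref{lem:lk-interval}.

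In detail, given a global spindle $(x_1, x_2, \ldots, x_{2k})$ in $P$, I would form the piecewise geodesic loop $\gamma$ in the diagonal link $\lk(e_{\zero\one}, \order{P})$ whose transition points are $v_{x_1}, v_{x_2}, \ldots, v_{x_{2k}}$ in cyclic order. A brief preliminary remark is that the complementarity clauses in the definition of a spindle, combined with distinctness of the $x_i$, force every $x_i$ to lie strictly between $\zero$ and $\one$, so each of the subintervals $P_{\zero x_i}$ and $P_{x_i\one}$ has rank at most $3$ whenever $P$ has rank at most $4$. The heart of the argument is to check local geodesicity at an arbitrary transition $v_{x_i}$: the consecutive triple $(x_{i-1}, x_i, x_{i+1})$ consists of three distinct elements of $P$ for which $x_{i-1}$ and $x_{i+1}$ are complements in $P_{\zero x_i}$ or in $P_{x_i\one}$ (depending on the parity of $i$), which are exactly the hypotheses of Lemma~\ref{lem:low-transitions}. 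That lemma then concludes that the path $v_{x_{i-1}} \to v_{x_i} \to v_{x_{i+1}}$ is locally geodesic at $v_{x_i}$, and running this check at every transition shows that $\gamma$ itself is a local geodesic loop.

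For the consequence, suppose $P$ contains a short spindle. If it is global, then the associated $\gamma$ is a local geodesic loop of length less than $2\pi$ in the diagonal link of $P$, so that link fails to be large, and Theorem~\ref{thm:ortho-curv} immediately yields that $\order{P}$ is not $\cat(0)$. If the spindle is merely local, say a global spindle inside some interval $P_{zw}$, then $P_{zw}$ is itself a bounded graded poset of rank at most $4$, so the case just handled (applied to $P_{zw}$) produces a short local geodesic loop in the diagonal link of $P_{zw}$. By Lemma~\ref{lem:lk-interval} that link is isometric to the link of a simplex in $\order{P}$, hence is a local diagonal link of $P$ that fails to be large, and a second application of Theorem~\ref{thm:ortho-curv} concludes that $\order{P}$ is not $\cat(0)$.

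The main obstacle is essentially confined to the pointwise transition check, which is entirely absorbed into Lemma~\ref{lem:low-transitions} (and in turn Lemma~\ref{lem:low-complements}). The low-rank hypothesis $\rk(P)\le 4$ is used in precisely one place, namely to guarantee that the subinterval witnessing each complementarity has rank at most $3$, so that Lemma~\ref{lem:low-complements} applies; once that local conclusion is in hand, everything else is a mechanical assembly of Theorem~\ref{thm:ortho-curv}, Lemma~\ref{lem:lk-interval}, and the definition of a short spindle.
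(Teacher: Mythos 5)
Your proposal is correct and follows essentially the same route as the paper: the first assertion is obtained by applying Lemma~\ref{lem:low-transitions} at each transition of the spindle's edge loop, and the consequence follows by restricting to the interval where the spindle is global and invoking Theorem~\ref{thm:ortho-curv}. Your extra appeal to Lemma~\ref{lem:lk-interval} is harmless but redundant, since Theorem~\ref{thm:ortho-curv} is already stated in terms of local diagonal links.
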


\begin{proof}
  The first assertion follows immediately from
  Lemma~\ref{lem:low-transitions}.  To see the second, suppose $P$
  contains a short spindle and restrict to the interval where it is
  global.  Since the spindle describes a short local geodesic loop in
  this local diagonal link, it is not $\cat(1)$ and by
  Theorem~\ref{thm:ortho-curv} the orthoscheme complex of $P$ is not
  $\cat(0)$.
\end{proof}

Having established that the existence of short spindles in low rank
posets prevent its orthoscheme complex from being $\cat(0)$, we pause
for a moment to clarify exactly which spindles in low rank posets are
short.  First note that every spindle of girth~$4$ is short (since
edges in the diagonal link have length less than $\frac{\pi}{2}$) and
they occur iff the poset is not a lattice
(Proposition~\ref{prop:spindle-bowtie}).  Thus we only need to consider
spindles in lattices.

\begin{prop}[Short spindles]\label{prop:short-spindles}
  If a bounded graded lattice $P$ of rank at most~$4$ contains a short
  spindle, then $P$ has rank~$4$, the spindle is a global spindle of
  girth $6$ and its elements alternate between two adjacent ranks.
\end{prop}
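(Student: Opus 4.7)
The plan is to leverage the lattice hypothesis together with the explicit edge-length formulas from Proposition~\ref{prop:edge-length} in order to pin down the interval in which the spindle lives, its girth, and the rank pattern of its elements. I would begin by noting that, because $P$ is a lattice, Proposition~\ref{prop:spindle-bowtie} rules out girth-$4$ spindles, so every spindle has girth at least $6$. A spindle is by definition global in some interval $Q = P_{zw}$, itself a graded lattice of rank at most~$4$, and I would work inside $Q$. If $Q$ has rank at most~$2$ then every peak of the spindle would be forced to equal $\one$, violating distinctness; if $Q$ has rank $3$ then valleys must have rank~$1$ and peaks rank~$2$, so by Proposition~\ref{prop:edge-length} every edge of the corresponding loop in the diagonal link of $Q$ has length $\pi/3$ and any girth-$2k$ spindle has length at least $2\pi$. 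Therefore $Q$ must have rank~$4$, and since $P$ has rank at most~$4$ this forces $Q = P$, yielding at once that $P$ has rank~$4$ and that the spindle is global.

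Next I would pin down the girth. In a rank-$4$ poset the three edge lengths that appear in the diagonal link are $a = \arccos(1/\sqrt{3})$ for rank-pairs $(1,2)$ and $(2,3)$ and $b = \arccos(1/3)$ for the rank-pair $(1,3)$, and the computation $\cos(2a) = 2\cos^2 a - 1 = -1/3$ gives the key relation $2a + b = \pi$. From $a > \pi/4$ one then gets that any spindle of girth at least $8$ has length at least $8a > 2\pi$, so the girth is exactly $6$. Letting $m$ denote the number of edges of type $(1,2)$ or $(2,3)$ in a girth-$6$ spindle, its length is $am + b(6 - m) = 6b - (b - a)m$, and substituting $b = \pi - 2a$ reduces the inequality ``length $< 2\pi$'' to $m > 4$; consequently at most one edge of the spindle can be of type $(1,3)$.

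The main obstacle, and the last step, is to rule out the case of exactly one $(1,3)$-edge. I would do this by a cyclic rank trace: if the unique $(1,3)$-edge is $(v_1, p_2)$, then $v_1$ has rank $1$ and $p_2$ has rank $3$; since the other edge at $p_2$ is not of type $(1,3)$, the adjacent valley $v_3$ must have rank $2$, which forces $p_4$ to have rank $3$, then $v_5$ to have rank $2$, and then $p_6$ to have rank $3$, but then the closing edge $(p_6, v_1)$ has type $(1,3)$ as well, contradicting uniqueness. Thus no edge has type $(1,3)$, every edge of the loop changes rank by exactly one, and because valleys have rank at most~$2$ while peaks have rank at least~$2$, the six elements must alternate between two adjacent ranks, either $\{1,2\}$ or $\{2,3\}$.
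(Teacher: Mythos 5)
Your proof is correct and follows essentially the same route as the paper's: reduce to a global spindle in an interval, use the lattice hypothesis (via Proposition~\ref{prop:spindle-bowtie}) to force girth at least $6$, the $\pi/3$ edge lengths to eliminate rank $3$, the bound $a>\pi/4$ on the short edge to eliminate girth at least $8$, and the identity $2a+b=\pi$ from Proposition~\ref{prop:edge-length} to analyze girth $6$. The only difference is the endgame: the paper invokes the (unproved) evenness of the number of long edges, whereas you derive ``at most one long edge'' from the exact inequality $m>4$ and then exclude a single long edge by tracing ranks around the hexagon --- which in effect supplies the justification the paper leaves implicit.
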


\begin{proof}
  After replacing $P$ by one its intervals if necessary we may assume
  that the spindle under consideration is a global spindle in $P$ and,
  since $P$ is a lattice, it must have girth at least~$6$.  If $P$ has
  rank~$3$ (lower ranks are too small to contain spindles) then all
  edges in the diagonal link of $P$ have length $\frac{\pi}{3}$ and
  the spindle is not short.  Thus $P$ has rank~$4$.  In rank~$4$ there
  are two possible edge lengths: the shorter edges connect adjacent
  ranks and have length $\arccos \left( \sqrt{\frac{1}{3}} \right)
  \cong .304\pi$ and the longer edges connect ranks $1$ and $3$ and
  have length $\arccos \left( \frac{1}{3} \right) \cong .392\pi$.
  (Exact values are calculated using Proposition~\ref{prop:edge-length}.)
  Since both lengths are more than $\frac{\pi}{4}$, spindles of girth
  $8$ or more are not short.  Finally, since one long and two short
  edges have total length exactly $\pi$ and spindles in this setting
  have to have an even number of longer edges, the only short spindles
  are those involving six short edges creating a zig-zag path between
  two adjacent ranks as shown in Figure~\ref{fig:bad}.
\end{proof}

\begin{figure}
\includegraphics{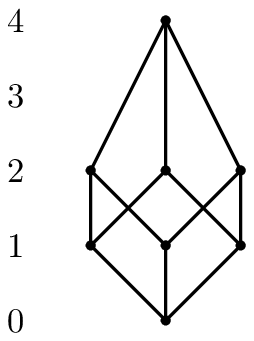}\hspace{1in}
\includegraphics{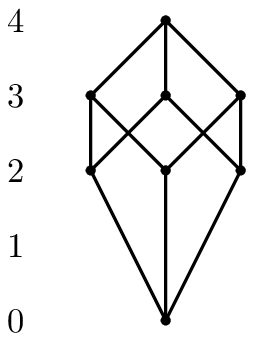}
\caption{Short spindles in rank~$4$ lattices.\label{fig:bad}}
\end{figure}

We should note that these low rank short spindles are closely related
to the empty triangles that arise when testing the curvature of cube
complexes.  Bowties cannot occur in the face poset of a cube complex
and the empty triangles that prevent cube complexes from being
$\cat(0)$ correspond to one of the short spindles of girth~$6$ just
described.

\begin{rem}[Short spindles and empty triangles]
  Let $K$ be cube complex that is not $\cat(0)$ and let $\sigma$ be a
  cell in $K$ whose link contains an empty triangle
  (Proposition~\ref{prop:cube-curvature}).  The zig-zag path shown on the
  lefthand side of Figure~\ref{fig:bad} corresponds to a portion of
  the face poset of the link of $\sigma$ in $K$.  The three elements
  in rank~$1$ and the three elements in rank~$2$ correspond to the
  three vertices and edges respectively of a triangle in the link of
  $\sigma$ and the absence of an element of rank~$3$ which caps off
  this zig-zag path corresponds to the fact that the triangle is
  empty.
\end{rem}

We now return to the proof of Theorem~\ref{main:poset}.  The second
basic result we need is that whenever the diagonal link of a low rank
poset contains a short local geodesic, that local geodesic can be
homotoped into the $1$-skeleton of the diagonal link without
increasing its length.  For posets of rank strictly less than $4$
there is nothing to prove and for rank~$4$ posets we appeal to earlier
work by Murray Elder and the second author \cite{ElMc02}.  The key
concept we need is that of a gallery.

\begin{defn}[Galleries]
  Given a local geodesic loop $\gamma$ in a \ps complex $K$ one can
  construct a new \ps complex $L$ called a \emph{gallery} such that
  the map $\gamma$ from a metric circle to $K$ factors through an
  embedding of the circle into $L$ and a cellular immersion of $L$
  into $K$.  The rough idea is to glue together copies of the cells
  through which $\gamma$ passes in $K$.  More specifically, every
  point in the path $\gamma$ is contained in a uniquely defined open
  simplex.  For this well-defined linear or cyclic sequence of open
  simplices, take a copy of the corresponding closed simplices and
  glue them together in the minimal way possible so that the result is
  a \ps complex that maps to $K$ and the curve $\gamma$ lifts though
  this map.  See \cite{ElMc02} for additional detail.
\end{defn}

\begin{defn}[Types of galleries]
  So long as the lengths of edges in $K$ are less than
  $\frac{\pi}{2}$, the gallery $L$ will be homotopy equivalent to a
  circle and the image of the circle in $L$ will have winding
  number~$1$.  Moreover, if $K$ is $2$-dimensional and the loop
  $\gamma$ does not pass through a vertex of $K$ then $L$ will be a
  $2$-manifold with boundary called either an \emph{annular gallery}
  or a \emph{m\"obius gallery} depending on its topology.  If $\gamma$
  does pass through a vertex of $K$ then $L$ is called a
  \emph{necklace gallery} and it can be broken up into segments called
  \emph{beads} corresponding to a portion of $\gamma$ starting at a
  vertex, ending at a vertex and not passing through a vertex in
  between.
\end{defn}

In \cite{ElMc02} a computer program was used to systematically
enumerate the finite list of possible galleries determined by a short
local geodesic in the vertex link of a \pe complex built out of
$\widetilde A_3$ Coxeter shapes.  (An $\widetilde A_n$ Coxeter shape
is a \pe tetrahedron whose vertex links are Coxeter shapes of type
$A_n$.  One general definition of these shapes is given in
Definition~\ref{def:columns}.)  The results of this enumeration are
listed in Figures~\ref{fig:a-cox}, \ref{fig:m-cox},
and~\ref{fig:v-cox} according to the following conventions.  The
triangles shown should be viewed as representing spherical triangles:
the angles that look like $\pi/2$ angles are in fact $\pi/2$ angles,
while the $\pi/4$ angles are meant to represent $\pi/3$ angles.  Thus,
in the third figure of Figure~\ref{fig:v-cox} both sides connecting
the specified end cells are actually geodesics as can be seen in a
more suggestive representation of the same configuration shown on the
righthand side of Figure~\ref{fig:lunes-E}.
The heavily shaded leftmost and rightmost edges in the configurations
shown in Figure~\ref{fig:a-cox} should be identified to produce annuli
and the heavily shaded leftmost and rightmost edges in the
configurations shown in Figure~\ref{fig:m-cox} should be identified
with a half-twist to produce m\"obius strips.  The three
configurations shown in Figure~\ref{fig:v-cox} are three of the beads
from which necklace galleries are formed.  They are labeled $C$, $D$
and $E$ since $A$ and $B$ are used to denote the short and long edges,
respectively, thought of as beads.  The following result was proved in
\cite{ElMc02}.

\begin{figure}
\begin{center}
\begin{tabular}{cc}
\begin{tabular}{c}\includegraphics{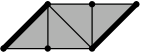}\end{tabular} &
\begin{tabular}{c}\includegraphics{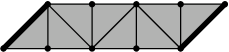}\end{tabular} 
\end{tabular}
\end{center}
\caption{Two annular galleries.\label{fig:a-cox}}
\end{figure}

\begin{figure}
\begin{center}
\begin{tabular}{cccc}
\begin{tabular}{c}\includegraphics{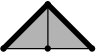}\end{tabular} &
\begin{tabular}{c}\includegraphics{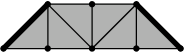}\end{tabular} &
\begin{tabular}{c}\includegraphics{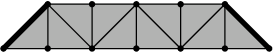}\end{tabular} &
\begin{tabular}{c}\includegraphics{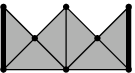}\end{tabular} 
\end{tabular}
\end{center}
\caption{Four m\"obius galleries.\label{fig:m-cox}}
\end{figure}

\begin{figure}
\begin{center}
\begin{tabular}{ccc}
\begin{tabular}{c}\includegraphics{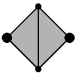}\end{tabular} &
\begin{tabular}{c}\includegraphics{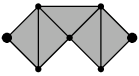}\end{tabular} &
\begin{tabular}{c}\includegraphics{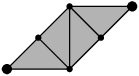}\end{tabular} \\
$C$ & $D$ & $E$\\
\end{tabular}
\end{center}
\caption{Three non-trivial beads.\label{fig:v-cox}}
\end{figure}

\begin{prop}[Short $A_3$ geodesics]\label{prop:a3}
  Let $K$ be a vertex link of a \pe complex built out of $\widetilde
  A_3$ Coxeter shapes.  If this \ps complex built out of $A_3$ Coxeter
  shapes is not quite $\cat(1)$ then it contains a short local
  geodesic loop $\gamma$ that determines a gallery $L$ that is either
  one of the two annular galleries listed in Figure~\ref{fig:a-cox},
  one of the four m\"obius galleries listed in Figure~\ref{fig:m-cox},
  or a necklace gallery formed by stringing together the short edge
  $A$, the long edge $B$, and the three nontrivial beads shown in
  Figure~\ref{fig:v-cox} in one of $26$ particular ways.  In
  particular, the $26$ necklace galleries that contain a short
  geodesic loop are described by the following sequences of beads:
  $A^2$, $A^4$, $A^6$, $A^2B$, $A^2B^2$, $A^2B^3$, $ABAC$, $A^2C$,
  $A^2C^2$, $A^2D$, $A^2E$, $A^4B$, $CA^4$, $B$, $B^2$, $B^3$, $B^4$,
  $B^5$, $BE$, $B^2E$, $C$, $C^2$, $C^3$, $CD$, $D$, and $E$.
\end{prop}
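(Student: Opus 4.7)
The plan is to extract a short local geodesic loop $\gamma$ from the hypothesis via Lemma~\ref{lem:bowditch}, build its gallery $L$, and then enumerate the finitely many possibilities allowed by the rigid geometry of $A_3$ Coxeter triangles together with the length bound $\mathrm{length}(\gamma)<2\pi$. Proposition~\ref{prop:edge-length} records the three edge lengths of an $A_3$ triangle: two short edges of length $\arccos\sqrt{1/3}$ and one long edge of length $\arccos(1/3)$; Corollary~\ref{cor:triangles} records that the angle at the middle vertex is $\frac{\pi}{2}$ while the other two are acute. These are the only ingredients the local geometry provides, and everything else in the argument will be combinatorics constrained by them.

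First I would observe that $L$ is homotopy equivalent to a circle and that $\gamma$ embeds in $L$ with winding number~$1$: every edge in an $A_3$ triangle has length strictly less than $\frac{\pi}{2}$, so a local geodesic cannot backtrack across an edge of $K$, which forces neighboring triangles in $L$ to be glued along full edges in the expected way. Next I would split into the two gallery types flagged in the definition. If $\gamma$ avoids the vertices of $K$, then $L$ is a compact surface with boundary carrying $\gamma$ in its interior, hence either an annulus or a möbius strip. If $\gamma$ passes through vertices, one cuts $L$ at the vertex preimages into a cyclic sequence of \emph{beads}, each bead being the gallery of a maximal vertex-avoiding subarc of $\gamma$.

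The heart of the argument is then a constrained enumeration, carried out separately for surface galleries and for beads. For a surface gallery, the local geodesic condition across each interior edge forces consecutive triangles to straighten, while the angle and edge-length data bound how many triangles can appear before the boundary component carrying $\gamma$ either closes up or exceeds total length $2\pi$; the finite enumeration yields exactly the two annular configurations of Figure~\ref{fig:a-cox} and the four möbius configurations of Figure~\ref{fig:m-cox}. For beads, the same local constraints applied to a vertex-to-vertex arc reduce the possibilities to the two trivial edges $A$, $B$ plus the three nontrivial shapes $C$, $D$, $E$ of Figure~\ref{fig:v-cox}. Finally, one assembles beads into cyclic words and uses the explicit lengths of each bead, together with the spherical angle criterion for local geodesicity at each intermediate vertex, to decide which cyclic words can support a short local geodesic; the $26$ listed sequences in $\{A,B,C,D,E\}$ are precisely the survivors.

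The main obstacle is the enumeration itself. Each individual constraint (length, local geodesicity, allowed gluing of adjacent triangles) is simple, but the number of candidate configurations grows quickly and certifying that the lists of $2$, $4$, and $26$ galleries are exhaustive requires a systematic search. In \cite{ElMc02} this search is implemented by computer, and a fully hand-written proof would need a carefully structured case analysis exhausting all edge-gluing patterns compatible with the local geodesic condition and the bound $\mathrm{length}(\gamma)<2\pi$.
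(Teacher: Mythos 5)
The paper gives no proof of this proposition at all: it is imported verbatim from \cite{ElMc02}, where the exhaustive enumeration of annular, m\"obius and necklace galleries was carried out by a computer search, and your outline (gallery construction via Lemma~\ref{lem:bowditch}, the annulus/m\"obius/necklace trichotomy using the fact that all edges of the $A_3$ shape are shorter than $\frac{\pi}{2}$, bead decomposition at vertex crossings, and a length-and-angle constrained enumeration) is exactly the strategy underlying that search. Your closing acknowledgment that certifying the exhaustiveness of the lists of $2$, $4$ and $26$ galleries requires the systematic search of \cite{ElMc02} is precisely the position the paper itself takes, so your proposal matches the paper's treatment.
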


The relevence of Proposition~\ref{prop:a3} in the current setting is that
the diagonal link of a bounded graded rank~$4$ poset is a complex
built out of $A_3$ spherical simplices in a way that could have arisen
as a vertex link of a \pe complex built out of $\widetilde A_3$
shapes.  We will comment more on this connection in \S\ref{sec:artin}.
In particular, if the diagonal link of a bounded graded rank~$4$ poset
is not quite $\cat(1)$ then it contains a short unshrinkable local
geodesic loop that determines one of the $32$ specific galleries
listed above.

\begin{lem}[Loops and vertices]\label{lem:vertex}
  Let $K$ be the diagonal link of a bounded graded rank $4$ poset $P$.
  If $K$ is not quite $\cat(1)$ and $\gamma$ is a short unshrinkable
  locally geodesic loop in $K$, then $\gamma$ passes through a vertex
  of $K$.
\end{lem}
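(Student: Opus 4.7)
The plan is to argue by contradiction. Assume $\gamma$ is a short unshrinkable local geodesic loop in $K$ that meets no vertex of $K$. Since $K$ is the diagonal link of a rank~$4$ bounded graded poset, it is a \ps complex built from $A_3$ Coxeter shapes that arises as a vertex link of a \pe complex built from $\widetilde{A}_3$ shapes, and since $K$ is not quite $\cat(1)$, Proposition~\ref{prop:a3} applies to $\gamma$. The hypothesis that $\gamma$ avoids every vertex excludes all $26$ necklace galleries in the classification, since each necklace gallery is built from beads which by definition begin and end at vertices. Hence the gallery $L$ determined by $\gamma$ must be one of the two annular galleries shown in Figure~\ref{fig:a-cox} or one of the four m\"obius galleries shown in Figure~\ref{fig:m-cox}.

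The next step is to rule out each of these six cases in turn by producing a length-nonincreasing homotopy of $\gamma$ in $K$ that strictly decreases length somewhere, contradicting unshrinkability. Every $A_3$ triangle in $K$ has its right angle at its rank~$2$ vertex and two $\frac{\pi}{3}$ angles at its rank~$1$ and rank~$3$ vertices (Corollary~\ref{cor:triangles}), and two adjacent triangles in $L$ share an edge, hence two poset elements. Reading through each of the six galleries using this rank data translates the identification pattern into a specific closed combinatorial cycle in $P$ which constrains how $L$ immerses into $K$.

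The contracting homotopy is obtained by sweeping $\gamma$ across the strip of triangles making up $L$ via a one-parameter family of parallel curves, pushing $\gamma$ toward one of the two boundary edges of the strip. Because the $\frac{\pi}{3}$ acute corners of $\alpha_3$ force parallel curves to contract when pushed toward an acute vertex, the length of $\gamma$ strictly decreases under the sweep. The main obstacle is to make the sweep consistent with the boundary identification of $L$: for the two annular galleries the sweep closes up after one circuit, but for the four m\"obius galleries the half-twist obstructs a naive parallel deformation, and one must either sweep twice around the band so that the twist cancels, or enlarge $L$ by adjoining a triangle of $K$ along the boundary of $L$ to reroute the sweep. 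Carrying out this verification for each of the six figures, using the fact that $P$ is bounded and graded to guarantee that the required adjacent triangles exist in $K$, is the technical heart of the argument; the finiteness of the enumeration makes the exhaustive check tractable.
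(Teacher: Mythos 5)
Your reduction via Proposition~\ref{prop:a3} is the right starting point, and your treatment of the two annular galleries matches the paper: there the loop is shrinkable by sweeping it through parallel (``latitude'') curves, contradicting unshrinkability. But your handling of the four m\"obius galleries has a genuine gap. Your plan is to shrink the loop there as well, and neither of the two fixes you offer works. ``Sweeping twice around the band so that the twist cancels'' produces a homotopy of the doubled curve $\gamma^2$, not of $\gamma$; shrinking $\gamma^2$ says nothing about the unshrinkability of $\gamma$ (compare the systole of a projective plane: the double is contractible while the loop itself is an unshrinkable geodesic, and the core of a m\"obius-type gallery is exactly this kind of loop). ``Enlarging $L$ by adjoining a triangle of $K$ along its boundary'' requires knowing that suitable triangles of $K$ sit next to the immersed gallery in a way that permits a length-nonincreasing reroute, and nothing in the hypotheses guarantees this; the phrase ``using the fact that $P$ is bounded and graded to guarantee that the required adjacent triangles exist'' is precisely the unproved step. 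Indeed, the m\"obius galleries appear in the Elder--McCammond enumeration because they genuinely can carry short unshrinkable geodesics in complexes built from $A_3$ shapes, so no shrinking argument can dispose of them in general.

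The missing idea is that the grading of $P$ rules the m\"obius galleries out at the level of existence, not shrinkability: since $L$ immerses into the diagonal link of a rank~$4$ graded poset, every vertex of $L$ inherits a label from $\{1,2,3\}$ (the rank of the corresponding element of $P$), the three vertices of each triangle must get three distinct labels, and adjacent triangles share an edge, so once one triangle is labeled all labels are forced; going once around the band, the half-twist makes the forced labeling inconsistent in each of the four cases of Figure~\ref{fig:m-cox}. Hence no m\"obius gallery immerses in $K$ at all, and the only remaining possibility is a necklace gallery, which passes through a vertex. Your proof needs this (or some substitute) in place of the proposed sweeps; as written, the m\"obius case is not closed.
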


\begin{proof}
  Let $L$ be the gallery associated to $\gamma$.  If $L$ is an annular
  gallery then $\gamma$ is shrinkable via the analog of homotoping an
  equator through lines of latitude, contradicting our hypothesis.  If
  $L$ is a m\"obius gallery, then it is one of the four listed in
  Figure~\ref{fig:m-cox}.  Since $L$ immerses into the order complex
  $K$ of the diagonal link of a rank~$4$ poset we should be able to
  label each vertex of $L$ by the rank of the element of $P$ that
  corresponds to its image in $K$.  In particular, the three vertices
  of a triangle should receive three distinct numbers from the list
  $\{1,2,3\}$.  Once one triangle in $L$ is labeled, the remaining
  labels are forced and in each instance, the m\"obius strip cannot be
  consistently labelled.  As a result $L$ cannot be a m\"obius
  gallery.  The only remaining possibility is that $L$ is a necklace
  gallery and $\gamma$ passes through a vertex of $K$.
\end{proof}

\begin{figure}
\bt{cc}
\bt{c}\includegraphics{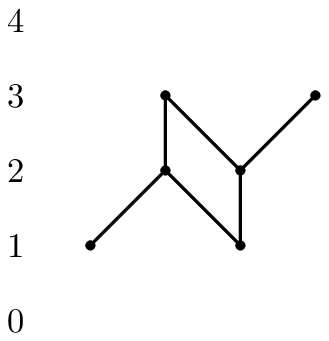}\et & \bt{c}\includegraphics{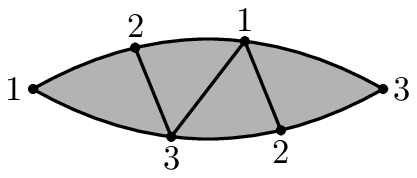}\et\\
\et
\caption{A poset configuration and the corresponding \ps configuration.\label{fig:lunes-E}}
\end{figure}

\begin{figure}
\bt{cc}
\bt{c}\includegraphics{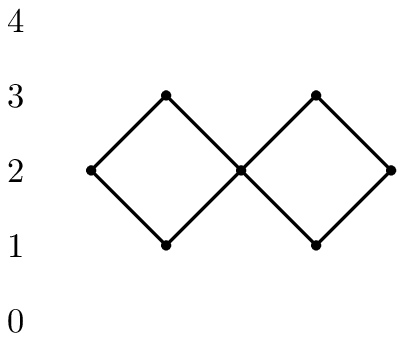}\et & \bt{c}\includegraphics{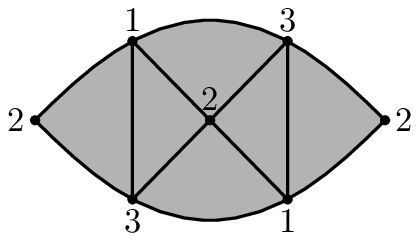}\et
\et
\caption{A poset configuration and the corresponding \ps configuration.\label{fig:lunes-D}}
\end{figure}

\begin{thm}[Restricting to the $1$-skeleton]\label{thm:1-skeleton}
  Let $P$ be a bounded graded poset of rank at most $4$.  If a local
  diagonal link of $P$ is not quite $\cat(1)$ then it contains a short
  local geodesic loop that remains in its $1$-skeleton.  In
  particular, when the orthoscheme complex of $P$ is not $\cat(0)$,
  $P$ contains a short spindle.
\end{thm}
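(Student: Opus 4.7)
The plan is to reduce to the rank~$4$ case, produce a short unshrinkable local geodesic loop $\gamma$ in the offending local diagonal link, constrain the combinatorial type of its gallery by combining Proposition~\ref{prop:a3} with Lemma~\ref{lem:vertex}, and then homotope each non-edge bead of that gallery onto an equal-length edge path in the $1$-skeleton.

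First I would dispense with the low-rank cases: if $P$ has rank at most~$3$ then every local diagonal link of $P$ has dimension at most~$1$, so any local geodesic already lies in the $1$-skeleton and there is nothing to prove. Hence I may assume $P$ has rank~$4$ and the offending local diagonal link $K$ is a \ps complex built from $A_3$ Coxeter shapes. Since $K$ is not quite $\cat(1)$, Lemma~\ref{lem:bowditch} provides a short unshrinkable local geodesic loop $\gamma \subset K$. The diagonal link of a rank-$4$ interval has exactly the local structure of a vertex link of a \pe complex built out of $\widetilde{A}_3$ Coxeter shapes, so Proposition~\ref{prop:a3} applies: the gallery $L$ associated with $\gamma$ is one of the two annular galleries of Figure~\ref{fig:a-cox}, one of the four m\"obius galleries of Figure~\ref{fig:m-cox}, or one of the $26$ enumerated necklace galleries assembled from the beads $A$, $B$, $C$, $D$, $E$ of Figure~\ref{fig:v-cox}. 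Lemma~\ref{lem:vertex} eliminates the annular and m\"obius possibilities, so $L$ must be a necklace gallery.

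The main work is to replace each bead of $L$ by an edge path in the $1$-skeleton of $K$ of the same length. The edge beads $A$ and $B$ already lie in the $1$-skeleton. For the beads $D$ and $E$, the combinatorial constraints that the bead imposes on $P$ (the rank pattern together with the complementarity forced by local geodesy at the bead's endpoint vertices) identify an element of the underlying interval whose corresponding vertex converts the bead into a spherical lune whose two boundary edge paths have equal length; this is the content of Figures~\ref{fig:lunes-E} and~\ref{fig:lunes-D}, and the equality of those lengths is a direct cosine identity coming from Proposition~\ref{prop:edge-length} applied to the rank data of the bead. The single-triangle bead $C$ admits an analogous lune replacement through the unique intermediate-rank vertex forced by its two endpoints. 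Performing these substitutions bead by bead yields a loop $\gamma' \subset K$ that lies entirely in the $1$-skeleton and has length equal to that of $\gamma$, and is therefore short.

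To see that $\gamma'$ is still locally geodesic, observe that each original transition vertex of $\gamma$ is retained in $\gamma'$ and its two transition vectors remain far apart there by hypothesis on $\gamma$, while at each new transition vertex introduced by a lune substitution the two flanking elements are complements in a subinterval of $P$ of rank at most~$3$, so Lemma~\ref{lem:low-transitions} certifies local geodesy. This proves the first assertion. For the ``in particular'' clause, suppose $\order{P}$ is not $\cat(0)$. Theorem~\ref{thm:ortho-curv} supplies an interval $P_{xy}$ whose local diagonal link is not quite $\cat(1)$; the assertion just proved furnishes a short local geodesic edge loop in that link; and by Proposition~\ref{prop:transitions} this loop is exactly the loop of a global spindle in $P_{xy}$, hence a short (local) spindle in $P$. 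The hard part of the plan is the bead-by-bead verification that each of $C$, $D$ and $E$ admits an equal-length edge substitution that remains locally geodesic at its newly introduced transition points, since this requires matching the explicit cosine formula of Proposition~\ref{prop:edge-length} against the complementarity and rank data forced by each bead type.
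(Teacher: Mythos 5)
Your overall strategy matches the paper's: reduce to rank~$4$, invoke Lemma~\ref{lem:bowditch} to get a short unshrinkable local geodesic loop, use Proposition~\ref{prop:a3} together with Lemma~\ref{lem:vertex} to force a necklace gallery, and then push the nontrivial beads into the $1$-skeleton by length-preserving homotopies across lunes, exactly as in Figures~\ref{fig:lunes-E} and~\ref{fig:lunes-D} for the beads $E$ and $D$. However, there is a genuine gap in your treatment of the bead $C$. You assert that ``the single-triangle bead $C$ admits an analogous lune replacement through the unique intermediate-rank vertex forced by its two endpoints,'' i.e.\ that a single $C$ can be traded for an edge path of equal length. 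No such replacement exists: in the $A_3$ diagonal link the only edge lengths are $\arccos\sqrt{1/3}\approx .304\pi$ and $\arccos(1/3)\approx .392\pi$, and the geodesic segment carried by a single bead $C$ (half of the length-$\pi$ segment carried by the pair $CC$, which the paper replaces by $ABA$) cannot be matched by any concatenation of such edges. This is precisely why the paper does not substitute $C$ bead by bead. Instead it examines the bead immediately following a $C$: it cannot be $B$ for rank reasons; if it is $A$, the poset supplies an extra triangle of $K$ (most of the configuration of Figure~\ref{fig:lunes-E}) that strictly shortens $\gamma$, contradicting unshrinkability; and only if it is another $C$ does one get the lune configuration of Figure~\ref{fig:lunes-D} allowing the replacement $CC\mapsto ABA$. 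Your bead-by-bead scheme therefore breaks down exactly on the enumerated necklaces containing an isolated $C$ (e.g.\ $A^2C$, $ABAC$, $CA^4$, $C$, $CD$, $C^3$), and the missing ingredient is the use of unshrinkability to rule out a $C$ adjacent to an $A$, plus the pairing argument for $CC$.

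A secondary, smaller issue: your certification of local geodesy of the new loop at the vertices introduced by a lune substitution (via Lemma~\ref{lem:low-transitions} and a claimed complementarity of the flanking elements) is not justified as stated; the paper avoids this by observing that each modification is length-nonincreasing and endpoint-preserving, so unshrinkability of the original loop is inherited, and iterating until the loop lies in the $1$-skeleton yields the desired short local geodesic there, after which Proposition~\ref{prop:transitions} produces the spindle just as you say.
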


\begin{proof}
  Since the diagonal link of an interval is $1$-dimensional when $P$
  has rank less than $4$, the first assertion is trivial in that case.
  Thus assume that $P$ has rank~$4$ and that we are considering the
  diagonal link $K = \lk(e_{\zero\one},\order{P})$.  Because $K$ is
  not quite $\cat(1)$, it is locally $\cat(1)$ but contains a short
  locally geodesic loop $\gamma$.  By Lemma~\ref{lem:bowditch} and
  Lemma~\ref{lem:vertex} we may also assume that $\gamma$ is both
  unshrinkable and that it is associated with a necklace gallery $L$.
  The necklace $L$ is a string of beads of type $A$, $B$, $C$, $D$ and
  $E$.  The bead of type $E$ is a lune, the portion of the geodesic it
  contains is of length $\pi$, and there is a length-preserving
  endpoint-preserving homotopy that moves this portion of $\gamma$
  into the boundary of $E$.  This has the effect of replacing $E$ with
  a sequence of edges ($AAB$ or $BAA$).  See Figure~\ref{fig:lunes-E}.
  Similarly, if $L$ contains a bead of type $D$, then $P$ contains a
  configuration that produces the lune shown in
  Figure~\ref{fig:lunes-D}.  The portion of the geodesic it contains
  is of length $\pi$, and there is a length-preserving
  endpoint-preserving homotopy that moves this portion of $\gamma$
  into the boundary of the lune.  This has the effect of replacing $D$
  with a sequence of edges $ABA$.  Thus we may assume that $L$
  contains no beads of type $D$ or $E$.

  Finally, suppose $L$ contains a bead of type $C$ and consider the
  bead immediately after it.  It cannot be of type $B$ since $C$ ends
  at a vertex of rank $2$ so it is either type $A$ or another bead of
  type $C$.  If type $A$ then we have all but one element of the
  configuration shown on the lefthand side of Figure~\ref{fig:lunes-E}
  and there is an additional triangle in $K$ that gives us a way to
  shorten $\gamma$, contradicting its unshrinkability.  On the other
  hand, if the next bead has type $C$ (and there are no obivous
  shortenings) then we have the configuration shown on the lefthand
  side of Figure~\ref{fig:lunes-D}. Thus there are triangles present
  in $K$ that enable us to perform a length-preserving endpoint
  preserving homotopy of this portion of $\gamma$ through beads of
  type $CC$ to a path in the $1$-skeleton passing through edges of
  type $ABA$.  In short, whenever $\gamma$ leaves the $1$-skeleton,
  there is a way to locally modify the path so that its length never
  increases and the new path passes through fewer $2$-cells.
  Iterating this process proves the first assertion and the second
  assertion follows from Proposition~\ref{prop:transitions}.
\end{proof}

Combining Theorem~\ref{thm:low-spindle} and
Theorem~\ref{thm:1-skeleton} establishes the following.

\setcounter{mainthm}{0}
\begin{mainthm}
  The orthoscheme complex of a bounded graded poset $P$ of rank at
  most~$4$ is $\cat(0)$ iff $P$ is a lattice with no short spindles.
\end{mainthm}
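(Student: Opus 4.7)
The proof is essentially a packaging of work already done in Theorems~\ref{thm:low-spindle} and~\ref{thm:1-skeleton}, so the plan is to assemble these pieces and account for the ``lattice'' phrasing. First I would note that the two phrasings ``$P$ is a lattice with no short spindles'' and ``$P$ has no short spindles'' are equivalent. Indeed, by Proposition~\ref{prop:spindle-bowtie} a bounded graded poset contains a bowtie iff it contains a spindle of girth~$4$, and every spindle of girth~$4$ is short (its loop in the diagonal link consists of four edges each of length less than $\frac{\pi}{2}$, so its total length is less than $2\pi$). Thus absence of short spindles forces $P$ to be a lattice via Proposition~\ref{prop:lattice-bowties}, and the lattice hypothesis is a consequence rather than an independent assumption.

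For the forward direction, assume $\order{P}$ is $\cat(0)$ and suppose toward a contradiction that $P$ contains a short spindle. Pass to the interval in which the spindle is global. Then Theorem~\ref{thm:low-spindle} applies: the spindle describes a short local geodesic loop in the corresponding local diagonal link, so that local diagonal link is not $\cat(1)$. By the orthoscheme link condition (Theorem~\ref{thm:ortho-curv}), this contradicts $\order{P}$ being $\cat(0)$.

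For the reverse direction, assume $\order{P}$ is not $\cat(0)$. Theorem~\ref{thm:ortho-curv} hands us an interval $P_{xy}$ whose diagonal link is not quite $\cat(1)$. If $P_{xy}$ has rank less than $4$ its diagonal link is one-dimensional and already lives in its own $1$-skeleton; otherwise Theorem~\ref{thm:1-skeleton} produces a short local geodesic loop in the $1$-skeleton. Either way, Proposition~\ref{prop:transitions} translates this $1$-skeletal loop into a global spindle in $P_{xy}$, hence a short spindle in $P$.

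Since nothing genuinely new is needed, there is no main obstacle at this assembly stage; the hard work was absorbed into Theorem~\ref{thm:1-skeleton}, where the classification of short geodesic galleries in $A_3$-shape links (Proposition~\ref{prop:a3}) and the explicit lune-shortening moves of Figures~\ref{fig:lunes-E} and~\ref{fig:lunes-D} are used to push any candidate short local geodesic into the $1$-skeleton. Once that reduction is in hand, combining it with Theorem~\ref{thm:low-spindle} yields the biconditional, and the lattice reformulation follows from the spindle/bowtie dictionary.
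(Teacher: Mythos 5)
Your proposal is correct and follows the paper's own route exactly: the paper proves this theorem by combining Theorem~\ref{thm:low-spindle} (short spindles give short local geodesic loops, hence not $\cat(0)$) with Theorem~\ref{thm:1-skeleton} (a failure of $\cat(0)$ forces a short geodesic loop in a $1$-skeleton, hence a short spindle via Proposition~\ref{prop:transitions}), with the lattice clause absorbed by the spindle/bowtie dictionary of Proposition~\ref{prop:spindle-bowtie}. Your assembly, including the reduction to the interval where the spindle is global and the appeal to Theorem~\ref{thm:ortho-curv}, is just a more explicit spelling-out of the same argument.
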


As a quick application note that Theorem~\ref{main:poset} implies that
every modular poset of rank at most $4$ has a $\cat(0)$ orthoscheme
complex with a $\cat(1)$ diagonal link.  In particular, the theorem
shows that the linear subspace poset $L_4(\F)$ has a $\cat(0)$
orthoscheme complex for every field $\F$ and its diagonal link, which
is a thick spherical building of type $A_3$, is $\cat(1)$.  That the
link is $\cat(1)$ is well-known.  See, for example, \cite{AbBr08} or
\cite{Ly05}.

\section{Artin groups}\label{sec:artin}

In this final section we first apply Theorem~\ref{main:poset} to a
poset closely associated with the $5$-string braid group and then, at
the end of the section, we extend the discussion to the other four
generator Artin groups of finite-type.  For the braid group, the
relevant poset is the lattice of noncrossing partitions.

\begin{figure}
  \includegraphics[scale=1]{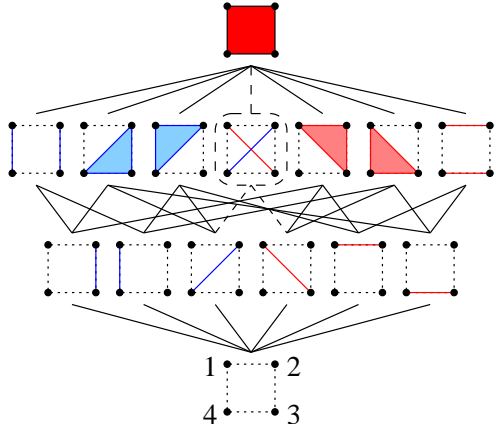}
  \caption{The figure shows the partition lattice $\Pi_4$ with the
    blocks indicated by their convex hulls.  If the portion surrounded
    by a dashed line is removed, the result is the noncrossing
    partition lattice $NC_4$.\label{fig:nc4}}
\end{figure}

\begin{defn}[Partitions and noncrossing partitions]
  Recall that a \emph{partition} of a set is a pairwise disjoint
  collection of subsets (called \emph{blocks}) whose union is the
  entire set. These partitions are naturally ordered by refinement,
  i.e. one partition is less than another is each block of the first
  is contained in some block of the second.  The resulting bounded
  graded lattice is called the \emph{partition lattice}.  Its maximal
  element has only one block, its minimal element has singleton blocks
  and the rank of an element is determined by the number of blocks it
  contains.  When the underlying set is $[n] =\{1,2,\ldots,n\}$ the
  partition lattice is denoted $\Pi_n$ and it has rank $n-1$. The
  rank~$3$ poset $\Pi_4$ is shown in Figure~\ref{fig:nc4}.  A
  \emph{noncrossing} partition is a partition of the vertices of a
  regular $n$-gon (consecutively labeled by the set $[n]$) so that the
  convex hulls of its blocks are pairwise disjoint.
  Figure~\ref{fig:ncross} shows the noncrossing partition
  $\{\{1,4,5\}, \{2,3\}, \{6,8\}, \{7\}\}$.  A partition such as
  $\{\{1,4,6\}, \{2,3\}, \{5,8\}, \{7\}\}$ would be crossing.  For
  $n=4$, the only difference between $\Pi_4$ and $NC_4$ is the
  partition $\{\{1,3\},\{2,4\}\}$ which is not noncrossing.  The
  subposet of noncrossing partitions is also a bounded graded rank~$n$
  lattice.  In addition, $NC_n$ is self-dual in the sense that there
  exists an order-reversing bijection from $NC_n$ to itself
  (\cite{Br01},\cite{Mc06}).
\end{defn}

\begin{figure}
\includegraphics[scale=1]{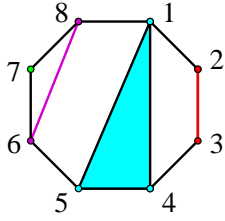}
\caption{A noncrossing partition of the set $[8]$.\label{fig:ncross}}
\end{figure}

The close connection between the braid groups and the noncrossing
partition lattice can briefly be described as follows.  There is a way
of pairwise identifying faces of the orthoscheme complex of $NC_n$ by
isometries so that the result is a one-vertex complex $X$ with a
contractible universal cover and the $n$-string braid group as its
fundamental group.  See \cite{Br01} for details.  Moreover, under the
orthoscheme metric, the metric space $X$ metrically splits as a direct
product of a compact \pe complex $Y$ and a circle of length
$\sqrt{n}$.  We should note, however, that this split is not visible
in the cell structure of $X$.  The splitting into a direct product is
easiest to see in the universal cover $\widetilde X \cong \widetilde Y
\times \R$ where the standard $n$-orthoschemes naturally fit together
into columns.

\begin{defn}[Columns]\label{def:columns}
  Fix $n\in \N$ and consider the following collection of points in
  $\R^n$.  For each integer $m$ write $m = qn+r$ where $q$ and $r$ are
  the unique integers with $0 \leq r < n$ and let $v_m$ denote the
  point $((q+1)^r,q^{n-r}) \in \R^n$ using the same shorthand notation
  as in the proof of Proposition~\ref{prop:edge-length}.  To
  illustrate, when $m=-22$ and $n=8$ then $q=-3$, $r=2$ and $v_{-22}
  =(-2^2,-3^6)$.  Note that the vector from $v_m$ to $v_{m+1}$ is a
  unit basis vector and that the particular unit basis vector is
  specified by the value of $r$.  In particular, any $n+1$ consecutive
  vertices of the bi-infinite sequence $(\ldots v_{-2}, v_{-1}, v_0,
  v_1, v_2 \ldots)$ are the vertices of a standard $n$-orthoscheme.
  It is easy to check that the standard $n$-orthoscheme
  $\ortho(v_0,v_1,\ldots,v_n)$ is defined by the inequalities $1 \geq
  x_1 \geq x_2 \geq \cdots \geq x_n \geq 0$ and that the union of the
  orthoschemes defined by $n+1$ consecutive vertices of this sequence
  is a convex shape defined by the inequalities $x_1 \geq x_2 \geq
  \cdots \geq x_n \geq x_1-1$.  We call this configuration of
  orthoschemes a \emph{column}.  Because these equalities are
  invariant under the addition of multiples of the vector $(1^n)$, the
  result is metrically a direct product of a $(n-1)$-dimensional shape
  with the real line.  It turns out that the cross-section
  perpendicular to the direction $(1^n)$ is a Euclidean polytope known
  as the \emph{Coxeter simplex of type $\widetilde A_{n-1}$} and that
  every vertex of this polytope has a link isometric to the diagonal
  link of the standard $n$-orthoscheme, namely, the convex spherical
  polytope of type $A_{n-1}$ that we called $\alpha_{n-1}$.  To
  illustrate, when $n=3$ the column just defined is a direct product
  of an $\widetilde A_2$ Euclidean polytope with $\R$ and the diagonal
  link of a $3$-orthoscheme is the spherical polytope of type $A_2$.
  In this case the \pe shape $\widetilde A_2$ is an equilateral
  triangle and the \ps shape $A_2$ is an arc of length
  $\frac{\pi}{3}$.
\end{defn}

Returning to the Eilenberg-MacLane space for the $n$-string braid
group, recall that a group is called a \emph{$\cat(0)$ group} if it
acts properly discontinuously cocompactly by isometries on a complete
$\cat(0)$ space.  For our purposes, the only fact we need is that the
fundamental group of any compact locally $\cat(0)$ \pe complex is a
$\cat(0)$ group since its action on the universal cover by deck
transformations has all of the necessary properties.  If $X$ is the
Eilenberg-MacLane space for the $n$-string braid group built from the
orthoscheme complex of the noncrossing partition lattice $NC_n$, then
as a metric space $X$ is a direct product of a circle of length
$\sqrt{n}$ and a \pe complex $Y$ built out of $\widetilde A_{n-1}$
shapes.  Moreover, the link of the unique vertex in $Y$ is isometric
to the diagonal link of $NC_n$.  As a consequence, if the orthoscheme
complex of the noncrossing partition lattice $NC_n$ is $\cat(0)$, then
$Y$ is locally $\cat(0)$, $X$ is locally $\cat(0)$ and the $n$-string
braid group is a $\cat(0)$ group.  In short we have the following
implication.

\begin{prop}[Partitions and braids]\label{prop:part-braid}
  If the orthoscheme complex of the noncrossing partition lattice
  $NC_n$ is $\cat(0)$, then the $n$-string braid group is a $\cat(0)$
  group.
\end{prop}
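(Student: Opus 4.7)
The plan is to reduce the statement to the curvature criterion of Theorem~\ref{thm:ortho-curv} via the metric splitting recalled in the paragraphs immediately before the proposition. Let $X$ be the compact \pe Eilenberg-MacLane space for the $n$-string braid group built from $\order{NC_n}$, and recall that $X$ metrically splits as $Y \times S^1$, where $S^1$ has length $\sqrt{n}$ and $Y$ is a compact \pe complex with a unique vertex $v$ whose link is isometric to $\lk(e_{\zero\one}, \order{NC_n})$. It suffices to show $X$ is locally $\cat(0)$, because then its universal cover is a complete $\cat(0)$ space on which $\pi_1(X)$ acts properly, cocompactly, and by isometries via deck transformations, which is the definition of a $\cat(0)$ group.

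A product of \pe complexes is locally $\cat(0)$ iff both factors are, and $S^1$ is trivially so, so the task reduces to showing $Y$ is locally $\cat(0)$. Since every cell link in $Y$ is an iterated link of $\lk(v,Y)$ by Proposition~\ref{prop:lk-lk}, and $Y$ has finite shapes, Proposition~\ref{prop:curv-lks} and the discussion following Definition~\ref{def:curv} reduce the problem further to showing $\lk(v,Y)$ is $\cat(1)$. Under the identification $\lk(v,Y) \cong \lk(e_{\zero\one}, \order{NC_n})$, the hypothesis that $\order{NC_n}$ is $\cat(0)$ combined with Theorem~\ref{thm:ortho-curv} yields that every local diagonal link of $NC_n$ is large. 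Applying Proposition~\ref{prop:osc-links} to cells of this global diagonal link shows that each proper cell link factors as a spherical join of local elementary links of intervals of $NC_n$; these factors are either endpoint links, which are $\cat(1)$ by Lemma~\ref{lem:endpt-links}, or local diagonal links of intervals, which are $\cat(1)$ by Theorem~\ref{thm:ortho-curv} applied to those intervals. By Proposition~\ref{prop:cat1-joins}, the spherical join is $\cat(1)$, so the global diagonal link is locally $\cat(1)$ and large, hence $\cat(1)$.

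The main obstacle is not the curvature argument itself --- which is essentially automatic once the machinery is in place --- but the structural facts being cited: the existence of the Eilenberg-MacLane space $X$, the metric splitting $X \cong Y \times S^1$, and the identification of $\lk(v,Y)$ with the diagonal link of $NC_n$. These rest on the construction of $X$ from $NC_n$ in \cite{Br01} and on the column geometry of Definition~\ref{def:columns}; for the proof of this proposition I would quote them and then chain together the reductions above with a single application of Theorem~\ref{thm:ortho-curv}.
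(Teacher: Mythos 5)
Your proposal is correct and takes essentially the same route as the paper, which establishes this proposition in the paragraph immediately preceding its statement: quote from \cite{Br01} (and the column geometry) that $X$ splits metrically as $Y\times S^1$ with the link of the unique vertex of $Y$ isometric to the diagonal link of $NC_n$, then deduce that $Y$ and hence $X$ is locally $\cat(0)$, so the braid group is a $\cat(0)$ group. The only quibble is that your middle step is more elaborate than needed and your citations slightly overstate what Lemma~\ref{lem:endpt-links} and Theorem~\ref{thm:ortho-curv} literally assert (``cannot be not quite $\cat(1)$'' and ``large'' rather than ``$\cat(1)$''): since every cell link of a $\cat(0)$ \pe complex is $\cat(1)$ by the definitions together with Proposition~\ref{prop:lk-lk}, the diagonal link of $\order{NC_n}$ is $\cat(1)$ directly, with no need for the join decomposition of Proposition~\ref{prop:osc-links}.
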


Since we (firmly) believe that the orthoscheme complex of $NC_n$ is
indeed $\cat(0)$ for every $n$, we formalize this assertion as a
conjecture.

\begin{conj}[Curvature of $NC_n$]\label{conj:ncn}
  For every $n$, the orthoscheme complex of the noncrossing partition
  lattice $NC_n$ is $\cat(0)$ and as a consequence, the braid groups
  are $\cat(0)$ groups.
\end{conj}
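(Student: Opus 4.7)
The plan is to proceed by strong induction on $n$, combining the product decomposition of intervals in $NC_n$ with an extension of the rank-$4$ techniques from Section~\ref{sec:low}. By Proposition~\ref{prop:part-braid} it suffices to prove the first assertion, that $\order{NC_n}$ is $\cat(0)$. By Theorem~\ref{thm:ortho-curv} this in turn reduces to showing that every local diagonal link of $NC_n$ is large.

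The first step is an inductive reduction. It is a classical fact about noncrossing partitions that every interval $NC_n(x,y)$ is isomorphic as a poset to a product $NC_{k_1} \times NC_{k_2} \times \cdots \times NC_{k_r}$ with each $k_i \leq n$ and $\sum k_i \leq n$, with strict inequality unless $(x,y)=(\zero,\one)$. Combining the Products Remark (the orthoscheme complex of a product of graded posets is isometric to the product of orthoscheme complexes) with Proposition~\ref{prop:cat1-joins} (a spherical join is $\cat(1)$ iff each factor is), the diagonal link of any proper interval decomposes as a spherical join of diagonal links of strictly smaller noncrossing partition lattices. Thus, assuming inductively that $\order{NC_k}$ is $\cat(0)$ for all $k<n$, every local diagonal link of a proper interval of $NC_n$ is already $\cat(1)$, and the only remaining task is to show that the full diagonal link $\lk(e_{\zero\one}, \order{NC_n})$ is itself $\cat(1)$. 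Since every link of a proper cell of this complex is a spherical join of $\cat(1)$ complexes (by Proposition~\ref{prop:osc-links} and the inductive hypothesis), the full diagonal link is at worst not quite $\cat(1)$, and by Lemma~\ref{lem:bowditch} we need only exclude short local geodesic loops.

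The second step is combinatorial: $NC_n$ should contain no short spindles. Complements in $NC_n$ are governed by Kreweras complementation, and the self-duality of $NC_n$ together with the product decomposition of intervals gives a tight grip on the ranks and coranks of the elements of a spindle. Using Proposition~\ref{prop:edge-length} to compute edge lengths in $\alpha_{n-1}$, the aim is to prove a noncrossing analog of Proposition~\ref{prop:modular}: that every global spindle in $NC_n$ has length at least $2\pi$. The key point is that a complementary pair $(x,y)$ in $NC_n$ satisfies a rank/corank duality mirroring the modular case (each block of $x$ splits into blocks of $y$ in a structured way), so the lower bound on edge lengths along a zig-zag of complementary pairs should go through as in Proposition~\ref{prop:modular}.

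The third step is the hard part, and the main obstacle: a generalization of Theorem~\ref{thm:1-skeleton} that pushes an arbitrary short local geodesic loop into the $1$-skeleton of the diagonal link without increasing its length. In rank $4$ this invoked the computer-aided enumeration of $\widetilde A_3$ galleries (Proposition~\ref{prop:a3}), and no such enumeration is available in higher rank. A promising route is to exploit the column structure from Definition~\ref{def:columns}: the universal cover of the Eilenberg-MacLane space $X$ splits as $\widetilde Y \times \R$ where $\widetilde Y$ is built out of $\widetilde A_{n-1}$ Coxeter shapes, so the diagonal link of $NC_n$ embeds as a vertex link in a $\widetilde A_{n-1}$ Euclidean complex and one can hope to straighten geodesics into the $1$-skeleton cell-by-cell by iteratively homotoping portions through lune-type configurations (the higher-rank analogs of Figures~\ref{fig:lunes-E} and~\ref{fig:lunes-D}), while invoking the inductive hypothesis on proper intervals to ensure such homotopies are available locally. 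Once both the combinatorial and the straightening steps are in hand, Proposition~\ref{prop:transitions} converts a short $1$-skeleton geodesic into a short spindle, contradicting step two and completing the induction; Proposition~\ref{prop:part-braid} then yields that the $n$-string braid group is $\cat(0)$.
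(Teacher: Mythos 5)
Be aware first that this statement is stated as a conjecture in the paper: the authors prove it only for $n=5$ (via Theorem~\ref{main:poset} and Proposition~\ref{prop:nc4}) and offer Proposition~\ref{prop:apartments} as circumstantial evidence, so there is no paper proof to match. Your proposal is a program rather than a proof, and two of its three steps have genuine gaps. The decisive one is step two: the claimed ``rank/corank duality mirroring the modular case'' for complements in $NC_n$ is false, because $NC_n$ is not a modular lattice for $n\geq 4$ in the sense of the paper's definition. Concretely, in $NC_4$ the partitions $x=\{\{1,2\},\{3,4\}\}$ and $y=\{\{1,4\},\{2,3\}\}$ are complements ($x\wedge y=\zero$, $x\vee y=\one$) and both have rank~$2$ in a rank~$3$ lattice, so a rank~$i$ element can have a complement whose corank is not $i$. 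Hence the argument of Proposition~\ref{prop:modular}, which hinges exactly on that rank/corank symmetry to force each pair of consecutive spindle edges to contribute length $\pi$, does not transfer, and ``$NC_n$ has no short spindles'' is left unproved by your sketch. (The paper handles $NC_5$ by an ad hoc rank-$4$ argument using Proposition~\ref{prop:short-spindles} and self-duality, not by modularity.)

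Step three is the other gap, and you essentially concede it: Proposition~\ref{prop:a3} is a computer enumeration specific to $\widetilde A_3$ shapes (rank~$4$), and no analog exists in higher rank; ``hoping to straighten'' an arbitrary short local geodesic into the $1$-skeleton through higher-rank lune configurations is precisely the unresolved content of the Poset Curvature Conjecture, so nothing closes the loop even if step two were repaired. Your step one is essentially sound, with one correction: the diagonal link of a product interval is not just the spherical join of the factors' diagonal links --- each product splitting contributes an extra $\sph^0$ factor (e.g.\ the diagonal link of $B_1\times B_1$ is $\sph^0$, while both factors have empty diagonal link). Since $\sph^0$ is $\cat(1)$ this does not harm the inductive conclusion that local diagonal links of proper intervals are $\cat(1)$, but as written the decomposition is wrong. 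As it stands, then, the proposal reduces the conjecture to two open statements rather than proving it.
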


One reason we believe that Conjecture~\ref{conj:ncn} is true has to do
with the close connection between noncrossing partitions, partitions
and linear subspaces.

\begin{rem}[Partitions and linear subspaces]
  If $\F$ is a field and $\F^n$ is a vector space with a fixed
  coordinate system then there is a natural injective map from $\Pi_n$
  to $L_n(\F)$ that sends each partition to the subspace of vectors
  where the sum of the coordinates whose indices all belong to the
  same block sum to $0$.  For example, the partition $\{\{1,3,4\},
  \{2,5\}, \{6\}, \{7\}\}$ is sent to the $3$-dimensional subspace of
  $\F^7$ satisfying the equations $x_1+x_3+x_4 = 0$, $x_2+x_5=0$,
  $x_6=0$, and $x_7=0$.  The partition with one singleton blocks is
  sent to the $0$-dimensional subspace and the partition with one
  block is sent to the $(n-1)$-dimensional subspace where all
  coordinates sum to $0$.  Thus the map from $\Pi_n$ to $L_n(\F)$ can
  be restricted to a map from $\Pi_n$ to $L_{n-1}(\F)$ but at the cost
  of being harder to describe.  The relationship between the noncrossing
  partition lattice, the partition lattice and the lattice of linear
  subspaces is therefore $NC_n \subset \Pi_n \hookrightarrow
  L_{n-1}(\F)$.
\end{rem}

As we remarked earlier, the diagonal link of $L_{n-1}(\F)$ is a
$\cat(1)$ complex known as a thick spherical building.  Thus, the
inclusion just established means that the diagonal link of $NC_n$ is a
subcomplex of a thick spherical building.  For those familiar with the
structure of buildings, we note that a much stronger statement is
true.

\begin{prop}[Partitions and apartments]\label{prop:apartments}
  Every chain in $NC_n$ belongs to a boolean subposet.  As a
  consequence, for every field $\F$, the diagonal link of $NC_n$ is a
  union of apartments in the thick spherical building constructed as
  the diagonal link of $L_{n-1}(\F)$.
\end{prop}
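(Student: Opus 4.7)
The plan is to attach to each maximal chain of $NC_n$ a noncrossing spanning tree on the $n$ polygon vertices, recover from that tree a boolean sublattice of $NC_n$ containing the chain, and then observe that under $\Pi_n \hookrightarrow L_n(\F)$ this boolean sublattice is mapped onto an apartment of the thick spherical building sitting inside the diagonal link of $L_{n-1}(\F)$.

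First I would extend the given chain to a maximal chain $\zero = \pi_0 \lessdot \pi_1 \lessdot \cdots \lessdot \pi_{n-1} = \one$ in $NC_n$. Each covering relation merges two blocks of $\pi_{i-1}$, and one can draw a chord $a_i$ connecting a vertex of one block to a vertex of the other so that $a_i$ crosses neither the earlier chords $a_1, \ldots, a_{i-1}$ nor the boundary of any block of $\pi_{i-1}$; after $n-1$ steps this yields a noncrossing spanning tree $T = \{a_1, \ldots, a_{n-1}\}$ on the $n$ polygon vertices. For each subset $S \subseteq T$ define $\pi(S) \in NC_n$ to be the partition whose blocks are the connected components of the graph $(V, S)$, which is noncrossing because $T$ is. I would then show that $S \mapsto \pi(S)$ is an injective, order-preserving map from $2^T$ to $NC_n$ whose image contains the original chain and is closed under the join and meet of $NC_n$: the join is $\pi(S_1 \cup S_2)$ since this is a common upper bound and any common upper bound must merge all vertices joined by edges of $S_1 \cup S_2$, and the meet is $\pi(S_1 \cap S_2)$ since any two vertices lying in the same component of both $(V, S_1)$ and $(V, S_2)$ are joined by the unique $T$-path between them, which must then use only edges of $S_1 \cap S_2$.

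For the second assertion I would compose with the injection from the preceding remark. Under $NC_n \subset \Pi_n \hookrightarrow L_n(\F)$, the atom of $NC_n$ corresponding to the chord $a_i = \{j,k\}$ maps to the line spanned by $e_j - e_k$ inside the hyperplane $H = \{x \in \F^n : \sum_\ell x_\ell = 0\}$, and more generally $\pi(S)$ maps to the subspace spanned by $\{e_j - e_k : \{j,k\} \in S\}$ inside $H \cong \F^{n-1}$. The $n-1$ vectors indexed by the edges of a spanning tree are linearly independent, so they form a frame of $H$, and the $2^{n-1}$ subspaces spanned by subsets of this frame form precisely the boolean sublattice of $L_{n-1}(\F)$ whose diagonal link is an apartment of the thick spherical building of type $A_{n-2}$. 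Hence the image of each boolean sublattice from the previous step is an apartment, and since every simplex in the diagonal link of $NC_n$ corresponds to some chain in $NC_n$ and every chain lies in some such boolean sublattice, the diagonal link of $NC_n$ is covered by apartments. The step I expect to be most delicate is the inductive construction of the chords $a_i$ together with the verification that every subset $S \subseteq T$ really does produce a noncrossing partition: this is the only place in the argument where the planar/geometric structure of $NC_n$ is genuinely used, rather than just the abstract lattice structure of $\Pi_n$, and it is also what distinguishes the behavior of $NC_n$ from that of $\Pi_n$.
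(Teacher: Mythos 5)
Your proposal is correct and follows essentially the same route as the paper's (explicitly sketched) argument: extend the chain to a maximal chain, extract a noncrossing spanning tree whose initial edge-sets recover the chain, take connected components of arbitrary edge subsets to get a boolean subposet, and identify its image in $L_{n-1}(\F)$ with the frame $\{e_j-e_k\}$ spanning an apartment. You supply somewhat more detail than the paper on the lattice-theoretic closure and the linear-algebra identification of apartments, and you correctly flag the same point the paper glosses over (constructing the noncrossing tree and checking subforests give noncrossing partitions).
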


\begin{proof}[Proof sketch]
  Since Proposition~\ref{prop:apartments} is not needed below, we shall not
  give a complete proof, but the rough idea goes as follows.  Every
  chain of noncrossing paritions can be extended to a maximal chain,
  and given any maximal chain, it is possible to systematically
  extract a planar spanning tree of the regular $n$-gon with edges
  labeled $1$ through $n-1$ such that the connected components of the
  graph containing only edges $1$ through $i$ are the blocks of the
  rank $i$ noncrossing partition in the chain.  Once such a labeled
  spanning tree has been found, the noncrossing partitions that arise
  from the connected components of the graph with an arbitrary subset
  of these edges form a boolean subposet of $NC_n$.  The second
  assertion follows since boolean subposets give rise to spheres in
  the diagonal link that are the apartments of the spherical building.
\end{proof}

The fact that the diagonal link of $NC_n$ is a union of apartments
inside a thick spherical building is circumstantial evidence that the
diagonal link is $\cat(1)$, the orthoscheme complex of $NC_n$ is
$\cat(0)$ and that the corresponding Eilenberg-MacLane space for the
$n$-string braid group is $\cat(0)$.  By Theorem~\ref{main:poset},
these conjectures are true for $n=5$.

\begin{prop}[Curvature of $NC_5$]\label{prop:nc4}
  The orthoscheme complex of $NC_5$ is $\cat(0)$ and, as a
  consequence, the $5$-string braid group is a $\cat(0)$ group.
\end{prop}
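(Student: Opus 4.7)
The plan is to derive the proposition from Theorem~\ref{main:poset} and Proposition~\ref{prop:part-braid} by verifying the combinatorial hypothesis of Theorem~\ref{main:poset} for $NC_5$. Since $NC_5$ is a bounded graded lattice of rank $4$, Theorem~\ref{main:poset} reduces the $\cat(0)$ claim for the orthoscheme complex to the statement that $NC_5$ contains no short spindles, and Proposition~\ref{prop:part-braid} then immediately transports the orthoscheme conclusion to the braid group conclusion. So the entire proof is concentrated in checking the spindle condition.

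To narrow the search, I would invoke Proposition~\ref{prop:short-spindles}. Because $NC_5$ is a lattice it has no spindles of girth $4$, and in a rank-$4$ lattice the only remaining short spindles are global spindles of girth $6$ whose six elements alternate between two adjacent ranks. There are three adjacent rank-pairs in $NC_5$, namely $\{1,2\}$, $\{2,3\}$ and $\{3,4\}$, and the self-duality of $NC_n$ cited in the paper identifies the first and third cases, leaving only two configurations to rule out.

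For the verification itself I would carry out an explicit combinatorial enumeration. The rank-$1$ elements of $NC_5$ are the noncrossing transpositions, the rank-$2$ elements are the noncrossing partitions with three blocks and the rank-$3$ elements are the noncrossing partitions with two blocks. A hypothetical spindle $(x_1,\ldots,x_6)$ alternating between the chosen ranks is constrained by complementarity conditions $x_{2j-1} \vee x_{2j+1} = x_{2j}$ in the lower interval together with the dual conditions $x_{2j} \wedge x_{2j+2} = x_{2j+1}$ in the upper interval. These translate into a small system of constraints on triples of atoms in rank-$2$ (respectively rank-$3$) noncrossing intervals, and the rigidity of the noncrossing condition — in particular the unique complement property in the relevant small intervals — forces any candidate six-tuple to collapse. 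A finite case analysis (by hand or a brief computer enumeration) shows no valid six-tuple exists, and the proposition then follows by assembling the implications above.

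The main obstacle is organising this enumeration so that it is transparent rather than a brute-force search. A more conceptual approach would use Proposition~\ref{prop:apartments}: every chain of $NC_5$ lies inside a boolean apartment, and boolean lattices have no short spindles, so one would like to argue that any girth-$6$ spindle is confined to a single apartment. Making that confinement rigorous is not immediate, however, because a spindle is a zig-zag of six chains rather than a single chain, and a priori each of its edges could force a different apartment; so the cleanest route is almost certainly the direct combinatorial check sketched above, with Proposition~\ref{prop:part-braid} handing us the $\cat(0)$ group statement for free.
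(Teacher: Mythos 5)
Your reduction is exactly the paper's: cite Theorem~\ref{main:poset} together with Proposition~\ref{prop:short-spindles} and the self-duality of $NC_5$ to reduce everything to ruling out a single girth-$6$ configuration alternating between ranks $1$ and $2$, and then invoke Proposition~\ref{prop:part-braid} for the group-theoretic conclusion. The gap is in the one step that actually carries content: you never perform the verification, and the constraint system you propose to enumerate is not the spindle condition. You list only $x_{2j-1}\vee x_{2j+1}=x_{2j}$ and $x_{2j}\wedge x_{2j+2}=x_{2j+1}$, omitting the other half of complementarity, namely $x_{2j-1}\wedge x_{2j+1}=\zero$ and, crucially, $x_{2j}\vee x_{2j+2}=\one$. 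This omission is fatal rather than cosmetic: with only your listed constraints $NC_5$ \emph{does} contain six-tuples, for instance the atoms $\{1,2\}$, $\{2,3\}$, $\{3,4\}$ together with their pairwise joins $\{1,2,3\}$, $\{2,3,4\}$ and $\{\{1,2\},\{3,4\}\}$, so the enumeration you describe would find candidates and fail to ``collapse'' them. Moreover, the heuristic you offer in place of the check --- a ``unique complement property in the relevant small intervals'' --- is false: lower intervals under a rank-$2$ element with a $3$-element block are copies of $NC_3$, and upper intervals over an atom are copies of $NC_4$, and in neither are complements unique.

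The paper closes exactly this gap with a two-line combinatorial argument that your write-up is missing. The three rank-$1$ elements of the putative spindle are single edges of the pentagon; the requirement that consecutive atoms have rank-$2$ joins forces the three edges to be pairwise noncrossing (a crossing pair joins to a rank-$3$ element), and they cannot form a triangle without making the three rank-$2$ elements coincide, violating distinctness. Hence the union of the three edges is a forest and the join of all three atoms has rank $3$. But that rank-$3$ element is an upper bound for each pair of consecutive rank-$2$ elements of the spindle, whereas the complementarity condition in the upper intervals demands that each such pair join to $\one$ --- precisely the condition you dropped. Supplying this argument (or, alternatively, genuinely specifying and running an enumeration against the \emph{full} complementarity conditions) is what your proposal still needs; the apartment-based alternative you sketch is, as you suspect, not a substitute, since a spindle need not lie in a single boolean subposet.
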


\begin{proof}
  Since the rank~$4$ poset $NC_5$ is known to be a lattice, by
  Theorem~\ref{main:poset} and Proposition~\ref{prop:short-spindles} we only
  need to check that $NC_5$ does not contain a global spindle of
  girth~$6$ whose elements alternate between adjacent ranks.
  Moreover, because $NC_5$ is self-dual, it is sufficient to rule out
  the configuration on the lefthand side of Figure~\ref{fig:bad}.
  Finally, if there were such a configuration, the three rank~$1$
  elements would correspond to noncrossing parititions each containing
  a single edge and the fact that they pairwise have rank~$2$ joins
  indicates that these edges are pairwise noncrossing.  But under
  these conditions, the join of all three elements will have rank~$3$
  contrary to the desired configuration.  Thus $NC_5$ has no short
  spindles, its orthoscheme complex is $\cat(0)$ and the $5$-string
  braid group is a $\cat(0)$ group.
\end{proof}

We should note that we originally proved that the $5$-string braid
group is a $\cat(0)$ group in a more direct fashion (unpublished)
shortly after the first author introduced his new Eilenberg-MacLane
spaces for the braid groups \cite{Br01}, a direct computation carried
out independently and contemporaneously by Daan Krammer (also
unpublished).
And finally, we indicate how the above analysis of the $5$-string
braid group can be extended to cover the other four-generator Artin
groups of finite-type.  The posets and complexes defined via the
symmetric group in \cite{Br01} were extended to the other finite
Coxeter groups in \cite{BrWa02}.  The first author's work with Colum
Watt produces bounded graded lattices with a uniform definition that
can be used to construct Eilenberg-MacLane spaces for groups called
Artin groups of finite-type.  We begin by roughly describing these
additional posets.

\begin{defn}[$W$-noncrossing partitions]
  Let $W$ be a finite Coxeter group with standard minimal generating
  set $S$ and let $T$ be the closure of $S$ under conjugacy.  The set
  $S$ is called a \emph{simple system} and $T$ is the set of all
  \emph{reflections}.  A \emph{Coxeter element} in $W$ is an element
  $\delta$ that is a product of the elements in $S$ in some order.
  For the finite Coxeter groups, the order chosen is irrelevant since
  the result is well-defined up to conjugacy.  The poset of
  \emph{$W$-noncrossing partitions} $NC_W$ is the derived from the
  minimum length factorizations of $\delta$ into elements of $T$ or
  equivalently, it represents an interval in the Cayley graph of $W$
  with respect to $T$ that starts at the identity and ends at
  $\delta$.  The name alludes to the fact that when $W$ is the
  symmetric group $\sym_n$, a Coxeter element is an $n$-cycle and the
  poset $NC_W$ is isomorphic to the lattice of noncrossing partitions
  previously defined.
\end{defn}

For each finite Coxeter group $W$, the poset $NC_W$ is a finite
bounded graded lattice whose rank~$n$ is the size of the standard
minimal generating set $S$ for $W$.  As was the case with the braid
groups, there is a one-vertex complex $X$ constructed by identifying
faces of the orthoscheme complex of $NC_W$.  This complex splits as a
metric direct product of a complex $Y$ constructed from $\widetilde
A_{n-1}$ shapes and a circle of length $\sqrt{n}$, and the universal
cover $\widetilde X$ decomposes into columns as before.  In
particular, $\widetilde X$ is isometric with $\widetilde Y \times \R$,
the link of the unique vertex of $Y$ is isometric to the diagonal link
of the orthoscheme complex of $NC_W$, and we have the following result
that generalizes Proposition~\ref{prop:part-braid}.

\begin{prop}[Partitions and Artin groups]\label{prop:part-artin}
  Let $W$ be a finite Coxter group and let $NC_W$ be its lattice of
  noncrossing partitions.  If the orthoscheme complex of $NC_W$ is
  $\cat(0)$ then the finite-type Artin group corresponding to $W$ is a
  $\cat(0)$ group.
\end{prop}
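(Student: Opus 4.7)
The proof will follow the template established for the braid group case in Proposition~\ref{prop:part-braid} (and its verification in Proposition~\ref{prop:nc4}). The input is the hypothesis that $\order{NC_W}$ is $\cat(0)$; the tools are the direct-product decomposition $X \simeq Y \times S^1_{\sqrt{n}}$ together with the identification of the link of the unique vertex of $Y$ with the diagonal link $\lk(e_{\zero\one}, \order{NC_W})$.

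First, from $\order{NC_W}$ being $\cat(0)$ I deduce, via Proposition~\ref{prop:curv-lks}, that every cell link in $\order{NC_W}$ is $\cat(1)$; in particular the diagonal link $\lk(e_{\zero\one}, \order{NC_W})$ is $\cat(1)$. Since this is precisely the link of the single vertex of $Y$, and since a \pe complex with finite shapes is locally $\cat(0)$ as soon as each of its vertex links is $\cat(1)$, the complex $Y$ is locally $\cat(0)$. Next, every cell of the product $X \simeq Y \times S^1_{\sqrt{n}}$ has the form $\sigma \times \tau$, and its link in $X$ is the spherical join $\lk(\sigma, Y) \ast \lk(\tau, S^1_{\sqrt{n}})$. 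Both factors are $\cat(1)$ (the factor coming from $S^1_{\sqrt{n}}$ is always a pair of antipodal points or empty, since $S^1_{\sqrt{n}}$ is a $1$-dimensional \pe complex), so by Proposition~\ref{prop:cat1-joins} each such join is $\cat(1)$. Hence $X$ is locally $\cat(0)$.

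To conclude, I invoke Cartan--Hadamard. The complex $X$, built from finitely many cells of $\order{NC_W}$ by face identifications, is compact, and by the construction in \cite{Br01} and \cite{BrWa02} it is an Eilenberg--MacLane space for the finite-type Artin group associated with $W$. Its universal cover $\widetilde X \simeq \widetilde Y \times \R$ is simply connected, complete, and locally $\cat(0)$, so it is $\cat(0)$. The Artin group acts on $\widetilde X$ by deck transformations, which are isometries, and which act freely (since $X$ is aspherical), properly discontinuously, and cocompactly (since $X$ is compact). Therefore the Artin group is a $\cat(0)$ group. The only substantive ingredient outside the self-contained link machinery of the earlier sections is the metric splitting of $X$ and the identification of the vertex link of $Y$, both of which are cited as black boxes from \cite{Br01} and \cite{BrWa02}; once these are accepted, the curvature argument itself is a direct transcription of the braid-case proof.
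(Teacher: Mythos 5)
Your proposal is correct and follows essentially the same route as the paper, which likewise treats the metric splitting $X \cong Y \times S^1$ of length $\sqrt{n}$ and the identification of the unique vertex link of $Y$ with the diagonal link of $NC_W$ as black boxes from \cite{Br01} and \cite{BrWa02}, and then concludes via the fact that the fundamental group of a compact locally $\cat(0)$ \pe complex is a $\cat(0)$ group. One minor caveat: the splitting is metric rather than cellular (the paper explicitly notes it is not visible in the cell structure of $X$), so your cell-by-cell join computation in the product should be replaced by the standard observation that a metric product of a locally $\cat(0)$ complex with a circle is locally $\cat(0)$; likewise the fact that a $\cat(0)$ complex has $\cat(1)$ cell links follows from Definition~\ref{def:curv} together with Proposition~\ref{prop:lk-lk} rather than from Proposition~\ref{prop:curv-lks} --- neither point affects the validity of the argument.
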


When $W$ has a standard minimal generating set of size $4$, the poset
$NC_W$ has rank~$4$ and Theorem~\ref{main:poset} can be applied as
above.  Each of the five possible posets (corresponding to the finite
Coxeter groups of type $A_4$, $B_4$, $D_4$, $F_4$ and $H_4$) is known
to be a lattice and thus by Proposition~\ref{prop:short-spindles} we
only need to check whether or not $NC_W$ contain a global spindle of
girth~$6$ whose elements alternate between adjacent ranks.  Moreover,
because $NC_W$ is self-dual (\cite{Mc06}), it is sufficient to search
for the configuration on the lefthand side of Figure~\ref{fig:bad}.
The second author wrote a short program in GAP to construct these
posets and to search for this particular configuration. The
noncrossing posets of type $A_4$ and $B_4$ contain no such
configurations but the noncrossing posets of type $D_4$, $F_4$ and
$H_4$ do contain such configurations.  By Theorem~\ref{main:poset} and
Proposition~\ref{prop:part-artin}, this establishes the following.

\setcounter{mainthm}{1}
\begin{mainthm}[Artin groups]
  Let $K$ be the Eilenberg-MacLane space for a four-generator Artin
  group of finite type built from the corresponding poset of
  $W$-noncrossing partitions and endowed with the orthoscheme metric.
  When the group is of type $A_4$ or $B_4$, the complex $K$ is
  $\cat(0)$ and the group is a $\cat(0)$ group.  When the group is of
  type $D_4$, $F_4$ or $H_4$, the complex $K$ is not $\cat(0)$.
\end{mainthm}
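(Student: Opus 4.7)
The plan is to reduce the theorem to a finite combinatorial check on each of the five rank-$4$ noncrossing partition lattices $NC_W$ and then carry out that check by enumeration. First I would invoke Proposition~\ref{prop:part-artin}, which says that it suffices to decide whether the orthoscheme complex of $NC_W$ is $\cat(0)$; and Theorem~\ref{main:poset}, which turns this into the question of whether $NC_W$ contains a short spindle. Since each $NC_W$ in rank~$4$ is already known to be a lattice, Proposition~\ref{prop:short-spindles} tells us that any short spindle must be a global spindle of girth~$6$ whose elements alternate between two adjacent ranks, i.e.\ one of the two configurations shown in Figure~\ref{fig:bad}.

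Next I would use the self-duality of $NC_W$ established in \cite{Mc06}: an order-reversing bijection exchanges the two configurations of Figure~\ref{fig:bad}, so only the lefthand one need be searched for. Concretely, it is enough to ask whether there exist three rank-$3$ elements of $NC_W$ sharing a common rank-$2$ lower bound and having pairwise rank-$2$ meets but no rank-$3$ common upper bound cap. Equivalently, in terms of the absolute-length order on reflections used to construct $NC_W$, one asks whether three reflection-triples below the Coxeter element $\delta$ assemble into the required non-capped zig-zag.

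The third step is computational. For each $W \in \{A_4,B_4,D_4,F_4,H_4\}$, realize $NC_W$ as the interval $[e,\delta]$ in the Cayley graph of $W$ on its reflection set $T$, then enumerate candidate triples and test whether the capping element of rank~$3$ is missing. The groups have orders $120$, $384$, $192$, $1152$ and $14400$ respectively and their reflection sets are small, so a short GAP program finishes the search immediately. The outcome is that $NC_{A_4}$ and $NC_{B_4}$ contain no such configuration, while $NC_{D_4}$, $NC_{F_4}$ and $NC_{H_4}$ each contain one. Assembling: for $A_4$ and $B_4$, Theorem~\ref{main:poset} gives that the orthoscheme complex of $NC_W$ is $\cat(0)$, and Proposition~\ref{prop:part-artin} then yields $\cat(0)$-ness of $K$ and of the Artin group; for $D_4$, $F_4$ and $H_4$, the short spindle produced by the search yields a short local geodesic loop in the diagonal link of $NC_W$ via Theorem~\ref{thm:low-spindle}, and hence, using the splitting $\widetilde X \cong \widetilde Y \times \R$ together with the identification of the vertex link of $Y$ with the diagonal link of $NC_W$, one concludes that $K$ is not $\cat(0)$.

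The main obstacle is not mathematical but organizational: the computation itself is routine, but one must faithfully produce $NC_W$ in each case, which in types $F_4$ and $H_4$ involves non-crystallographic arithmetic, and one must make sure the test for the short-spindle configuration is exactly the one justified by Proposition~\ref{prop:short-spindles} and by self-duality. A secondary subtlety is verifying that the negative conclusion truly transfers from $NC_W$ to $K$: this uses that the diagonal link of $NC_W$ appears as a genuine vertex link in the splitting $\widetilde Y \times \R$, so that the failure of $\cat(1)$ in that link obstructs local $\cat(0)$-ness of $K$ itself, not merely of the orthoscheme complex before gluing.
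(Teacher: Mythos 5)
Your proposal follows essentially the same route as the paper: reduce via Proposition~\ref{prop:part-artin}, Theorem~\ref{main:poset}, Proposition~\ref{prop:short-spindles} and self-duality of $NC_W$ to a finite GAP search for the lefthand configuration of Figure~\ref{fig:bad}, and transfer the negative cases to $K$ through the identification of the vertex link of $Y$ with the diagonal link of $NC_W$, exactly as the paper does. The only blemish is your ``concrete'' restatement of the configuration (``three rank-$3$ elements sharing a common rank-$2$ lower bound\dots'') which is garbled---the target is three rank-$1$ elements with pairwise rank-$2$ joins but no rank-$3$ element above all three---but since you anchor the search to Figure~\ref{fig:bad} itself, the argument coincides with the paper's proof.
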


\bibliographystyle{plain}
\def\cprime{$'$} \def\cprime{$'$}

\end{document}